\renewcommand{\dateseparator}{-}
\renewcommand{\today}{\the\year \dateseparator \twodigit\month
\dateseparator \twodigit\day}
\title[Recursive Relations for Moduli of Parabolic Bundles]{Recursive Relations in the Cohomology Ring of Moduli Spaces of Rank 2 Parabolic Bundles on the Riemann Sphere} 
\author{Ethan J. Street}
\newtheorem{thm}{Theorem}[section]
\theoremstyle{plain}
\newtheorem{prop}[thm]{Proposition}
\newtheorem{lemma}[thm]{Lemma}
\newtheorem{cor}[thm]{Corollary}
\theoremstyle{definition}
\newtheorem{definition}[thm]{Definition}
\newtheorem{remark}[thm]{Remark}
\newtheorem{example}[thm]{Example}
\newtheorem*{nonumnotation}{Notation}
\numberwithin{equation}{section}
\numberwithin{thm}{section}
\newcommand{\abs}[1]{\left\lvert{#1}\right\rvert}
\newcommand{\norm}[1]{\left\|{#1}\right\|}
\newcommand{\oo}{\infty}
\newcommand{\eps}{\epsilon}
\newcommand{\CP}{\mathbb{C}P}
\newcommand{\su}{\mathfrak s \mathfrak u}
\newcommand{\so}{\mathfrak s \mathfrak o}
\newcommand{\Id}{\mathrm{Id}}
\newcommand{\Ad}{\mathrm{Ad}}
\newcommand{\ad}{\mathrm{ad}}
\newcommand{\nin}{\notin}
\newcommand{\into}{\hookrightarrow}
\renewcommand{\obar}{\overline}
\newcommand{\ubar}{\underline}
\newcommand{\wt}{\widetilde}
\newcommand{\wh}{\widehat}
\newcommand{\id}{\mathrm{id}}
\newcommand{\pair}[2]{\left\langle{#1},{#2}\right\rangle}
\newcommand{\floor}[1]{\left\lfloor{#1}\right\rfloor}
\newcommand{\PAR}{\mathrm{par}}
\newcommand{\point}{\mathrm{pt}}
\newcommand\orb{\mathrm{orb}}
\newcommand\PD{\mathrm{PD}}
\DeclareMathOperator{\Aut}{Aut}
\DeclareMathOperator{\hol}{hol}
\DeclareMathOperator{\End}{End}
\DeclareMathOperator{\Mod}{Mod}
\DeclareMathOperator{\trace}{Tr}
\DeclareMathOperator{\SO}{SO}
\DeclareMathOperator{\SU}{SU}
\DeclareMathOperator{\U}{U}
\DeclareMathOperator{\PU}{PU}
\DeclareMathOperator{\diag}{diag}
\DeclareMathOperator{\sech}{sech}
\DeclareMathOperator{\Tr}{Tr}
\DeclareMathOperator{\rank}{rk}
\DeclareMathOperator{\Pic}{Pic}
\newcommand{\cB}{{\mathcal B}}
\newcommand{\CC}{{\mathbb C}}
\newcommand{\QQ}{{\mathbb Q}}
\newcommand{\RR}{{\mathbb R}}
\newcommand{\ZZ}{{\mathbb Z}}
\begin{document}

\maketitle

\centerline{\bf Abstract}

\vspace{0.3in}

We study the singular cohomology of the moduli space of rank 2 parabolic bundles on a Riemann surface where the weights are all 1/4. We give a formula, based on work of Boden, for the Poincar\'e polynomial of this moduli space in general, and deduce a complete set of relations in the cohomology ring for the case when the genus is zero. These relations are recursive in the number of parabolic points (which must be odd), and are analogous to the relations appearing in the cohomology ring for the non-parabolic story, which are recursive in the genus. Our methods use techniques from a paper of Weitsman to reduce to a linear algebra problem, which we then solve using the theory of orthogonal polynomials and a continued fraction expansion for the generating function of the Euler numbers.
\section{Introduction}

Given a compact Riemann surface $\Sigma$ of genus $g$, it is well known that the moduli space of stable, rank 2 holomorphic bundles $\mathcal{E}$ with given fixed odd degree determinant line bundle $L$ is a smooth compact manifold $\mathcal{M}^0_g(2,1)$ of real dimension $6g-6$ \cite{AB}. It is naturally a K\"ahler manifold, in fact a rational projective variety over $\CC$, and has been studied extensively. This space can also be studied from the point of view of representations of the fundamental group of $\Sigma$ into the group $\SU(2)$. Its cohomology ring has been studied in great detail and presentations for it have been given by several authors \cite{king_newstead, siebert_tian}. In this paper we are interested in the analgous problem for holomorphic vector bundles on an orbifold surface $\check\Sigma$. The relevant object of study is the moduli space of vector bundles over $\Sigma$ with parabolic structure on a collection of points. There is a natural notion of stability for such bundles and, under suitable genericity assumptions on the weights, a corresponding smooth, compact moduli space. We compute the cohomology ring of this moduli space for a specific symmetric choice of weights and in the case of rank 2 bundles over the Riemann sphere.

For a surface of arbitrary genus $g$ with $n$ parabolic points and arbitary weights, this moduli space can be quite a bit more complicated than $\mathcal{M}^0_g(2,1)$. Our approach will be to use the classical theorem of Mehta and Seshadri \cite{mehta_seshadri} that it is isomorphic to a suitable moduli space of representations of the orbifold fundamental group $\pi^\orb_1(\check\Sigma)$. Part of our motivation in the work here is to understand a question in instanton Floer homology, and the setup of that theory necessitates that we restrict attention here to the case that all the parabolic weights are $1/4$ and $n$ is odd. In this case, the problem becomes more tractable. We denote the moduli space of parabolic bundles (with fixed determinant) in this special case $\mathcal{M}^0_{g,n}$. The theorem of Mehta and Seshadri shows that $\mathcal{M}^0_{g,n}$ is isomorphic to a certain representation variety $\mathcal{R}_{g,n}$ of $\pi^\orb_1(\check\Sigma)$ into $\SU(2)$. We shall prove the following result for the genus 0 case:

\begin{thm} \label{thm_cohom_ring_genus_0}
Let $n=2m+1\geq 3$. The cohomology ring $H^*(\mathcal{R}_{0,n};\QQ)$ is zero in odd dimensions, and is generated as a $\QQ$-algebra by the class $\alpha$ of twice the natural symplectic form on $\mathcal{R}_{0,n}$ and degree two classes $\delta_1,\ldots,\delta_n$ corresponding to the parabolic points, satisfying $\delta_j^2=\delta_k^2$ for all $j,k$. Let $n=2m+1$, define $\beta:=\delta_k^2\in H^4(\mathcal{M}_{0,n})$ and for each $m\geq 0$ define the polynomial $r_{0,2m+1}(\alpha,\beta)$ for $n\geq 3$ recursively via the relation:
$$r_{0,2m+3}=\alpha\cdot r_{0,2m+1}-m^2\beta\cdot r_{0,2m-1}$$
with $r_{0,1}=1$, $r_{0,3}=\alpha$. Then for each subset $J\subset\{1,\ldots,n\}$ with $\abs{J}=s\leq m$, the polynomial
$$R_{0,2m+1}^J=r_{0,2m-2s+1}\cdot\prod_{k\in J}\delta_k$$
is a relation in $H^*(\mathcal{R}_{0,n};\QQ)$. This set of relations, along with $\delta_k^2=\delta_j^2$ is complete.
\end{thm}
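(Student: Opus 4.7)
The plan is to carry out the proof in three stages: first, establish the generating set and vanishing of odd cohomology; second, verify that the stated relations hold; and third, the hardest, show that no further relations are needed via a Hilbert series computation driven by the orthogonal polynomial structure of the family $r_{0,2m+1}$.

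For the first stage, I would realize $\mathcal{R}_{0,n}$ as the symplectic reduction of $\prod_{k=1}^n \mathcal{O}_k$ by the diagonal conjugation action of $\SU(2)$, where each $\mathcal{O}_k \cong S^2$ is the coadjoint orbit corresponding to weight $1/4$ at the $k$-th parabolic point. In this presentation $\alpha$ is twice the induced symplectic class and each $\delta_k$ is the Kirwan pullback of the positive generator of $H^2(\mathcal{O}_k;\QQ)$. Kirwan surjectivity then yields that these classes generate $H^*(\mathcal{R}_{0,n};\QQ)$, and a standard Morse-theoretic analysis (e.g.\ of the norm-squared of the moment map, whose critical sets have even index in the generic-weight setting) gives the vanishing in odd degrees. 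The Poincar\'e polynomial of $\mathcal{R}_{0,n}$ needed later is the one derived from Boden's formula earlier in this paper.

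For the second stage, the equality $\delta_j^2 = \delta_k^2$ follows because the $\SU(2)$-equivariant Euler class of each orbit $\mathcal{O}_k$ depends only on the weight and not on the index, so its Kirwan pullback is a single class $\beta \in H^4(\mathcal{R}_{0,n};\QQ)$ with $\delta_k^2 = \beta$ for every $k$. To produce the recursive relations $R^J_{0,2m+1}$ I would follow Weitsman's technique: for each $J$ of size $s \leq m$, use a natural map relating $H^*(\mathcal{R}_{0,2m+1})$ to the cohomology of the smaller moduli space $\mathcal{R}_{0,2(m-s)+1}$, combined with a dimension-based vanishing argument, to force $r_{0,2m-2s+1}(\alpha,\beta) \cdot \delta_J = 0$ in cohomology. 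The three-term recurrence $r_{0,2m+3} = \alpha \cdot r_{0,2m+1} - m^2 \beta \cdot r_{0,2m-1}$ itself arises from a geometric operation merging two parabolic points, with the coefficient $m^2$ encoding a topological Chern number at the merging step.

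The main obstacle is completeness. Let $Q_n$ denote the quotient of $\QQ[\alpha, \delta_1, \ldots, \delta_n]$ by the relations stated in the theorem; surjectivity of $Q_n \twoheadrightarrow H^*(\mathcal{R}_{0,n};\QQ)$ is immediate from the previous steps, so the task is to match graded dimensions. Reducing modulo $\delta_j^2 = \beta$, every element of $Q_n$ is a $\QQ[\alpha,\beta]$-linear combination of squarefree monomials $\delta_J$; for $|J| = s \leq m$, the cyclic $\QQ[\alpha,\beta]$-summand generated by $\delta_J$ is annihilated by $r_{0,2m-2s+1}(\alpha,\beta)$, and the extreme case $s = m$ forces $\delta_J = 0$ (since $r_{0,1} = 1$), which propagates to $\delta_K = 0$ for every $|K| > m$. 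The crucial observation is that the three-term recurrence with coefficient $m^2 \beta$ is precisely the recurrence for an orthogonal polynomial family whose associated Jacobi continued fraction is the classical generating function for the Euler secant numbers. Using this structure I would compute the Hilbert series of $Q_n$ in closed form and verify term-by-term that it matches Boden's Poincar\'e polynomial of $\mathcal{R}_{0,n}$; the resulting combinatorial identity, where the continued fraction does the essential work, is the heart of the argument.
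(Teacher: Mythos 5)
Your overall architecture (generators, relations obtained by restricting to submanifolds that look like smaller moduli spaces, completeness by a graded dimension count against Boden's Poincar\'e polynomial) matches the paper, but two of your three stages have genuine gaps. First, Stage 1: $\mathcal{R}_{0,n}$ is the quotient of $\{(T_1,\dots,T_n):\Tr T_k=0,\ \prod T_k=-1\}$, a \emph{multiplicative} constraint at the central element $-1$, not the additive closing condition of a symplectic reduction of $\prod_k\mathcal{O}_k$ at $0$, and your proposed source of generators is provably too small regardless of how the identification is set up: $H^2_{\SU(2)}\bigl((S^2)^n;\QQ\bigr)$ has rank $n$, while Corollary \ref{cor_rank_degree_2} gives $\dim H^2(\mathcal{R}_{0,n};\QQ)=n+1$. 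So no Kirwan-type map from that equivariant ring can be surjective, and in particular it cannot produce $\alpha$, which is linearly independent of the $\delta_k$ (Proposition \ref{symp_class_basis}); in your picture the induced symplectic class would necessarily lie in the span of the $\delta_k$, which is false. The paper instead gets generation from the universal-pair classes $p_1(\mathbf{E}^\ad)/h$ and $c_1(\mathbf{V}_k)$ of Theorem \ref{thm_generating_set}, with the slant product against $[\Sigma]$ supplying the extra degree-two generator, and gets odd-degree vanishing from Boden's formula rather than from a Morse theory of $\|\mu\|^2$ that is not available in your setup.

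Second, and more seriously, Stage 2 never actually establishes that the specific polynomial $r_{0,2m-2s+1}(\alpha,\beta)$ vanishes. A dimension or Poincar\'e-duality count only shows that \emph{some} relation in $\alpha,\beta$ of degree $2(m-s)$ exists (and for $m-s\geq 3$ this degree is well below $\dim_\RR\mathcal{R}_{0,2(m-s)+1}=4(m-s)-4$, so nothing vanishes for dimension reasons); identifying \emph{which} polynomial vanishes is the heart of the theorem. The paper does this by computing every top pairing $\langle\wh\alpha^r\wh\beta^s,\mathcal{R}_{0,n}\rangle$ from Weitsman's symplectic volume formula (Corollary \ref{cor_symp_volume}), obtaining Euler numbers $\abs{E_{r}}$, so that the coefficient vector of the relation is the kernel of a Hankel matrix of Euler numbers; the three-term recurrence with coefficient $m^2$ then falls out of the continued fraction (\ref{euler_gen_func}) for $\sum\abs{E_{2k}}z^{2k}$ via the theory of orthogonal polynomials (Proposition \ref{prop_genus_0_first_rel} and the appendix). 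Your claim that the $m^2$ is ``a topological Chern number at the merging step'' is unsubstantiated and is not how the coefficient arises. You have placed the orthogonal-polynomial/continued-fraction input in the completeness stage, where it is in fact not needed (completeness uses only that $r_{0,2k+1}$ is monic in $\alpha$, the Betti-number match up to the middle dimension, and Poincar\'e duality, as in Lemma \ref{lem_genus_0_basis} and Proposition \ref{prop_genus_0_complete}); the place it is indispensable is in proving that the relations hold at all. Your step multiplying the base relation by $\delta_J$ is correct and matches Corollary \ref{cor_smaller_relations}.
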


We shall find it necessary to consider both points of view of the space $\mathcal{R}_{g,n}$, as a space of parabolic bundles, and a space of representations. As a go between, we will also give a description of this moduli space in terms of flat connections, where the natural symplectic form on it is easiest to describe.

To get a handle on the topology of $\mathcal{R}_{g,n}$, we begin by determining its Betti numbers. Methods for computing the Poincar\'e polynomials of moduli spaces of parabolic bundles have been described by several authors. Hans Boden, in his thesis \cite{boden_parabolic}, provides a comprehensive account of an adaptation of the equivariant Morse theory method from \cite{AB} to the parabolic case that achieves this. We leverage a non-closed form formula from that exposition to read off the Poincar\'e polynomial of our $\mathcal{R}_{g,n}$.

In order to determine generators and relations for the cohomology ring, we adapt the method in \cite{weitsman_cohom}, which can be summarized as follows. Lying over $\mathcal{R}_{g,n}$, there is a natural $\U(1)$ bundle coming from each of the marked points. The first Chern classes of these bundles, along with the classical generators described by Atiyah and Bott fully generate the cohomology ring. These bundles are trivial outside certain real codimension two submanifolds given by representations where two generators are mapped to equal or opposite elements of $\SU(2)$. This allows us to identify representatives for the Poincar\'e duals of these generators. Each of these submanifolds behaves like a 2-sphere bundle over a copy of the moduli space for two fewer parabolic points. We use this fact to analyze the intersection numbers of the corresponding homology classes recursively in the number $n$ of parabolic points. We can now reduce the problem to linear algebra in genus 0. Remarkably, we are able to solve this linear algebra problem using techniques from the theory of orthogonal polynomials and an identity involving the so called \emph{Euler numbers}.

The results of this paper are an analogue of the results of \cite{siebert_tian} for parabolic bundles. Part of the motivation for this investigation was to develop a similar analogue for the results of Mu\~noz in \cite{munoz_cohom} for singular instanton Floer homology. This work has been completed, using the results proved here, and will appear in a forthcoming paper \cite{street_floer}.

\section{Preliminaries}

In this section we define the moduli spaces whose cohomology we are interested in and record some of the basic facts known about them. Specifically, we give the three points of view - flat connections, representations, and parabolic stable bundles - we shall need, and describe a canonical set of generators for the cohomology ring. Boden's thesis \cite{boden_parabolic} gives an excellent introduction to the various points of view of the moduli spaces we are interested in. Our treatment of parabolic bundles comes from the discussion there, while our treatment of flat connections is modelled on the presentation in \cite{weitsman_cohom}.

\subsection{Moduli Spaces and the Symplectic Structure} \label{subsec_mod_space}

We first describe the three moduli spaces and explain briefly the isomorphisms between them. We then define the symplectic structure on the moduli space of flat connections.

\vspace{12pt}

\def\GC{\mathcal{G}^\text{c}}
\def\GCp{\mathcal{G}^\text{c}_\text{par}}
\def\GCr{\obar{\mathcal{G}}\vphantom{X}^\text{c}}
\def\GCpr{\obar{\mathcal{G}}\vphantom{X}^\text{c}_\text{par}}
\def\gCr{\obar{G}\vphantom{X}^\text{c}}
\def\Cs{\mathcal{C}^\mathrm{s}}

\noindent\textbf{Stable Parabolic Bundles.} Our starting point is a compact Riemann surface $\Sigma$ of genus $g$ with $n$ marked points $x_1,\ldots,x_n$.
\begin{definition} \label{def_par_bundle}
A \emph{parabolic} vector bundle $\mathcal{E}$ on $\Sigma$ with respect to the marked points is holomorphic vector bundle along with the data of a descending filtration at each $x_k$:
$$E_{x_k}= F^{(0)}_k\supset F^{(0)}_k\supset\dots\supset F^{(s_k)}_k$$
and weights $t^{(0)}_k<\ldots<t^{(s_k)}_k$ in $[0,1]$.
\end{definition}
There are notions of degree and stability for parabolic bundles. For a parabolic bundle $\mathcal{E}\to\Sigma$, set
$$d_k^{(j)}=\rank(F^{(j)}_k/F^{(j+1)}_k).$$
\begin{definition} \label{def_par_degree}
The \emph{parabolic degree} of $\mathcal{E}$ is the quantity
$$\deg^\PAR(\mathcal{E}):=\deg(\mathcal{E})+\sum_{k=1}^n\sum_{j=0}^{s_k}d_k^{(j)}t_k^{(j)}.$$
The \emph{slope} of $\mathcal{E}$ is the number
$$\mu^\PAR(\mathcal{E}):=\deg^\PAR(\mathcal{E})/\rank(\mathcal{E}).$$
\end{definition}
There are natural notions of sub- and quotient parabolic bundles (for more on definitions and facts regarding parabolic bundles and stability, we refer the reader to \S3 of \cite{boden_parabolic}).
\begin{definition} \label{def_par_stable}
The parabolic bundle $\mathcal{E}$ is called \emph{stable} if for any proper parabolic subbundle $\mathcal{F}$, one has $\mu^\PAR(\mathcal{F})<\mu^\PAR(\mathcal{E})$.
\end{definition}
For a suitable (that is, generic) fixed collection of weights, there is a smooth, compact moduli space of stable parabolic bundles. In this paper, we are concerned only with the case when $\mathcal{E}$ is rank two, and it will be convenient to assume that $\deg(\mathcal{E})=1$. There are then only two weights $t^{(0)}_k$ and $t^{(1)}_k$ at each marked point, and the filtration consists of a choice of line $F_k\subset\mathcal{E}_{x_k}$. We also make the additional assumption that the sum $t^{(0)}_k+t^{(1)}_k=1$, and set $t_k=t_k^{(0)}$. Denote the moduli space in this case by
$$\mathcal{M}_{g,n}(t_1,\ldots,t_n).$$
When the weights are understood, we will simply write $\mathcal{M}_{g,n}$. The space $\mathcal{M}_{g,n}$ admits a determinant map $\det$ to the space $\Pic^1(\Sigma)$ of isomorphism classes of degree one line bundles, which is a torsor for the Jacobian variety $J(\Sigma)$. Fixing a line bundle $\mathcal{L}\in\Pic^1(\Sigma)$, denote the fiber of $\det$ over $\mathcal{L}$ by $\mathcal{M}^0_{g,n}$ (the moduli space with fixed determinant). The Jacobian $J(\Sigma)$ also acts on $\mathcal{M}_{g,n}$ by tensor product and restricting this action gives a map
$$\mathcal{M}^0_{g,n}\times J(\Sigma)\to\mathcal{M}_{g,n}$$
which is a (connected, so nontrivial) degree $4^g$ covering.

\vspace{12pt}

\noindent\textbf{Representation Varieties.} Denote by $\Sigma^*$ the noncompact surface obtained by removing the $x_k$'s. Let $a_1,\ldots,a_{2g}$ be standard set of loops in $\Sigma$ generating the fundamental group so that their homology classes are a symplectic basis for $H_1(\Sigma)$ (that is, $a_j$ pairs with $a_{j+1}$) and for each removed point $x_k$ let $d_k$ denote a simple loop going once around the puncture. To get a relationship between representations of the fundamental group of $\Sigma^*$ and parabolic bundles with an \emph{odd} first Chern class, we look at the $\ZZ/2$ central extension $\wh\Gamma$ of the fundamental group $\Gamma=\pi_1(\Sigma^*)$ which has an extra order two central generator $\zeta$ and is determined by the single relation
$$\prod_{j=1}^g[a_{2j-1},a_2j]\cdot\prod_{k=1}^nd_k=\zeta$$
We consider a space of representations of $\wh\Gamma$ into $\SU(2)$. We bring the parabolic weights $t_k$ into the picture by only allowing representations to send the generator $d_k$ to an element of trace $2\cos(2\pi t_k)$. Let $\ubar t$ denote the weight vector $(t_1,\ldots,t_k)$

\begin{definition} \label{def_rep_variety}
The \emph{representation variety} $\mathcal{R}^\mathrm{odd}_{g,n}(\ubar t)$ for the weight vector $\ubar t$ is the quotient of the space of representations of
$$\rho:\wh\Gamma\to\SU(2)$$
such that $\Tr(\rho(d_k))=2\cos(2\pi t_k)$ and $\rho(\zeta)=-1$ under the action of conjugation by $\SU(2)$ on the target.
\end{definition}

Denote by $\wt{\mathcal{R}}^\text{odd}_{g,n}(\ubar t)$ space of representations before quotienting by conjugation. The action of $\SU(2)$ descends to one of $\PU(2)=\SU(2)/\{\pm 1\}$. This action is free and the quotient  $\mathcal{R}^\text{odd}_{g,n}(\ubar t)$ is a smooth, compact manifold only as long as the $t_k$'s are generic: for any collection of signs $\eps_k=\pm 1$, no sum $\Sigma_{k=1}^n\eps_k t_k$ may be an integer. This condition is exactly the one that precludes \emph{reducible} representations. Let $\ubar t$ denote the weight vector $(t_1,\ldots,t_k)$. We have the following important correspondence:

\begin{thm} \label{equiv_mod_spaces}
(Mehta, Seshadri, \cite{mehta_seshadri}) For a generic collection of weights $\ubar t$, there is a diffeomorphism 
$$\mathcal{M}^0_{g,n}(\ubar t)\cong\mathcal{R}^\text{odd}_{g,n}(\ubar t).$$
\end{thm}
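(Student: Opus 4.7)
The plan is to construct mutually inverse maps between the two moduli spaces and verify that they assemble into a smooth bijection. Since this is the classical theorem of Mehta and Seshadri, my sketch follows the conceptual outline of their original argument, which passes through the intermediate notion of a flat unitary connection on the punctured surface $\Sigma^*$ with prescribed monodromy near each puncture.

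For the direction from representations to parabolic bundles, suppose $\rho\colon\wh\Gamma\to\SU(2)$ satisfies the stated constraints. Restricted to $\Gamma$, the composition $\Gamma\to\SU(2)\to\PU(2)$ is a homomorphism (since $\rho(\zeta)=-1$ projects to the identity), giving a flat $\PU(2)$-bundle on $\Sigma^*$; the original lift to $\SU(2)$ provides a flat $\SU(2)$-bundle on $\Sigma^*$ itself. The trace condition forces $\rho(d_k)$ to be conjugate to $\diag(e^{2\pi i t_k},e^{-2\pi i t_k})$, and following Deligne one extends the flat bundle to a holomorphic bundle on $\Sigma$ by the canonical extension determined by this monodromy. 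The eigenspaces of $\rho(d_k)$ produce a natural filtration on the fiber at $x_k$, endowing the extension with a parabolic structure of weights $t_k$ and $1-t_k$. The condition $\rho(\zeta)=-1$ forces the resulting bundle to have odd first Chern class, matching $w_2$ of the associated $\PU(2)$-bundle with the mod $2$ reduction of $c_1$, and hence is consistent with the normalization $\deg(\mathcal{E})=1$.

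For the reverse direction, given a stable parabolic bundle $\mathcal{E}$, one invokes the parabolic analogue of the Narasimhan--Seshadri theorem: there exists a Hermitian metric on the restriction of $\mathcal{E}$ to $\Sigma^*$, unique up to isomorphism, whose Chern connection is projectively flat and whose singular behavior near each $x_k$ is adapted to the parabolic structure. The holonomy of this connection defines a representation of $\Gamma$ into $\PU(2)$; keeping track of canonical $\SU(2)$ lifts of each generator (determined by the parabolic weights and the degree) promotes this to a representation of $\wh\Gamma$ satisfying our constraints. Parabolic stability corresponds to irreducibility modulo the center, placing the resulting representation in the smooth locus where $\PU(2)$ acts freely.

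The main obstacle is establishing existence of the parabolic Hermitian--Einstein metric with the correct singular behavior at each puncture: this is a nonlinear PDE problem, and verifying the precise asymptotics needed to match the prescribed weights requires delicate analysis of the behavior of the metric near the marked points. Once this is in hand, the two constructions are mutually inverse essentially tautologically via the intermediate flat-connection picture, and smoothness in families --- promoting the set-theoretic bijection to a diffeomorphism --- follows from elliptic regularity together with the smoothness of both moduli spaces under the genericity assumption on $\ubar t$.
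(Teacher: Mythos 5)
The paper does not prove this statement: it is quoted verbatim as the classical theorem of Mehta and Seshadri, with the citation \cite{mehta_seshadri} standing in for the proof. So the relevant comparison is not with an argument in the paper but with the literature, and against that standard your proposal is an accurate \emph{outline} of the correspondence rather than a proof. The entire mathematical content of the theorem lives in the step you explicitly defer --- existence and uniqueness (up to gauge) of the flat unitary connection with prescribed conjugacy class of monodromy at each puncture adapted to a given stable parabolic structure, or equivalently the singular Hermitian--Einstein metric with the correct asymptotics at the $x_k$. Everything else in your sketch (eigenspace filtrations, canonical extensions, holonomy) is formal bookkeeping around that core. A proof attempt that names the hard step and sets it aside has not closed the gap; it has located it.

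Two further points are looser than they should be. First, $\wh\Gamma$ is a central extension of $\Gamma=\pi_1(\Sigma^*)$, not a group containing $\Gamma$, so ``restricted to $\Gamma$'' is not literally meaningful; what you want is that the composite $\wh\Gamma\to\SU(2)\to\PU(2)$ kills $\zeta$ and hence descends to $\Gamma$, while the $\SU(2)$ data of $\rho$ itself records the obstruction ($w_2$, equivalently the parity of $c_1$) to lifting that flat $\PU(2)$ bundle globally --- this is exactly the care the paper takes in its flat-connection discussion, where one works with $\Ad E$ and a restricted gauge group precisely because $E$ with $c_1(E)=1$ admits no $\SU(2)$ reduction. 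Second, ``mutually inverse essentially tautologically'' conceals the uniqueness half of the analytic input, and the upgrade from bijection to diffeomorphism is not automatic from elliptic regularity alone: one must match the deformation theories (parabolic hypercohomology on one side, group cohomology of $\wh\Gamma$ on the other), which is where the genericity of $\ubar t$ --- ruling out reducibles so that both spaces are smooth of the same dimension --- actually enters.
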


The topology of these moduli spaces depends on the weights $t_k$. It is known (see \cite{boden_variations}, for example) that the genericity condition splits the space of weight vectors $\ubar t$ into chambers separated by codimension one walls, and that as the weight vector passes through a wall, the moduli space undergoes a well-understood monoidal transformation consisting of a blow up and blow down along a submanifold. The geometric application (singular rank 2 instanton Floer homology) we have in mind requires us to fix $t_k=1/4$ for all $k$. In this case we denote the representation space
$$\mathcal{R}_{g,n}:=\mathcal{R}^\mathrm{odd}_{g,n}(1/4,\ldots,1/4).$$
The space $\mathcal{R}_{g,n}$ is a quotient of the space $\wt{\mathcal{R}}_{g,n}$ consisting of $2g+n$-tuples $$(S_1,\ldots,S_{2g},T_1,\ldots,T_n)$$ of elements of $\SU(2)$ satisfying
$$\prod_{j=1}^g[S_{2j-1},S_{2j}]\cdot\prod_{k=1}^n T_k=-1.$$
and such that $\Tr(T_k)=0$, under the action of $\SU(2)$ by conjugation. The quotient is a smooth, compact manifold of real dimension $6g-6+2n$, as long as $n$ is an odd number $2m+1$, which we henceforth assume. For example, $\mathcal{R}_{0,3}$ consists of a single point: any representation may be conjugated to one sending the three generators to $\mathbf{i},\mathbf{j},\mathbf{k}\in\SU(2)$.

\begin{nonumnotation}
It will be convenient later onto use the notation $\ubar S$ and $\ubar T$ for the $2g$ and $n$ tuples of elements $(S_1,\ldots,S_{2g})$ and $(T_1,\ldots,T_n)$ of $\SU(2)$. Denote the equivalence class in $\mathcal{R}_{g,n}$ of a point $(S_1,\ldots,S_{2g},T_1,\ldots,T_n)$ in $\wt{\mathcal{R}}_{g,n}$ by $[S_1,\ldots,S_{2g},T_1,\ldots,T_n]$ or simply $[\ubar S,\ubar T]$.
\end{nonumnotation}

\vspace{12pt}

\noindent\textbf{Flat Connections.} The fact that isomorphism classes of flat connections and representations of the fundamental group are in bijective correspondence is an old idea in topology. In the presence of the parabolic points, the discussion of this correspondence becomes somewhat awkward and necessitates a careful discussion of exactly what kinds of connections we allow, which we now undertake. We essentially copy the setup for this from \cite{weitsman_toric_I}. In what follows, all maps and forms are $C^\oo$.

Recall that we have fixed a smooth vector bundle $E$ on $\Sigma$ with $c_1(E)=1$. This removes the possibility for reducible connections in the case that there are no marked points. We fix a hermitian inner product $\langle\cdot,\cdot\rangle$ on $E$, and it will be useful to introduce the corresponding principal $\U(2)$ bundle $P$. Denote by $\Ad P$ is the associated principal $\PU(2)$ bundle and $\Ad E$ the associated rank three real vector bundle via the isomorphism $\PU(2)\to\SO(3)$. The bundle $\Ad E$ sits naturally inside $\End E$ as the skew hermitian trace-free subspace. On the noncompact surface $\Sigma^*$, for each $k$ we identify a neighborhood $U_k$ of the $k$th puncture with the cylinder $(0,1)\times S^1$. This gives coordinates $(s_k,\theta_k)$ on $U_k$, with $\theta_k\in[0,2\pi]$. We want to study $\SU(2)$ connections on $E$, but $c_1(E)\neq 0$ so $E$ cannot be made to have structure group $\SU(2)$. We therefore use the usual technique of studying connections on $\Ad E$, but use a subgroup of the full gauge group of $\Ad E$ consisting of those automorphisms which are induced by determinant 1 gauge transformations on $E$. For each $k$, we fix a trivialization of $E$ over $U_k$ such that for $1\leq k\leq n$ the preferred line $F_k$ is spanned by the vector $(1,0)\in\CC^2$. With respect to these trivializations, connections on $\Ad E$ over can be identified with 1-forms with values in $\so(3)$. We denote by $\mathcal{A}$ the following space of connections on $\Sigma^*$ which are ``standard'' near the punctures:
\begin{equation} \label{conn_space}
\begin{split}
\mathcal{A}=&\Big\{A\text{ a }C^\oo,\;\SO(3)\text{ connection on }\Ad E\text{ such that: } \\
&\left.A(s_k,\theta_k)=\tfrac{1}{4}\ad\left(\begin{smallmatrix}i&0\\0&i\end{smallmatrix}\right)d\theta_k=\tfrac{1}{2}\left(\begin{smallmatrix}0&0&0\\0&0&-1\\0&1&0\end{smallmatrix}\right)d\theta_k\text{ for }s_k\geq 1/2\text{, }1\leq k\leq n\right\}
\end{split}
\end{equation}
These connections are fixed on the ends of $\Sigma$ in order to ensure that the corresponding connection has fixed holonomy around the punctures:
\begin{equation} \label{adj_hol}
\hol_{\partial U_k}(A)=\exp\left(2\pi\cdot\frac{1}{2}\left(\begin{smallmatrix}0&0&0\\0&0&-1\\0&1&0\end{smallmatrix}\right)\right)=
\left(\begin{smallmatrix}1&0&0\\0&-1&0\\0&0&-1\end{smallmatrix}\right)\in\SO(3).
\end{equation}
At the puncture $x_k$, the line $F_k\subset E_{x_k}$ picks out a real 2-dimensional subspace $H_k$ of $(\Ad E)_{x_k}$ corresponding to endomorphisms $h$ for which $h(F_k)\subset F_k^\perp$, which is actually a complex line by precomposing with complex scalars. Our space of connections is rigged so that the holonomy around $x_k$ is exactly the element which is the identity on the real line $\ell_k:=H_k^\perp$ and $-1$ on the plane $H_k$.

Given a unitary automorphism $g$ of $E$, there is an induced automorphism $\Ad g$ on $\Ad E$ which arises by conjugation by $g$. We define the gauge group $\mathcal{G}$ to be the space of smooth determinant 1 sections of $\Aut(E)$ of unitary automorphisms of $E$, and restrict to those automorphisms whose action preserves the end behavior of our 1-forms. Namely, we define
\begin{equation} \label{gauge_G}
\mathcal{G}=\left\{\begin{matrix}g\in\Aut(E)\text{: }\det(g)=1\text{, and }\forall\; k\text{, there is }z\in\CC\text{, }\abs{w}=1 \\
\text{ such that }g(s_k,\theta_k)=\left(\begin{smallmatrix}w&0\\0&-w\end{smallmatrix}\right)\text{, for }s_k\geq 1/2 \end{matrix} \right\}
\end{equation}
where we use the trivialization near each each puncture to identify $g$ with an $\SU(2)$-valued function. An element $g$ acts by pulling back connections via the automorphism $\Ad g$ on $\Ad E$. In other words a section $t$ of $\Ad E$ is $(g\cdot A)$-parallel if and only if $\Ad(g)\cdot t=g\circ t\circ g^{-1}$ is $A$ parallel. The gauge group is rigged so that $\Ad g$ preserves the orthogonal decomposition $H_k\oplus\ell_k$. We will think of $\mathcal{G}$ as acting on the bundle $\Ad E$ over the entire surface $\Sigma$, while $\mathcal{A}$ consists of connections only over $\Sigma^*$.

Denote by
$$\mathcal{A}_\text{flat}\subset\mathcal{A}$$
denote the subset of flat connections. It is not difficult to see that any smooth flat $\SO(3)$ connection on $\Sigma^*$ with the proper holonomy around the punctures will be gauge equivalent to one in $\mathcal{A}_\text{flat}$. Denote by $\mathcal{B}_\text{flat}$ the quotient $\mathcal{A}_\text{flat}/\mathcal{G}$. It is a standard fact that there is a diffeomorphism
$$\mathcal{B}_\text{flat}\cong\mathcal{R}_{g,n}.$$
It will be useful to understand this later when we begin analyzing the symplectic form in more detail. Fix a basepoint $z\in\Sigma$ away from the punctures and trivialize $E$ outside $z_0$. This gives a reduction of the structure group of $P|_{\Sigma\setminus\{z_0\}}$ to $\SU(2)$. With respect to the trivialization, a connection on $\Ad E$ over $\Sigma^*\setminus\{z_0\}$ becomes an $\so(3)$ valued 1-form $a$. An $\SU(2)$ connection $B$ on $E$ becomes an $\su(2)$ valued 1-form $b$, and the induced connection $\Ad B$ on $\Ad E$ has 1-form given by $\ad b$. The Lie algebra homomorphism $\ad:\su(2)\to\so(3)$ is an isomorphism so there is a unique $b$ for which $\ad b=a$, and $B$ is flat if and only if $A$ is. Letting $z$ be a new basepoint near $z_0$, the holonomy $\hol_z(A)$ gives a representation of $\pi_1(\Sigma^*\setminus\{z_0\})$ into $\SO(3)$ and $\hol_z(B)$ gives one to $\SU(2)$ which lifts the homomorphism $\hol_z(A)$. The holonomy of $B$ around a small loop $\gamma_0$ based at $z$ around $z_0$ must be $-1$ since the bundle doesn't extend across $z_0$ but this holonomy must lift the holonomy of $A$, which is the identity in $\SO(3)$. Hence, from a flat connection $A$ we get a representation of $\wh\Gamma\cong\pi_1(\Sigma^*\setminus\{z_0\})/\langle \gamma_{z_0}^2\rangle$ into $\SU(2)$ given by the holonomy of the lift $B$. What remains is to check that the holonomy of $B$ around the punctures has trace 0. To see this, we simply note that we are free to fix our trivialization of $E$ away from $z_0$ in a way agreeing with the fixed trivializations already chosen near each punctures. With respect to this, the 1-form of $A$ near the punctures has been fixed so that the holonomy around a small loop is the $3\times 3$ matrix in (\ref{adj_hol}). The two $\SU(2)$ lifts of this matrix are $\pm \mathbf{i}$, which have trace 0. Now, the action of the gauge group serves to conjugate the representation $\hol_z(B)$, hence we get the desired map $\mathcal{B}_\text{flat}\to\mathcal{R}_{g,n}$. For the rest of the paper, we fix the point $z_0$ and trivialization of $E$ on $\Sigma\setminus\{z_0\}$, writing $\su(A)$ for the unique $\SU(2)$ connection with $\Ad(\su(A))=A$ away from $z_0$ (which we were calling $B$).

\vspace{12pt}

\noindent\textbf{The Symplectic Structure.} The tangent space to a point $A\in\mathcal{A}$ is naturally identified with the space of 1-forms $a$ with values in the bundle $\so(\Ad E)\subset\End(\Ad E)$ of skew-adjoint endomorphisms, which are zero near the punctures. The subspace corresponding to $T_A\mathcal{A}_\text{flat}$ is the subspace satisfying $d_Aa=0$, the linearization of the flatness condition. Let
$$\trace(\cdot,\cdot):\so(3)\otimes\so(3)\to\RR$$
denote the invariant positive definite form $\trace(X,Y)=-\trace(X \circ Y)$, which gives an inner product on $\so(\Ad E)\subset\End(\Ad E)$. We then define a 2-form $\wt\omega$ on $\mathcal{A}_\text{flat}$ via:
\begin{equation} \label{def_symp_form}
\wt\omega(a\wedge b)=\frac{1}{4\pi^2}\int_\Sigma \trace(a\wedge b)
\end{equation}
Here, we use the natural composite of $\trace$ with the wedge product $\wedge$ on 1-forms. The tangent space to an equivalence class $[A]\in\mathcal{B}_\text{flat}$ is the quotient $T_A\mathcal{A}_\text{flat}/T_A(\mathcal{G}\cdot A)$. The key point is that $\omega$ is annihilated by $T_A(\mathcal{G}\cdot A)$. A vector in $T_A(\mathcal{G}\cdot A)$ is given by $d_A v\in\Omega^1(\Sigma^*)\otimes\so(\Ad E)$ for some $v\in\Omega^0(\Sigma^*)\otimes\so(\Ad E)$. We have:
$$\wt\omega(d_A v\wedge b)=\frac{1}{4\pi^2}\int_\Sigma \trace(d_A v,b)=-\frac{1}{4\pi^2}\int_\Sigma \trace(v,d_A b)=0$$
The second equality is due to Stoke's theorem ($\Sigma^*$ is not closed, but all functions involved are constant or zero near the boundary), and the third is because $b$ is infinitesimally flat ($d_A b=0$). Hence, $\wt\omega$ descends to a 2-form $\omega$ on the quotient $\cB_\text{flat}$. Nondegeneracy and closedness are standard results going back to \cite{AB}, where it is shown that $\omega$ is the symplectic form on $\mathcal{R}_{g,n}$ arising from an infinite dimensional symplectic reduction of $(\mathcal{A},\wt\omega)$ with moment map the curvature $F_\bullet:\mathcal{A}\to\Omega^2(\Sigma,\so(\Ad E))$.

\subsection{Universal Bundles and Generators for Cohomology} \label{subsection_univ_bundles}

Fundamental to the computation of the cohomology of $\mathcal{M}_g^0(2,1)$ is the knowledge that a certain canonical collection of cohomology classes generate the full rational cohomology ring. These classes were first described in \cite{AB}, where the methods of infinite dimensional equivariant Morse theory are used to show that they are generators. This technique was generalized in \cite{biswas_cohom} (using \cite{nitsure_cohom}) to the case of parabolic bundles, the essential aspects of which we review here.

The cohomology classes we describe all arise naturally from one or more ``universal bundles'' over a product of $\Sigma$ with a moduli space or classfiying space. This terminology is used for several distinct concepts in this circle of ideas, depending on the point of view taken of the moduli space of interest. We will take the flat connection point of view for this. Recall that $\mathcal{A}_\text{flat}$ is our space of flat connections on $\Ad E$ over $\Sigma^*$. Letting $\pi$ now denote the projection to $\mathcal{A}_\text{flat}\times\Sigma\to\Sigma$, there is an obvious tautological family of connections on $\pi^*(\Ad E)$, which on the slice through $A$ is just the connection $d_A$. This family is smooth in $\mathcal{A}_\text{flat}$ directions, and is preserved by the natural action of the smooth $\SU(2)$ gauge group $\mathcal{G}$. The center $\{\pm 1\}$ of $\mathcal{G}$ of constant scalar automorphisms acts trivially on $\mathcal{A}_\text{flat}$. In the holomorphic case, the center of the gauge group acted trivially on the base but not on the bundle; here we avoid this problem by passing to the adjoint bundle. The gauge group $\mathcal{G}$ acts on $\pi^*(\Ad E)$ via:
$$g\cdot(t,A,x)=(g_x\circ t\circ g_x^{-1},g\cdot A,x)$$
where $t\in(\Ad E)_x\subset\End(E_x)$. The center acts trivially, so this action descends to a free one of the quotient $\obar{\mathcal{G}}$. The quotient of $\pi^*\Ad E$ by $\obar{\mathcal{G}}$ is an $\SO(3)$ vector bundle $\mathbf{E}^\ad\to\mathcal{R}_{g,n}\times\Sigma$. Since an automorphism $g$ preserves the complex line $H_k\subset\Ad E_{x_k}$ for each $k$, it preserves the complex line subbundle $\pi^*(H_k)\subset \Ad E|_{\mathcal{A}_\text{flat}\times\Sigma}$. Hence, inside of $\mathbf{E}^\ad$, there is a subbundle $\mathbf{V}_k$ over $\mathcal{R}_{g,n}\times\{x_k\}$ coming from the quotient of $\pi^*(H_k)$ for each $k$. The pair $(\mathbf{E}^\ad,\{\mathbf{V}_k\})$ has a flat structure $\mathbf{A}$ in $\Sigma$ directions which moves continuously in $\mathcal{R}_{g,n}$ directions, and on $\{[A]\}\times\Sigma$ the bundle restricts to one with a flat connection on $\Sigma^*$ isomorphic to $A$, with the isomorphism carrying $(\mathbf{V}_k)_{[A]}$ to the subspace $H_k$.

\begin{definition} \label{def_univ_pair}
We shall call the pair $(\mathbf{E}^\ad,\{\mathbf{V}_k\})$ a \emph{universal pair}.
\end{definition}

\begin{thm} \label{thm_generating_set}
The rational cohomology ring $H^*(\mathcal{M}^0;\QQ)$ is generated as a $\QQ$-algebra by elements $p_1(\mathbf{E}^\ad)/h$, for $h\in H_*(\Sigma;\QQ)$, and the classes $c_1(\mathbf{V}_k)$.
\end{thm}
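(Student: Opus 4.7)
The plan is to adapt the infinite-dimensional equivariant Morse theory argument of Atiyah--Bott \cite{AB} to the parabolic setting, following Nitsure \cite{nitsure_cohom} and Biswas \cite{biswas_cohom}. The strategy breaks into three steps: first, compute the $\mathcal{G}$-equivariant cohomology of the full space of connections $\mathcal{A}$; second, establish that the Harder--Narasimhan (or Yang--Mills) stratification of $\mathcal{A}$ is equivariantly perfect, yielding a surjection from $H^*_{\mathcal{G}}(\mathcal{A};\QQ)$ onto $H^*(\mathcal{M}^0;\QQ)$; and third, identify the generators of the former with the universal classes named in the statement.

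For the first step, $\mathcal{A}$ is affine and hence contractible, so $H^*_{\mathcal{G}}(\mathcal{A};\QQ)\cong H^*(B\mathcal{G};\QQ)$. The classifying space $B\mathcal{G}$ carries a universal pair $(\mathcal{E}^{\mathrm{univ}},\{\mathcal{V}^{\mathrm{univ}}_k\})$ over $B\mathcal{G}\times\Sigma$, with $\mathcal{V}^{\mathrm{univ}}_k$ living over $B\mathcal{G}\times\{x_k\}$. Using the homotopy equivalence of $B\mathcal{G}$ with an appropriate space of sections and a Leray--Hirsch/slant-product argument, one computes that $H^*(B\mathcal{G};\QQ)$ is a polynomial algebra generated by the slant products $p_1(\mathcal{E}^{\mathrm{univ}})/h$ for $h$ running over a basis of $H_*(\Sigma;\QQ)$, together with the classes $c_1(\mathcal{V}^{\mathrm{univ}}_k)$ for $k=1,\ldots,n$. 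The classes $c_1(\mathcal{E}^{\mathrm{univ}})/h$ do not appear because we are working with the adjoint bundle, whose structure group is $\SO(3)$.

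For the second step, the Atiyah--Bott stratification of $\mathcal{A}$ by Harder--Narasimhan type (refined by the parabolic data) has the semistable stratum $\mathcal{A}^{\mathrm{ss}}$ as its bottom piece. Under the genericity hypothesis on the weights, every semistable parabolic bundle is automatically stable, so the residual action of $\mathcal{G}/\{\pm 1\}$ on $\mathcal{A}^{\mathrm{ss}}$ is free and $H^*_{\mathcal{G}}(\mathcal{A}^{\mathrm{ss}};\QQ)\cong H^*(\mathcal{M}^0;\QQ)$. The key claim is that the stratification is equivariantly perfect, i.e.\ the Thom--Gysin sequences for successive open unions of strata split into short exact sequences. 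I expect this to be the main obstacle: it requires showing that the equivariant Euler classes of the normal bundles to the unstable strata are non-zero-divisors in the relevant equivariant cohomology rings, which in the parabolic setting becomes a somewhat delicate combinatorial analysis over all possible Harder--Narasimhan types together with their parabolic refinements. Since this is carried out in detail in \cite{nitsure_cohom} and \cite{biswas_cohom}, I would simply quote the resulting surjection $H^*(B\mathcal{G};\QQ)\twoheadrightarrow H^*(\mathcal{M}^0;\QQ)$ rather than reproving it.

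For the third step, I would verify that the restriction of $(\mathcal{E}^{\mathrm{univ}},\{\mathcal{V}^{\mathrm{univ}}_k\})$ along the inclusion $\mathcal{A}^{\mathrm{ss}}/\mathcal{G}\cong\mathcal{M}^0\hookrightarrow B\mathcal{G}$, composed with the adjoint construction in the bundle factor, reproduces precisely the universal pair $(\mathbf{E}^{\ad},\{\mathbf{V}_k\})$ of Definition~\ref{def_univ_pair}. Naturality of the slant product then identifies the images of the generators of $H^*(B\mathcal{G};\QQ)$ with $p_1(\mathbf{E}^{\ad})/h$ and $c_1(\mathbf{V}_k)$, so the surjection of the previous step shows these classes generate $H^*(\mathcal{M}^0;\QQ)$ as a $\QQ$-algebra.
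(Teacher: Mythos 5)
Your proposal is correct and follows essentially the same route as the paper: the paper's entire proof is the one-line citation ``this is a straightforward consequence of the material in \cite{biswas_cohom},'' and your three-step outline (contractibility of $\mathcal{A}$ giving $H^*_{\mathcal{G}}(\mathcal{A})\cong H^*(B\mathcal{G})$, equivariant perfection of the parabolic Harder--Narasimhan stratification giving the surjection onto $H^*(\mathcal{M}^0;\QQ)$, and identification of the universal classes) is precisely the Atiyah--Bott/Nitsure/Biswas argument that citation encapsulates. Nothing is missing; you have simply written out what the paper delegates to the references.
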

\begin{proof}
This a is straightforward consequence of the material in \cite{biswas_cohom}.
\end{proof}

\section{Betti Numbers of The Moduli Space} \label{sec_betti_numbers}

While \S\ref{subsection_univ_bundles} provides a generating set for the cohomology ring $H^*(\mathcal{R}_{g,n};\QQ)$, and \S\ref{section_relations} will describe relations among them, proving that these relations are a complete set for $g=0$ will require knowledge of the Betti numbers of $\mathcal{R}_{0,n}$. A convenient formula for these that is general enough to accommodate an arbitrary choice of parabolic weights appears in \cite{boden_parabolic}, and we can actually obtain formulas for the general case $g\neq 0$ without much additional work. To apply this to the specific case of equal weights treated here, we will need to interpret that formula a bit. To this end, we briefly review the methodology in \cite{boden_parabolic}.

As seen in \cite{nitsure_cohom}, the space of holomorphic structures $\mathcal{C}$ on $E\to\Sigma$ comes with a stratification analogous to the stratification described in \cite{AB} in the non-parabolic case. The subset of $\Cs\subset\mathcal{C}$ for which the fixed weights and 1 dimensional subspaces $F_k$ give a stable parabolic structure on the rank two bundle $E$ is an open subset. The point is that not all bundles in the complement are born equal: some unstable bundles are \emph{more} unstable than others. Given an unstable rank two bundle $\mathcal{E}$, there is a unique destabilizing line subbundle $\mathcal{F}$ sitting in a slope-decreasing short exact sequence
$$0\to\mathcal{F}\to\mathcal{E}\to\mathcal{Q}\to 0$$
Still following \cite{boden_parabolic}, we define $\lambda=\deg(\mathcal{F})$. Conceptually, the larger $\lambda$ is, the more unstable $\mathcal{E}$ is. We can get more data from $\mathcal{F}$ by recording whether it coincides with $F_k$ at each parabolic point. Namely, let $e_k=\dim(\mathcal{F}_{x_k}\cap F_k)$, and denote by $\ubar e$ the vector of the $e_k$'s, which are either 0 or 1. The invariant $\ubar e$ is a property of $\mathcal{E}$ and is exactly the \emph{intersection matrix} considered in \cite{nitsure_cohom}. The pair $(\lambda,\ubar e)$ is called the \emph{type} of $\mathcal{E}$, and we let $\mathcal{C}_{\lambda,\ubar e}$ denote the locally closed submanifold of $\mathcal{C}$ consisting of holomorphic structures on $E$ of type $(\lambda,\ubar e)$. The crucial point is that this strata for a fixed type is \emph{connected} (\cite{nitsure_cohom}, Prop. 3.5). The codimension $d_{\lambda,\ubar e}$ of the strata $\mathcal{C}_{\lambda,\ubar e}$ is
\begin{equation} \label{codim_form}
d_{\lambda,\ubar e}=2\lambda+n+g-1+\sum_k e_k
\end{equation}
which essentially appears in \cite{boden_parabolic} as equation (17).

\begin{remark}
In \cite{boden_parabolic}, the assumption is made that $\deg^\PAR(E)=0$, which is achieved by taking $\deg(E)=-n$. The moduli space in this case is isomorphic to ours as can be easily seen by comparing the representation varieties. For the rest of this section, we temporarily assume $\deg(E)=-n$ so as to use the formulas appearing there without attempting modification.
\end{remark}

The essential content of \cite{boden_parabolic} is that the stratification of $\mathcal{C}$ via the strata $\mathcal{C}_{\lambda,\ubar e}$ is \emph{equivariantly perfect} for the action of the complex gauge group $\GCp$. As a result, one can record the equivariant Poincar\'e polynomial of the total space as a sum over terms coming from the individual strata. These contributions can be computed exactly for the unstable strata, thus yielding a formula for the Poincar\'e polynomial for the quotient. We may now record the result in \cite{boden_parabolic} on the betti numbers of $\mathcal{M}^0_{g,n}$. For a topological space $X$, let $P_t(X)$ denote its Poincar\'e polynomial $\sum_d \text{rk}H_*(X)\cdot t^d$.

\begin{thm} \label{thm_poincare_poly}
The Poincar\'e polynomial of the representation variety $\mathcal{R}_{g,n}$ for $n\geq 1$ is given by:
\begin{equation} \label{poincare_poly}
P_t(\mathcal{R}_{g,n})=\frac{(1+t)^{2g-2}}{(1-t)^2}\left((1-t+t^2)^{2g}(1+t^2)^{n-1}-(1-t^2)\sum_{\lambda,\ubar e}t^{2d_{\lambda,\ubar e}}\right)
\end{equation}
where the sum ranges over all types $(\lambda,\ubar e)$ which are destabilizing.
\end{thm}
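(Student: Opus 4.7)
Plan. The formula (\ref{poincare_poly}) is essentially a repackaging of the main equivariant Morse-theoretic computation of \cite{boden_parabolic} in a form adapted to our equal-weight setting. My approach would be to quote Boden's result and then perform the needed algebraic simplification, rather than redo the Morse theory from scratch.

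The starting point is Boden's theorem that the Harder--Narasimhan stratification of $\mathcal{C}$ by type $(\lambda,\ubar e)$ is equivariantly perfect for the complex parabolic gauge group $\GCp$. Combined with the contractibility of $\mathcal{C}$, this produces the Morse identity
\begin{equation*}
P_t(B\GCp) \;=\; P_t^{\GCp}(\Cs) \;+\; \sum_{(\lambda,\ubar e)\text{ destab.}} t^{2 d_{\lambda,\ubar e}}\, P_t^{\GCp}(\mathcal{C}_{\lambda,\ubar e}),
\end{equation*}
in which the codimensions $d_{\lambda,\ubar e}$ are those of (\ref{codim_form}). I would then substitute the known closed form for $P_t(B\GCp)$ (obtained from the fibration $B\GCp \to B\mathcal{G}^c$ with fiber $(\CP^1)^n$, together with the Atiyah--Bott formula for $B\mathcal{G}^c$) and the equivariant Poincar\'e polynomials of the unstable strata (each of which retracts equivariantly onto an extension space $0\to\mathcal{L}\to\mathcal{E}\to\mathcal{L}'\to 0$ whose quotient by the relevant stabilizer is a symmetric product of $\Sigma$ together with flag data). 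Solving for $P_t^{\GCp}(\Cs)$ and using that the center of $\GCp$ acts trivially on $\Cs$ (and, for the fixed-determinant variant, the residual center is finite and hence rationally invisible) while the quotient acts freely with quotient identified via Theorem \ref{equiv_mod_spaces} with $\cR_{g,n}$, one extracts a Poincar\'e polynomial for $\cR_{g,n}$.

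The final step is algebraic: one must verify that Boden's master formula, specialized to $t_k = 1/4$, rearranges into the compact shape (\ref{poincare_poly}), with the prefactor $(1+t)^{2g-2}/(1-t)^2$ and the bracketed numerator emerging after systematic cancellation among the $BU(1)$, $BU(2)$, flag, and extension factors. The main obstacle, such as it is, is purely bookkeeping: translating between our conventions and Boden's (the remark notes that Boden normalizes via $\deg(E) = -n$, which shifts the destabilizing criterion), and tracking which powers of $(1\pm t)$ cancel. No genuinely new idea is needed beyond specializing Boden's general result to our symmetric weights, after which the stated formula drops out.
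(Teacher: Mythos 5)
Your proposal is correct and is essentially the paper's argument: the paper likewise treats the theorem as a citation of Boden's equivariantly perfect stratification, quoting his equation (21) for $P_t(\mathcal{M}_{g,n})$ directly and dividing by $P_t(J(\Sigma))=(1+t)^{2g}$ to pass to the fixed-determinant space $\mathcal{M}^0_{g,n}\cong\mathcal{R}_{g,n}$, rather than re-running the Morse identity from $P_t(B\GCp)$ as you do. The only point worth tightening is your description of the unstable strata (their equivariant cohomology is just a product of line-bundle gauge-group classifying spaces, each contributing $(1+t)^{2g}/(1-t^2)$; no symmetric products enter in this rank-two Atiyah--Bott picture), and you should make explicit that the passage from $\mathcal{M}_{g,n}$ to $\mathcal{M}^0_{g,n}$ costs exactly the factor $(1+t)^{2g}$.
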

\begin{proof}
We know that $\mathcal{R}_{g,n}\cong\mathcal{M}^0_{g,n}$. Formula (\ref{poincare_poly}) is just equation (21) of \cite{boden_parabolic} divided by the Poincar\'e polynomial of the Jacobian $J(\Sigma)$, since the equation there is for the moduli space $\mathcal{M}_{g,n}$, which is a cohomologically trivial bundle over $\mathcal{M}^0_{g,n}$ with fiber homeomorphic to $J(\Sigma)$.
\end{proof}

\begin{example}
Let us illustrate how to use (\ref{poincare_poly}) to compute the Poincar\'e polynomial in a simple example: $\mathcal{R}_{0,3}$, which consists of the single equivalence class $[\mathbf{i},\mathbf{k},\mathbf{j}]$.
We need to understand the domain of the sum in the formula. The parabolic degree of the destabilizing line bundle $\mathcal{F}$ of a parabolic bundle $\mathcal{E}$ of type $(\lambda,\ubar e)$ is:
$$\deg^\PAR(\mathcal{F})=\lambda+\sum^3_{k=1}\left(\tfrac{1}{4}(1-e_k)+\tfrac{3}{4}e_k\right)=\lambda+\tfrac{3}{4}+\tfrac{1}{2}\sum_{k=1}^3e_k.$$
For such $\mathcal{F}$ to be destabilizing we need $\mu(\mathcal{F})\geq\mu(\mathcal{E})=3/2$. Letting $e=\sum_{k=1}^3e_k$, we therefore need
\begin{equation} \label{3pts_unstable}
4\lambda+2e\geq -3
\end{equation}
It is convenient to visualize the pairs $(\lambda,e)$ satisfying \ref{3pts_unstable}; see Figure \ref{fig_n3_unstable_grid}.

\begin{figure}[h] \label{fig_n3_unstable_grid}
\begin{overpic}[scale=1.5]{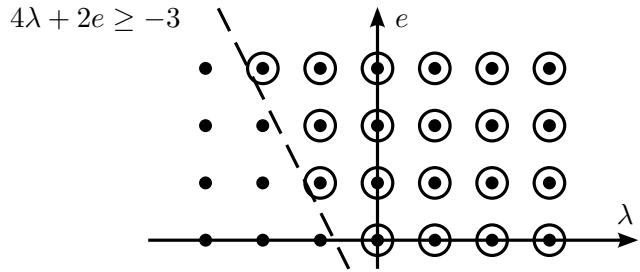}
\put(50,50){$e$}
\put(95,10){$\lambda$}
\put(-28,50){$4\lambda+2e\geq-3$}
\end{overpic}
\caption{The locus of unstable pairs $(\lambda,e)$ for $n=3$.}
\end{figure}

We see that we can take the sum to over all $e$ and $\lambda\geq-1$, except that we must add a contribution for $(-2,3)$ and subtract one for $(-1,0)$. We have:
\begin{align*}
\sum_{\lambda,\ubar e}t^{2d_{\lambda,\ubar e}}=&\;t^2-1+\sum_{\lambda\geq -1,\ubar e}t^{2(2\lambda+2+e)} \\
=&\;t^2-1+t^4\sum_{\lambda\geq -1}t^{4\lambda}\left(\sum_{e_1=0,1}t^{2e_1}\sum_{e_2=0,1}t^{2e_2}\sum_{e_3=0,1}t^{2e_3}\right)\\
=&\;t^2-1+(1-t^4)^{-1}(1+t^2)^3
\end{align*}
Since $g=0$ here, formula (\ref{poincare_poly}) reduces to
$$\frac{1}{(1-t^2)^2}\left((1+t)^2-(1-t^2)\sum_{\lambda,\ubar e}t^{2d_{\lambda,\ubar e}}\right).$$
Plugging in for the summation and applying simple algebra, we see that $P_t(\mathcal{R}_{0,3})=1$, as expected.
\end{example}

Focusing on the genus 0 case temporarily, we can use (\ref{poincare_poly}) to get a beautiful result which yields explicit formulae for the Poincar\'e polynomials.

\begin{prop} \label{prop_genus_0_poly_monomials}
Suppose $n=2,+1>3$. Then up to its middle dimension $n-3$, the Poincar\'e polynomial $P_t(\mathcal{R}_{0,n})$ equals the Poincar\'e polynomial of the graded algebra $\CC[\alpha,\beta,\delta_1,\ldots,\delta_n]/(\delta_i^2)$, where $\alpha$, $\beta$, $\delta_i$ have degrees 2, 4, and 2, respectively.
\end{prop}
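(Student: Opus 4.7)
The plan is to compare the Poincaré polynomial formula (\ref{poincare_poly}) at $g=0$ directly with the Poincaré polynomial of the graded algebra in the statement, and to show that their difference has no nonzero terms in degrees $\leq n-3$. Since the graded algebra factors as $\CC[\alpha]\otimes\CC[\beta]\otimes\bigotimes_i\CC[\delta_i]/(\delta_i^2)$, its Poincaré polynomial is
\[
Q_t=\frac{(1+t^2)^n}{(1-t^2)(1-t^4)}=\frac{(1+t^2)^{n-1}}{(1-t^2)^2},
\]
after using $1-t^4=(1-t^2)(1+t^2)$. Specializing (\ref{poincare_poly}) to $g=0$ then rewrites $P_t(\mathcal{R}_{0,n})$ as $Q_t$ minus the correction $\frac{1}{1-t^2}\sum_{\lambda,\ubar e}t^{2d_{\lambda,\ubar e}}$, so the proposition reduces to showing this correction has minimum degree strictly greater than $n-3$.

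To obtain that bound, I note first that multiplication by $1/(1-t^2)=1+t^2+t^4+\cdots$ only raises degrees, so it suffices to bound $2d_{\lambda,\ubar e}$ from below over destabilizing $(\lambda,\ubar e)$. Generalizing the $n=3$ example, the induced parabolic weight of a line subbundle $\mathcal{F}$ at $x_k$ is $3/4$ if $\mathcal{F}_{x_k}=F_k$ (i.e.\ $e_k=1$) and $1/4$ otherwise, which yields $\deg^\PAR(\mathcal{F})=\lambda+n/4+e/2$ with $e:=\sum_k e_k$. Under the Remark's convention $\deg(E)=-n$ we have $\mu^\PAR(\mathcal{E})=0$, so the destabilizing condition $\mu^\PAR(\mathcal{F})\geq\mu^\PAR(\mathcal{E})$ reads $4\lambda+2e\geq -n$, equivalently $2\lambda+e\geq -m$, since $2\lambda+e\in\ZZ$ and $n=2m+1$. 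Substituting into (\ref{codim_form}) at $g=0$ gives
\[
d_{\lambda,\ubar e}=(2\lambda+e)+n-1\geq -m+n-1=m,
\]
whence $2d_{\lambda,\ubar e}\geq 2m=n-1>n-3$, as required.

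I do not anticipate a serious obstacle: the argument is essentially a mechanical extension of the $n=3$ example, combined with the factorization $(1-t^4)=(1-t^2)(1+t^2)$ that identifies the ``stable part'' of (\ref{poincare_poly}) with $Q_t$. The one point requiring care is the bookkeeping of induced parabolic weights on a line subbundle, which proceeds exactly as in that worked example; once it is set up, the lower bound on $d_{\lambda,\ubar e}$ and the vanishing of the correction in low degrees are immediate.
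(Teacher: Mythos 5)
Your argument is correct, and it reaches the conclusion by a cleaner route than the paper's. Both proofs start from Boden's formula (\ref{poincare_poly}) at $g=0$ and both ultimately rest on the identity $(1+t^2)^{n-1}/(1-t^2)^2 = (1+t^2)^n/\bigl((1-t^2)(1-t^4)\bigr)$, which is the Hilbert series of $\CC[\alpha,\beta,\delta_1,\ldots,\delta_n]/(\delta_i^2)$. But the mechanisms differ. The paper approximates the lattice region $\{4\lambda+2e\geq -n,\ 0\leq e\leq n\}$ by the half-space $\lambda\geq -m$, computes that the resulting rectangular sum cancels the leading term $(1+t^2)^{n-1}$ of (\ref{poincare_poly}) exactly, and is left with $P_t(\mathcal{R}_{0,n})=-\tfrac{1}{1-t^2}\sum_{T_n}(\cdots)$, a sum over a triangle of correction terms which it then matches monomial-by-monomial against the algebra. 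You instead keep the full destabilizing sum intact, recognize the leading term of (\ref{poincare_poly}) as the Hilbert series $Q_t$ directly, and observe that since $n=2m+1$ is odd the inequality $4\lambda+2e\geq -n$ forces $2\lambda+e\geq -m$, hence $d_{\lambda,\ubar e}=2\lambda+e+n-1\geq m$ and every term of the error $\tfrac{1}{1-t^2}\sum t^{2d_{\lambda,\ubar e}}$ lives in degree at least $n-1>n-3$. Your degree bound is tight (the degree-$(n-1)$ coefficient of the error is $\sum_{2\lambda+e=-m}\binom{n}{e}=2^{n-1}$, consistent with Corollary \ref{cor_genus_0_poincare_poly}), and it proves the stated proposition with less bookkeeping. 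What the paper's triangle decomposition buys in exchange is an exact closed expression for \emph{all} of $P_t(\mathcal{R}_{0,n})$ as a sum of explicit truncated geometric series indexed by the triangle, not merely agreement below the middle dimension; your proof, as written, says nothing about degrees $\geq n-1$. For the proposition itself, and for the downstream uses in Corollary \ref{cor_genus_0_poincare_poly} (which relies only on the proposition plus Poincar\'e duality), your argument suffices.
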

\begin{proof}
The proof is an exercise in bookkeeping for the sum over strata in (\ref{poincare_poly}). For a type $(\lambda,\ubar e)$ to be destabilizing, we now require (againg letting $e$ denote the sum of the $e_k$'s):
\begin{equation} \label{unstable_ineq}
4\lambda+2e\geq -n
\end{equation}
Let $Q_n\subset\ZZ^2$ be the subset of the lattice points in the $(\lambda,e)$ plane satisfying (\ref{unstable_ineq}) and bounded by $0\leq e\leq n$. It can be approximated by the subset $\lambda\geq -m$, $0\leq e\leq n$; this approximation leaves out a small triangle of lattice points to the left of $\lambda=-m$, but errantly includes a triangle of the same size but rotated $180^\circ$, to the right of $\lambda=-m$ (see Figure \ref{fig_n9_unstable_grid}).

\begin{figure} \label{fig_n9_unstable_grid}
\begin{overpic}[scale=1.0]{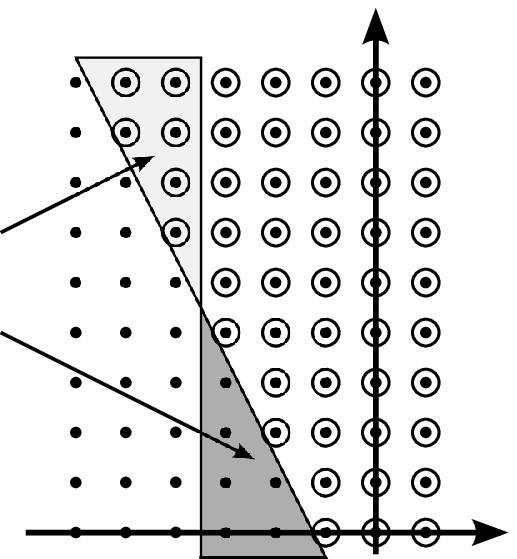}
\put(-10,57){$T_n$}
\put(-10,40){$T_n'$}
\put(84,10){$\lambda$}
\put(75,93){$e$}
\end{overpic}
\caption{The triangles $T_n$ and $T'_n$.}
\end{figure}

Let $T_n$ denote the triangle of lattice points in $Q_n$ with $\lambda<-m$. Each lattice point $(\lambda,e)$ in $Q_n$ contributes a sum of terms $t^{2d_{\lambda,\ubar e}}$ consisting of vectors $\ubar e$ with $\sum_ke_k=e$. There are $\binom{n}{e}$ such vectors, so the contribution of the point $(\lambda,e)$ is $\binom{n}{e}t^{4\lambda+2n-2+2e}$. Each point $(\lambda,e)$ in $T_n$ pairs with the point $(\lambda',e')=(-n-\lambda,n-e)$ obtained by rotating $180^\circ$ about the center $(-\frac{n}{2},\frac{n}{2})$. The approximating half space $\lambda\geq -m$ misses $(\lambda,e)\in T_n$, but wrongfully includes $(\lambda',e')$ in the rotated triangle $T_n'$. The correction for this in the sum over unstable strata is therefore
\begin{equation} \label{correction_term}
\binom{n}{e}\left(t^{n-2+(4\lambda+n+2e)}-t^{n-2-(4\lambda+n+2e)}\right).
\end{equation}
We want to separate the sum $\sum_{Q_n} t^{2d_{\lambda,\ubar e}}$ into the part $\sum_{\lambda\geq-m,\ubar e}t^{2d_{\lambda,\ubar e}}$, plus the correction which we momentarily denote by $\sum_{T_n}(\cdots)$, consisting of a sum of terms (\ref{correction_term}). We focus first on the the rectangular sum. We have:
\begin{align*}
\sum_{\lambda\geq-m,\ubar e}t^{2d_{\lambda,\ubar e}}=&\;t^{2n-2}\sum_{\lambda\geq -m}t^{4\lambda}\left(\sum_{e_1=0,1}t^{2e_1}\cdots\sum_{e_n=0,1}t^{2e_n}\right) \\
=&\;(1-t^4)^{-1}(1+t^2)^n
\end{align*}
Going back to the formula for the Poincar\'e polynomial plugging in $g=0$ we get:
\begin{align*}
P_t(\mathcal{R}_{0,n})=&\;\frac{1}{(1-t^2)^2}\left((1+t^2)^{n-1}-(1-t^2)\left[(1-t^4)^{-1}(1+t^2)^n+\sum_{T_n}t^{2d_{\lambda,\ubar e}}\right]\right) \\
=&\;-\frac{1}{1-t^2}\sum_{T_n}(t^{2d_{\lambda,\ubar e}})
\end{align*}
We see that the entire cohomology is determined by the contributions from the triangle $T_n$. Let $T_n'$ denote the rotation of $T_n$ about the center point $(-\frac{n}{2},\frac{n}{2})$. Pairs $(\lambda,e)\in T_n'$, that is with $4\lambda+2e<-n$ and $\lambda\geq -m$, are in bijection with pairs $(\mu,e)$ with $4\mu+2e\leq n-3$ and $\mu\geq 0$. The contribution to the above sum for such a pair is obtained by dividing equation (\ref{correction_term}) by $t^2-1$:
\begin{equation} \label{correction_term_II}
\binom{n}{e}\sum_{i=0}^{D} t^{4\mu+2e+2i}
\end{equation}
where $D=n-3-(4\mu+2e)$. Now, we consider the graded algebra
$$\mathbb{B}_{0,n}:=\CC[\alpha,\beta,\delta_1,\ldots,\delta_n]/(\delta_i^2),$$
where $\alpha$, $\beta$, $\delta_k$ have degrees 2, 4, and 2, respectively. The dimenension in each grading of $\mathbb{B}_{0,n}$ can be found by counting monomials of the form $\alpha^a\beta^b\delta_1^{e_1}\cdots\delta_n^{e_n}$ for $e_k=0,1$. For fixed $e=\sum_ke_k$, the number of such monomials with fixed $a$ and $b$ is $\binom{n}{e}$. The contribution to $P_t(\mathcal{R}_{0,n})$ from such a monomial can be viewed as coming from the term $t^{2a+4b+2e}$ in the above sum (\ref{correction_term_II}) for $\mu=b$ and $i=a$. This shows that the Poincar\'e polynomials of $\mathcal{R}_{0,n}$ and $\mathbb{B}_{0,n}$ are equal up to the middle dimension $n-3$.
\end{proof}

We can use this theorem to get our first glimpses of the recursive nature of the $\mathcal{R}_{g,n}$ in the number of parabolic points $n$, and eventually the genus $g$.

\begin{cor} \label{cor_genus_0_poincare_poly}
Suppose $n=2m+1>3$. For the genus 0 representation varieties, we have:
\begin{equation} \label{genus_0_poincare_poly}
P_t(\mathcal{R}_{0,n+2})=(1+t^2)^2P_t(\mathcal{R}_{0,n})+2^{n-1}t^{n-1}
\end{equation}
\end{cor}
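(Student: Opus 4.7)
The plan is to combine Proposition \ref{prop_genus_0_poly_monomials} with Poincar\'e duality to reduce the identity to a classical binomial sum. Let $Q_n(t) := P_t(\mathbb{B}_{0,n}) = (1+t^2)^{n-1}/(1-t^2)^2$ (after simplifying $1-t^4=(1-t^2)(1+t^2)$); note that $Q_{n+2}=(1+t^2)^2 Q_n$.

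First I would observe that both sides of the claimed equation are palindromic polynomials of degree $2n-2$ with center $n-1$: this is Poincar\'e duality on the compact manifold $\mathcal{R}_{0,n+2}$ (real dimension $2n-2$) for the left side, and for the right side it follows since $(1+t^2)^2$ is palindromic with center $2$ while $P_t(\mathcal{R}_{0,n})$ is palindromic with center $n-3$ by Poincar\'e duality on $\mathcal{R}_{0,n}$. Both sides also carry only even powers of $t$ (either by Theorem \ref{thm_cohom_ring_genus_0} or by direct inspection of (\ref{poincare_poly})). Consequently, it suffices to match coefficients in even degrees $0,2,\ldots,n-1$.

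Next I would check the comparison in each of those degrees. For even $i \leq n-3$, Prop.~\ref{prop_genus_0_poly_monomials} gives $[t^i] P_t(\mathcal{R}_{0,n+2}) = [t^i] Q_{n+2}$ and $[t^i] (1+t^2)^2 P_t(\mathcal{R}_{0,n}) = [t^i](1+t^2)^2 Q_n = [t^i] Q_{n+2}$, since the convolution on the right samples $P_t(\mathcal{R}_{0,n})$ only in degrees $\leq n-3$ (where Prop.~\ref{prop_genus_0_poly_monomials} applies). Hence the two sides agree for $i \leq n-3$. At $i=n-2$ (odd) both sides vanish. The only substantive comparison is at the middle degree $i=n-1$: Prop.~\ref{prop_genus_0_poly_monomials} still applies on the left because $n-1$ is the middle dimension of $\mathcal{R}_{0,n+2}$, while on the right Poincar\'e duality on $\mathcal{R}_{0,n}$ forces $[t^{n-1}] P_t(\mathcal{R}_{0,n}) = [t^{(2n-6)-(n-1)}] P_t(\mathcal{R}_{0,n}) = [t^{n-5}] Q_n$. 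A short expansion of $(1+t^2)^2$ then reduces the difference between the two sides at $t^{n-1}$ to $[t^{n-1}]Q_n - [t^{n-5}]Q_n$.

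Finally I would evaluate this residual term. Setting $u=t^2$ and $n=2m+1$, one has $Q_n = (1+u)^{2m}/(1-u)^2$, so multiplying by $(1-u^2)$ collapses the expression to $(1+u)^{2m+1}/(1-u)$. Hence
$$[u^m]Q_n - [u^{m-2}]Q_n = [u^m]\frac{(1+u)^{2m+1}}{1-u} = \sum_{j=0}^{m}\binom{2m+1}{j} = 2^{2m} = 2^{n-1},$$
by the symmetry $\binom{2m+1}{j} = \binom{2m+1}{2m+1-j}$. This produces exactly the claimed correction $2^{n-1} t^{n-1}$, and palindromicity propagates the identity to degrees above the middle. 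The main obstacle, minor as it is, lies in keeping straight which coefficients come directly from Prop.~\ref{prop_genus_0_poly_monomials} versus from Poincar\'e duality; once that bookkeeping is organized, the proof collapses to the binomial identity above.
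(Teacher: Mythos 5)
Your proposal is correct and follows essentially the same route as the paper: agreement below the middle dimension via Proposition \ref{prop_genus_0_poly_monomials}, Poincar\'e duality to handle the middle coefficient and the degrees above it, and the binomial count $\sum_{j=0}^{m}\binom{2m+1}{j}=2^{2m}$ for the excess $2^{n-1}$. The only difference is cosmetic — you package the count as a coefficient extraction from the closed form $Q_n=(1+t^2)^{n-1}/(1-t^2)^2$, whereas the paper counts the same monomials of $\mathbb{B}_{0,n+2}$ directly.
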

\begin{proof}
This follows from the isomorphism
$$\mathbb{B}_{0,n+2}\cong\mathbb{B}_{0,n}\otimes\CC[\delta_{n+1},\delta_{n+2}]/(\delta_{n+1}^2,\delta_{n+2}^2).$$
Let $\mathcal{T}_{0,n}^d$ denote the collection of monomials $\alpha^a\beta^b\delta_1^{e_1}\cdot\ldots\cdot\delta_n^{e_n}$ in $\mathbb{B}_{0,n}$ of degree $d$. For example, $$T_{0,n}^4=\{\alpha^2,\beta,\alpha\delta_1,\ldots,\alpha\delta_n,\delta_1\delta_2,\ldots,\delta_{n-1}\delta_n\}$$
For $d$ up to the middle dimension of $\mathcal{R}_{0,n}$, the coefficient of $t^d$ in $P_t(\mathcal{R}_{0,n})$ is the cardinality of $T_{0,n}^d$. For $d$ strictly less than the middle dimension $2m$ for $\mathcal{R}_{0,n+2}$, any monomial in $T_{0,n+2}^d$ can be obtained by multiplying a monomial in $T_{0,n}^{d'}$ by $\delta_{n+1}$, $\delta_{n+2}$, both, or neither, for $d'\leq 2m-2$, which is the middle dimension for $\mathcal{R}_{0,n}$. This argument shows that $P_t(\mathcal{R}_{0,n+2})$ and $(1+t^2)^2P_t(\mathcal{R}_{0,n})$ agree up to degree $2m-2$.

In degree $2m$, we consider the collection of those monomials in $T_{0,n+2}^{2m}$ arising from multiplying a monomial in $T_{0,n}^{2m-2e_1-2e_2}$ by $\delta_{n+1}^{e_1}\delta_{n+2}^{e_2}$ for $(e_1,e_2)=(1,0),(0,1),(1,1)$, along with those arising from multiplying a monomial in $T_{0,n}^{2m-4}$ by $\beta$. These 4 disjoint subsets contribute $2\abs{T_{0,n}^{2m-4}}+2\abs{T_{0,n}^{2m-2}}$ to $\abs{T_{0,n+2}^{2m}}$, which is the same as the contribution to the coefficient of $t^{2m}$ in $P_t(\mathcal{R}_{0,n+2})$ from $(1+t^2)^2P_t(\mathcal{R}_{0,n})$ by Poincar\'e duality. It remains to count those monomials in $T_{0,n+2}^{2m}$ not arising this way. These are precisely those monomials in only the variables $\alpha$ and $\delta_i$'s for $i\leq n$. Of course, these are in bijection with vectors $\ubar e=(e_1,\ldots,e_n)$ with $e=\sum_ke_k\leq m$, which is easy seen to equal $2^{n-1}=2^{2m}$. The equation (\ref{genus_0_poincare_poly}) now follows immediately.
\end{proof}

With a bit more work, we get:

\begin{cor} \label{cor_genus_g_poincare_poly}
We have the following recursive formulas for the the Poincar\'e polynomials, where $n\geq 1$:
\begin{align}
P_t(\mathcal{R}_{g,n+2})=&\;(1+t^2)^2P_t(\mathcal{R}_{g,n})+2^{n-1}t^{2g+n-1}(1+t)^{2g} \label{genus_g_p_poly_rec_n} \\
P_t(\mathcal{R}_{g+1,n})=&\;(1+t^3)^2P_t(\mathcal{R}_{g,n})+2^{n-1}t^{2g+n-1}(1+t)^{2g}(1+t^2) \label{genus_g_p_poly_rec_g}
\end{align}
\end{cor}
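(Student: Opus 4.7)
The plan is to reduce both recursions to elementary algebraic identities by rewriting the Poincar\'e polynomial formula (\ref{poincare_poly}) in a form that cleanly isolates the $g$-dependence. Using $(1+t)^{2g-2}/(1-t)^2=(1+t)^{2g}/(1-t^2)^2$ together with $(1+t)(1-t+t^2)=1+t^3$, one obtains
\[
P_t(\mathcal{R}_{g,n})\;=\;\underbrace{\frac{(1+t^3)^{2g}(1+t^2)^{n-1}}{(1-t^2)^2}}_{A_{g,n}(t)}\;-\;\underbrace{\frac{(1+t)^{2g}\,t^{2g}\,\tilde S_n(t)}{1-t^2}}_{B_{g,n}(t)},
\]
where $\tilde S_n(t):=\sum_{\lambda,\ubar e}t^{2(2\lambda+n-1+e)}$, summed over destabilizing types $(\lambda,\ubar e)$ with $e=\sum_k e_k$. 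Crucially, the $g$-dependence of $d_{\lambda,\ubar e}$ has been absorbed into the prefactor $t^{2g}$, so $\tilde S_n(t)$ depends only on $n$.

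The main step is the closed-form evaluation, for $n=2m+1$ odd,
\[
\tilde S_n(t)\;=\;\frac{2^{n-1}\,t^{n-1}}{1-t^2}.
\]
To prove this, I would group the sum by $e$ and perform the geometric series in $\lambda$ starting from $\lambda_{\min}(e)=-\lfloor(m+e)/2\rfloor$ (the smallest integer for which $4\lambda+n+2e\geq 0$). A short computation shows that the leading exponent $4\lambda_{\min}(e)+2(n-1+e)$ takes only two values depending on the parity of $m+e$: it equals $2m$ when $m+e$ is even, and $2m+2$ when $m+e$ is odd. The elementary binomial identity $\sum_{e:\,m+e\text{ even}}\binom{n}{e}=\sum_{e:\,m+e\text{ odd}}\binom{n}{e}=2^{n-1}$ (valid since $n$ is odd) then collapses the sum to $\frac{2^{n-1}(t^{2m}+t^{2m+2})}{1-t^4}$, which simplifies to the claimed expression.

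With the closed form in hand, both recursions follow from straightforward algebra. The $A_{g,n}$ pieces manifestly obey $A_{g,n+2}=(1+t^2)^2 A_{g,n}$ and $A_{g+1,n}=(1+t^3)^2 A_{g,n}$, so they cancel in both recursion differences. For the $B_{g,n}$ pieces, recursion (\ref{genus_g_p_poly_rec_n}) reduces to the polynomial identity $(1+t^2)^2-4t^2=(1-t^2)^2$, and (\ref{genus_g_p_poly_rec_g}) reduces to $(1+t^3)^2-t^2(1+t)^2=(1-t^2)(1-t^4)$; both are immediate. Substituting these in yields precisely the prescribed right-hand sides $2^{n-1}t^{2g+n-1}(1+t)^{2g}$ and $2^{n-1}t^{2g+n-1}(1+t)^{2g}(1+t^2)$.

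The main obstacle is the evaluation of $\tilde S_n(t)$: the appearance of $\lfloor(m+e)/2\rfloor$ in $\lambda_{\min}(e)$ initially obscures the structure of the sum, but the parity-based case split turns it into two standard binomial sums. Everything else is routine manipulation of the Poincar\'e polynomial formula from Theorem \ref{thm_poincare_poly}.
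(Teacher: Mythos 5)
Your proof is correct, and it takes a genuinely different route from the paper's. The paper never evaluates the strata sum directly: it takes the genus-$0$ recursion of Corollary \ref{cor_genus_0_poincare_poly} (itself obtained from the monomial-counting argument of Proposition \ref{prop_genus_0_poly_monomials} plus Poincar\'e duality) as input, solves it for a recursion $S_{0,n+2}=(1+t^2)^2S_{0,n}-2^{n-1}(1-t^2)t^{n-1}$ on the strata sum, transports this to all $g$ via $S_{g,n}=t^{2g}S_{0,n}$ to get (\ref{genus_g_p_poly_rec_n}), and then proves (\ref{genus_g_p_poly_rec_g}) by a separate induction on $n$ with base cases left to the reader. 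You instead compute the strata sum in closed form, $\tilde S_n(t)=2^{n-1}t^{n-1}/(1-t^2)$, by the parity analysis of $\lambda_{\min}(e)=-\floor{(m+e)/2}$ and the binomial identity $\sum_{e\ \mathrm{even}}\binom{n}{e}=\sum_{e\ \mathrm{odd}}\binom{n}{e}=2^{n-1}$; I checked this computation (it reproduces, e.g., $\tilde S_3=4t^2/(1-t^2)$ from the paper's $n=3$ example) and the two polynomial identities $(1+t^2)^2-4t^2=(1-t^2)^2$ and $(1+t^3)^2-t^2(1+t)^2=(1-t^2)^2(1+t^2)$ that finish the argument. What your approach buys: both recursions become simultaneous algebraic consequences of one closed-form evaluation, valid uniformly for all odd $n\geq 1$ with no induction and no base cases, and it in fact delivers Theorem \ref{thm_poincare_poly_closed} directly rather than as a later consequence. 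What the paper's approach buys is that the monomial bookkeeping of Proposition \ref{prop_genus_0_poly_monomials} is needed anyway for the completeness argument in \S 6, so the recursion comes essentially for free once that work is done. The one point worth flagging (which applies equally to the paper's own proof) is the implicit assertion that the index set of destabilizing types is independent of $g$ and that every such type contributes a nonempty stratum; both arguments inherit this from Theorem \ref{thm_poincare_poly} rather than proving it.
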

\begin{proof}
Let $S_{g,n}$ denote the sum $\sum_{\lambda,\ubar e}t^{2d_{\lambda,\ubar e}}$ in (\ref{poincare_poly}). We can solve (\ref{genus_0_poincare_poly}) for an equation giving $S_{0,n+2}$ in terms of $S_{0,n}$:
\begin{equation} \label{strata_sum}
S_{0,n+2}=(1+t^2)^2S_{0,n}-2^{n-1}(1-t^2)t^{n-1}
\end{equation}
Increasing the genus does not change the domain of the sum in (\ref{poincare_poly}), but multiplies it by the factor $t^{2g}$ so that $S_{g,n}=t^{2g}S_{0,n}$. Hence:
\begin{equation} \label{strata_sum_g}
S_{g,n+2}=(1+t^2)^2S_{g,n}-2^{n-1}(1-t^2)t^{n-1}
\end{equation}
holds for all $g$. Plugging this equation into (\ref{poincare_poly}) for arbitrary $g$, the equation (\ref{genus_g_p_poly_rec_n}) follows by some straightforward algebra.
To prove formula (\ref{genus_g_p_poly_rec_g}), we may argue by induction on $n$. Let $\Delta_{g,n}$ denote the difference $P_t(\mathcal{R}_{g+1,n})-(1+t^3)^2P_t(\mathcal{R}_{g,n})$. Suppose that $\Delta_{g,n}=2^{2g+n-1}t^{n-1}(1+t)^{2g}(1+t^2)$. Now, we use (\ref{genus_g_p_poly_rec_n}) to compute $\Delta_{g,n+2}$. We have:
\begin{align*}
\Delta_{g,n+2}=&\;P_t(\mathcal{R}_{g+1,n+2})-(1+t^3)^2P_t(\mathcal{R}_{g,n+2}) \\
			  =&\;(1+t^2)^2\Delta_{g,n}+2^{n-1}t^{2g+n+1}(1+t)^{2g+2}-2^{n-1}t^{2g+n-1}(1+t)^{2g}(1+t^3)^2 \\
			  =&\;2^{n-1}t^{2g+n-1}(1+t)^{2g}\left[(1+t^2)^3+t^2(1+t)^2-(1+t^3)^2\right] \\
			  =&\;2^{n-1}t^{2g+n-1}(1+t)^{2g}\left(4t^2+4t^4\right)=2^{n+1}t^{2g+n+1}(1+t)^{2g}(1+t^2)
\end{align*}
which is exactly what need for $(\ref{genus_g_p_poly_rec_n})$ to hold in general. The base cases of $n=1$ are easy to check and left to the reader.
\end{proof}

\begin{remark}
The formula (\ref{genus_g_p_poly_rec_n}) should be interpreted as saying that the homology of $\mathcal{R}_{g,n+2}$ is the same as that of $\CP^1\times\CP^1\times\mathcal{R}_{g,n}$, with $2^{n-1}$ times the homology of the Jacobian $J(\Sigma)$ added in the middle dimensions. On the other hand, formula (\ref{genus_g_p_poly_rec_g}) says that the homology of $\mathcal{R}_{g+1,n}$ is obtained as the homology of $\SU(2)\times\SU(2)\times\mathcal{R}_{g,n}$ with $2^{n-1}$ times the homology of the genus $g$ Jacbobian crossed with $\mathbb{P}^1$ added in the middle dimensions. Corollary \ref{cor_genus_0_poincare_poly} is strong evidence that the cohomology \emph{ring} is a free algebra on generators $\alpha$, $\beta$, and $\delta_i$, modulo $\delta_i^2=x$ for some $x$, up to the middle dimension, with relations in degree 2 higher than the middle.
\end{remark}

Since both of the formula in Corollary \ref{cor_genus_g_poincare_poly} are essentially geometric series recursions, they can in fact be solved for explicitly. Performing this computation gives:

\begin{thm} \label{thm_poincare_poly_closed}
The Poincar\'e polynomial of $\mathcal{R}_{g,n}$ is given by:
\begin{equation} \label{closed_form_poly}
P_t(\mathcal{R}_{g,n})=\frac{(1+t^2)^n(1+t^3)^{2g}-2^{n-1}t^{2g+n-1}(1+t)^{2g}(1+t^2)}{(1-t^2)(1-t^4)}
\end{equation}
\end{thm}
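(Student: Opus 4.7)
The plan is to verify directly that the rational function on the right-hand side of (\ref{closed_form_poly}), which I denote $Q_t(g, n)$, satisfies both recursions of Corollary \ref{cor_genus_g_poincare_poly}, and then to anchor the resulting induction at a single base pair $(g, n) = (0, 1)$. Because these two recursions together with any one base value determine $P_t(\mathcal{R}_{g, n})$ on the full range of parameters (odd $n \geq 1$, $g \geq 0$), this suffices.

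For the $n$-recursion, clearing the common denominator $(1-t^2)(1-t^4)$ reduces the identity $Q_t(g, n+2) - (1+t^2)^2 Q_t(g, n) = 2^{n-1}t^{2g+n-1}(1+t)^{2g}$ to a polynomial statement whose left side factors as
$$2^{n-1} t^{2g+n-1}(1+t)^{2g}(1+t^2)\bigl[(1+t^2)^2 - 4t^2\bigr] = 2^{n-1}t^{2g+n-1}(1+t)^{2g}(1+t^2)(1-t^2)^2,$$
and the factor $(1-t^2)(1+t^2)$ cancels against $(1-t^2)(1-t^4) = (1-t^2)^2(1+t^2)$ to leave precisely $2^{n-1}t^{2g+n-1}(1+t)^{2g}$. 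The verification of the $g$-recursion is structurally identical and ultimately hinges on the elementary identity $(1+t^3)^2 - t^2(1+t)^2 = 1 - t^2 - t^4 + t^6 = (1-t^2)(1-t^4)$.

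For the base case I take $(g, n) = (0, 1)$: the variety $\mathcal{R}_{0, 1}$ is empty, since a representation would need $T_1 \in \SU(2)$ with $T_1 = -1$ and $\Tr(T_1) = 0$ simultaneously, while the closed form plainly satisfies $Q_t(0, 1) = \bigl((1+t^2) - (1+t^2)\bigr)/\bigl((1-t^2)(1-t^4)\bigr) = 0$. The induction then runs as follows: formula (\ref{genus_g_p_poly_rec_g}) propagates the match from $(0, 1)$ through all $(g, 1)$ with $g \geq 0$, and then, for each such $g$, formula (\ref{genus_g_p_poly_rec_n}) propagates the match from $(g, 1)$ through all odd $n \geq 1$. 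No step presents a genuine obstacle: the recursions are linear inhomogeneous with coefficients independent of the recursion index, so the closed form is essentially forced by solving them, and all that remains is routine polynomial algebra to confirm the two factorizations above.
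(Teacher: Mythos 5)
Your proposal is correct and follows essentially the same route as the paper, which likewise obtains (\ref{closed_form_poly}) by solving the two first-order recursions of Corollary \ref{cor_genus_g_poincare_poly} (anchored at the easily checked $n=1$ base cases, e.g.\ $\mathcal{R}_{0,1}=\emptyset$); your verification of the two polynomial identities and the observation $(1+t^3)^2-t^2(1+t)^2=(1-t^2)(1-t^4)$ are exactly the computations implicit there. The only nit is cosmetic: in the $n$-recursion what cancels against the numerator is the full denominator $(1-t^2)^2(1+t^2)$, not just $(1-t^2)(1+t^2)$.
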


\begin{cor} \label{cor_rank_degree_2}
$H^2(\mathcal{R}_{g,n})=n+1$
\end{cor}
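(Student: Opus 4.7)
The plan is to extract the coefficient of $t^2$ directly from the closed-form Poincar\'e polynomial in Theorem \ref{thm_poincare_poly_closed}. Write
$$P_t(\mathcal{R}_{g,n})=\frac{N(t)}{(1-t^2)(1-t^4)},\qquad N(t)=(1+t^2)^n(1+t^3)^{2g}-2^{n-1}t^{2g+n-1}(1+t)^{2g}(1+t^2).$$
The first observation is that the subtracted term in $N(t)$ begins in degree $2g+n-1$, which is $\geq 3$ in the range where the statement has content (namely, when $\dim\mathcal{R}_{g,n}=6g-6+2n\geq 4$); in this regime it contributes nothing to $N(t)$ through degree $2$ and can be discarded when computing low-degree coefficients.

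The principal term $(1+t^2)^n(1+t^3)^{2g}$ expands as $1+nt^2+O(t^3)$, since $(1+t^3)^{2g}=1+O(t^3)$ is invisible below degree $3$. Combining, $N(t)\equiv 1+nt^2\pmod{t^3}$. Next I invert the denominator as a power series,
$$\frac{1}{(1-t^2)(1-t^4)}=1+t^2+2t^4+\cdots,$$
and multiply: the coefficient of $t^2$ in $P_t(\mathcal{R}_{g,n})$ receives contributions $1\cdot t^2$ (from the constant term of $N$ and the linear-in-$t^2$ term of the inverse) and $nt^2\cdot 1$ (from the $nt^2$ term of $N$ and the constant term of the inverse), giving a total of $n+1$.

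Finally, to conclude that this coefficient equals $\dim H^2(\mathcal{R}_{g,n};\QQ)$, I would verify that $P_t(\mathcal{R}_{g,n})$ has no $t^1$ term: since $N(t)\equiv 1+nt^2\pmod{t^3}$ has no $t^1$ coefficient and the denominator is a polynomial in $t^2$ (so its power-series inverse is as well), the quotient is free of $t^1$ contributions. There is no substantive obstacle here; the corollary is a short power-series bookkeeping exercise given the explicit formula (\ref{closed_form_poly}).
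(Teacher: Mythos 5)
Your proposal is correct and is exactly the computation the paper intends: Corollary \ref{cor_rank_degree_2} is stated without proof as an immediate consequence of the closed form (\ref{closed_form_poly}), obtained by reading off the coefficient of $t^2$. Your explicit bookkeeping, including the observation that the subtracted term only enters in degree $2g+n-1\geq 3$ once $\dim\mathcal{R}_{g,n}\geq 4$ (a caveat the paper leaves implicit, and which is genuinely needed, since e.g.\ $\mathcal{R}_{0,3}$ is a point), matches the intended argument.
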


\begin{example}
If we set $g=0$ but let $n=5$, we expect a 4-dimensional moduli space. By Corollary \ref{cor_genus_0_poincare_poly}:
$$P_t(\mathcal{R}_{0,5})=(1+t^2)^2P_t(\mathcal{R}_{0,3})+4t^2=1+6t^2+t^4$$
(or more simply we could just use Corollary \ref{cor_rank_degree_2}). As noticed by Boden in \cite{boden_parabolic}, the paper \cite{kirk_klassen} proves that the only possibility for a four-dimensional representation variety (in genus 0) with $b_2=6$ is $\CP^2\#5\obar{\CP}^2$. This follows by considering the structure of a toric variety on these moduli spaces.
\end{example}
%
%
%
%

\section{Canonical Line Bundles on $\mathcal{R}_{g,n}$} \label{section_line_bundles}

In this section we describe a natural collection of line bundles over $\mathcal{R}_{g,n}$ as well as explicit submanifolds of the moduli space whose Poincar\'e duals can be used to write down the first Chern classes of these line bundles. These line bundles and submanifolds are exactly of the type studied in \cite{weitsman_cohom}, though the assumptions there on the genericity of parabolic weights and methods of symplectic reduction preclude the full application of his results to our case. Specifically, setting all the weights to be $1/4$ makes the calculation of the Poincar\'e duals of the first Chern classes of the line bundles somewhat more tedious, and the inductive technique in that paper of reducing the number of parabolic points by one fails, because we require $n$ to be odd. However, we give an analogous scheme by which $n$ can be reduced by two. The resulting recursive description of $\mathcal{R}_{g,n}$ will eventually allow us to completely write down the cohomology ring for $g=0$, using these Chern classes as generators. We first define these line bundles, and show that their first Chern classes are naturally identified with a subset of the generating set of classes found in \S\ref{subsection_univ_bundles}.

\subsection{Line bundles $V_k$ and submanifolds $D^\pm_{k,l}$}

The line bundles we want to study are those associated to explicit $\U(1)$ bundles over $\mathcal{R}_{g,n}$. Recall our standard generating set $\{a_j,d_k,\zeta\}$ (for $j=1,\ldots,2g$ and $k=1,\ldots,n$) with $\zeta^2=1$ for the central extension $\wh\Gamma$ of the fundamental group of $\Sigma^*$. The unreduced representation space $\wt{\mathcal{R}}$ consists of all maps $\wh\Gamma\to\SU(2)$, with a trace condition on the $d_k$ and with $\zeta$ mapping to $-1$. Viewing $\wt{\mathcal{R}}_{g,n}$ as sitting in $\SU(2)^{2g+n}$, inside of $\wt{\mathcal{R}}_{g,n}$ we have, for each $k$, a subspace
$$V_k:=\{(S_1,\ldots,S_{2g},T_1,\ldots,T_n)\in\wt{\mathcal{R}}_{g,n}|\text{ }T_k=\mathbf{i}\}$$
where the $k$ parabolic coordinate is exactly $\mathbf{i}$. Any representation in $\wt{\mathcal{R}}_{g,n}$ can be conjugated to one in $V_k$, so the quotient map $V_k\to\mathcal{R}_{g,n}$ is surjective. A fiber can be identified with the stabilizer of $\mathbf{i}$: if $\rho\in V_k$ maps to $\obar\rho$ in $\mathcal{R}_{g,n}$, any other element of the preimage of $\obar\rho$  arises by conjugating $\rho$, but to stay in $V_k$ the conjugating element $g\in\SU(2)$ must fix $\mathbf{i}$. This stabilizer is the one parameter subgroup $S^1_\mathbf{i}$ through $\mathbf{i}$. The action of $S^1_\mathbf{i}$ on the fiber decends to a free one of the quotient $S^1_\mathbf{i}/\{\pm 1\}$; both are isomorphic to the group $\U(1)$, so
$$V_k\to\mathcal{R}_{g,n}$$
becomes a principal $\U(1)$-bundle.

For each pair $k,l$, let we define the subspace $D^\pm_{k,l}\subset\mathcal{R}_{g,n}$:
$$D^\pm_{k,l}=\{[\ubar S,\ubar T]\;|\;T_k=\pm T_l\}.$$
The union $D^+_{k,l}\cup D^-_{k,l}$ is exactly the locus where $T_k$ and $T_l$ commute. It is not difficult to see that the spaces $D^\pm_{k,l}$ are smooth, orientable, connected, real codimension 2 submanifolds of $\mathcal{R}_{g,n}$. It is a careful study of these spaces and their intersections which gives us a great deal of information about the cohomology ring of $\mathcal{R}_{g,n}$.

The first step, as seen in \cite{weitsman_cohom}, is to notice the line bundles $V_k$ and the submanifolds $D_{k,l}^\pm$ are intimately related.
\begin{lemma} \label{V_k_trivialization}
The $\U(1)$ bundle $V_k$ is trivial on the complement of $D^+_{k,l}\sqcup D^-_{k,l}$ in $\mathcal{R}_{g,n}$, for any $l\neq k$.
\end{lemma}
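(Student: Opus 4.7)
The plan is to exhibit an explicit $\U(1)$-equivariant trivialization of $V_k$ over the complement $U := \mathcal{R}_{g,n} \setminus (D^+_{k,l} \cup D^-_{k,l})$. Since $V_k \to \mathcal{R}_{g,n}$ is a principal $\U(1)$-bundle, it suffices to produce a continuous $\U(1)$-equivariant map $V_k|_U \to \U(1)$, or equivalently a continuous global section.

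First, I would make the residual $\U(1)$-action on $V_k$ concrete. A point of $V_k$ satisfies $T_k = \mathbf{i}$, and because $\trace T_l = 0$ we may write $T_l = a\mathbf{i} + b\mathbf{j} + c\mathbf{k}$ with $a^2+b^2+c^2=1$. A direct quaternion computation shows that conjugation by $e^{\theta\mathbf{i}} \in S^1_\mathbf{i}$ fixes $T_k$ and the scalar $a$, while it acts on the complex number $b+ci$ by multiplication by $e^{2i\theta}$. Under the identification $S^1_\mathbf{i}/\{\pm 1\} \cong \U(1)$ given by $[e^{\theta\mathbf{i}}] \mapsto e^{2i\theta}$ (well-defined because $[e^{\pi\mathbf{i}}]$ maps to $e^{2\pi i} = 1$), this is precisely the standard $\U(1)$-action on $\CC$.

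Next I would identify $U$ as exactly the locus over which the natural section is defined. Restricted to $V_k$, the condition $T_k = \pm T_l$ becomes $T_l = \pm\mathbf{i}$, equivalently $(b,c) = (0,0)$. Therefore, over $U$, the formula
\begin{equation*}
\phi(\ubar S, \ubar T) := \frac{b + ci}{\sqrt{b^2 + c^2}}
\end{equation*}
defines a smooth, $\U(1)$-equivariant map $\phi: V_k|_U \to \U(1)$ by the previous paragraph, and together with the bundle projection this furnishes the desired trivialization $V_k|_U \cong U \times \U(1)$; equivalently, $\phi^{-1}(1)$ is the canonical section consisting of those representatives for which $c=0$ and $b>0$.

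There is no substantial obstacle: the argument is essentially a direct observation once the right section is written down. The only bookkeeping point requiring care is the factor of $2$ in the identification $[e^{\theta\mathbf{i}}] \mapsto e^{2i\theta}$ --- it is exactly this factor that makes the conjugation action on $b + ci$ match the tautological $\U(1)$-action on $\CC^\times$, and hence that makes $\phi$ genuinely equivariant rather than equivariant up to a double cover.
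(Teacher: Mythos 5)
Your proof is correct and takes essentially the same approach as the paper: the paper exhibits the section directly as the unique representative with $T_k=\mathbf{i}$, vanishing $\mathbf{k}$-component of $T_l$, and positive $\mathbf{j}$-component, which is exactly your $\phi^{-1}(1)$. Your version merely makes the $\U(1)$-equivariance (and the factor of $2$ in identifying $S^1_{\mathbf{i}}/\{\pm 1\}$ with $\U(1)$) explicit, which the paper leaves implicit.
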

\begin{proof}
We shall describe an explicit section $\mathcal{R}_{g,n}\to V_k$ outside $D^+_{k,l}\sqcup D^-_{k,l}$. Let $[\ubar S,\ubar T]\in\mathcal{R}_{g,n}$ be such that $T_k$ and $T_l$ do not commute. It is not hard to see that there is a unique representative of $[\ubar S,\ubar T]$ for which $T_k=\mathbf{i}$, the $\mathbf{k}$-component of $T_l$ is 0, and the $\mathbf{j}$-component of $T_l$ is positive, assuming $T_k$ and $T_l$ are not equal or antipodal. Perhaps the best intuition for this comes from geography: there is a unique oriented orthogonal transformation of the earth which maps a given point to the north pole and any other point not the south pole to lie on the Prime Meridian. This recipe gives a unique representative $\ubar T$ for any given conjugacy class, and this assignment is certainly continuous in $\mathcal{R}_{g,n}$, so gives a section.
\end{proof}

\begin{cor} \label{V_k_poincare_dual}
Suppose $n\geq 3$, and let $[D^\pm_{k,l}]$ denote the homology classes associated to $D^\pm_{k,l}$ with some choice of orientations. Then there are integers $r,s$ such that
\begin{equation}
PD(c_1(V_k))=r[D^+_{k,l}]+s[D^-_{k,l}]
\end{equation}
\end{cor}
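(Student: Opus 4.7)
The strategy is to promote Lemma \ref{V_k_trivialization} into a Poincar\'e duality statement by localizing the class $c_1(V_k)$ on the codimension 2 locus $D^+_{k,l}\cup D^-_{k,l}$. First I would verify that $D^+_{k,l}$ and $D^-_{k,l}$ are disjoint: if $[\ubar S,\ubar T]$ lay in both, then $T_k=T_l$ and $T_k=-T_l$, forcing $2T_l=0$; since $T_l\in\SU(2)$, this is impossible. Combined with the assertion in the text that each $D^\pm_{k,l}$ is a smooth, connected, orientable real codimension 2 submanifold, this means $N:=D^+_{k,l}\sqcup D^-_{k,l}$ is an embedded (not just immersed) closed submanifold with two connected components, each carrying a well-defined (oriented) fundamental class.

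Next I would set $U:=\mathcal{R}_{g,n}\setminus N$ and consider the long exact sequence of the pair $(\mathcal{R}_{g,n},U)$ in cohomology. By Lemma \ref{V_k_trivialization} the restriction $V_k|_U$ is trivial, so $c_1(V_k)|_U=0$ in $H^2(U;\ZZ)$, and therefore $c_1(V_k)$ lifts to a class $\tilde c\in H^2(\mathcal{R}_{g,n},U;\ZZ)$. Now apply Poincar\'e--Lefschetz duality, using excision onto tubular neighborhoods of the two components of $N$: with $d=\dim_\RR\mathcal{R}_{g,n}$,
\begin{equation*}
H^2(\mathcal{R}_{g,n},U;\ZZ)\cong H^2_c(\nu(N);\ZZ)\cong H_{d-2}(N;\ZZ),
\end{equation*}
where the second isomorphism is the Thom isomorphism for the normal bundles of $D^\pm_{k,l}$ (each of rank 2 and orientable, since both $\mathcal{R}_{g,n}$ and the submanifolds are orientable). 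Since each component is connected of dimension $d-2$, we have $H_{d-2}(N;\ZZ)=\ZZ[D^+_{k,l}]\oplus\ZZ[D^-_{k,l}]$.

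Pushing $\tilde c$ forward to $\mathcal{R}_{g,n}$ gives $c_1(V_k)\in H^2(\mathcal{R}_{g,n};\QQ)$ as the Poincar\'e dual of an integral class supported on $N$, which is then of the stated form $r[D^+_{k,l}]+s[D^-_{k,l}]$ for some $r,s\in\ZZ$. The integers $r,s$ are precisely the degrees of the line bundle $V_k$ restricted to small normal 2-disks transverse to $D^+_{k,l}$ and $D^-_{k,l}$ respectively, i.e.\ local winding numbers of the section constructed in the proof of Lemma \ref{V_k_trivialization} as it degenerates at the two loci.

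The main obstacle is ensuring the Thom isomorphism applies cleanly: one needs $D^\pm_{k,l}$ to be genuine \emph{closed} submanifolds with orientable normal bundles, and for the representative section in Lemma \ref{V_k_trivialization} to extend smoothly across the strata so that its zero set is well-defined. Connectedness of each $D^\pm_{k,l}$ (which I would justify using the fibration structure hinted at later in the paper, where $D^\pm_{k,l}$ fibers over $\mathcal{R}_{g,n-2}$ with $2$-sphere fibers) is essential so that a single integer records the Chern class on each piece; once these topological hypotheses are in place, the rest is standard intersection theory.
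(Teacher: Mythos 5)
Your proposal is correct and is exactly the standard localization argument that the paper leaves implicit when it states this as a corollary of Lemma \ref{V_k_trivialization}: triviality of $V_k$ off the disjoint closed codimension-2 locus $D^+_{k,l}\sqcup D^-_{k,l}$ lifts $c_1(V_k)$ to a relative class, which Thom/Poincar\'e--Lefschetz duality identifies with an integral combination of the two fundamental classes. Your closing remark that $r,s$ are the local winding numbers of the trivializing section is also precisely how the paper later pins down $\abs{r}=\abs{s}=1$ in Proposition \ref{prop_values_r_s}.
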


This raises the question of what happens when $n=1$. The author can find no obvious projection $\mathcal{R}_{g,1}\to\mathcal{R}_{g,0}$ by studying the representation varieties. Nonetheless, such a projection map is easily defined by studying the correpsonding moduli spaces of stable parabolic bundles. Suppose $\mathcal{E}$ is a stable parabolic bundle over $\Sigma$ with one parabolic point $x_1$. Assuming $\deg\mathcal{E}=-1$ (which we are free to do by tensoring with a line bundle of degree $-1$), we have $\deg^\PAR\mathcal{E}=0$. We claim that $\mathcal{E}$ is actually stable as a nonparabolic bundle. Indeed, if $\mathcal{L}$ were a (nonparabolic) destabilizing line bundle, then we must have $\deg\mathcal{L}>-\tfrac{1}{2}$ and so $\deg\mathcal{L}\geq 0$. But then $\deg^\PAR\mathcal{L}\geq\tfrac{1}{4}$, so $\mathcal{L}$ would be a parabolic destabilizing line bundle for the parabolic bundle $\mathcal{E}$, which is a contradiction. Hence, we have a forgetful map
$$f:\mathcal{M}^0_{g,1}\to\mathcal{M}^0_{g,0}$$
It is a surjective morphism of projective algebraic varieties, whose fiber is the $\mathbb{P}^1$ coming from the choice of parabolic line $F_1\subset\mathcal{E}_{x_1}$.

In fact what this shows is that when $n=1$, $\Cs\subset\mathcal{C}$ actually parametrizes stable nonparabolic holomorphic structure on $E\to\Sigma$ as well. To get $\mathcal{M}_{g,0}$, we simply take the quotient by the larger (nonparabolic) gauge group $\GC$. The parabolic gauge group $\GCp$ is a subgroup of $\GC$ and the coset space $\GC/\GCp$ is a $\mathbb{P}^1$. After passing to the fixed determinant subspaces, this realizes the $\mathbb{P}^1$ bundle
$$\mathcal{M}^0_{g,1}=\Cs/\GCp\to\Cs/\GC=\mathcal{M}^0_{g,0}.$$
Let us study the corresponding universal bundles. Over $\mathcal{M}^0_{g,1}\times\Sigma$ we have $\mathbb{P}_{g,1}$ and section $\mathbf{s}_k$ over $x_1\in\Sigma$, constructed as a quotient of $\mathbb{P}(\pi^*E)$ over $\Cs\times\Sigma$. Over $\mathcal{M}^0_{g,0}$ we have $\mathbb{P}_{g,0}$. There is an obvious fiberwise map $\mathbb{P}_{g,1}\to\mathbb{P}_{g,0}$ covering the bundle map $\mathcal{M}^0_{g,1}\times\Sigma\to\mathcal{M}^0_{g,0}\times\Sigma$ which explicitly realizes the first as isomorphic to the pullback of the second. It is straightforward to check also that for the $\SO(3)$ universal bundle $\mathbf{E}^\ad(g,n)$ over $\mathcal{R}_{g,n}\times\Sigma$, we have
$$\mathbf{E}^\ad_{g,1}\cong f^*(\mathbf{E}^\ad_{g,0}).$$

Before tackling the computation of the constants $r,s$ in Corollary \ref{V_k_poincare_dual}, we first relate the line bundles $V_k$ to the universal bundle constructions from \S\ref{subsection_univ_bundles} and make a few notes on the symmetry inherent in our setup.

\subsection{The Universal Bundle and $V_k$'s} \label{subsection_univ_bundles_and_V_ks}

We would like to identify $V_k$ with natural bundles arising from the universal pair over $\mathcal{R}_{g,n}\times\Sigma$. We recall the universal bundle pair $(\mathbf{E}^\ad,\{\mathbf{V}_k\})$ over $\mathcal{R}_{g,n}\times\Sigma$, constructed as the quotient of a pullback pair $(\pi^*\Ad E,\{\pi^*H_k\})$ by the gauge group $\mathcal{G}$. Fix a basepoint $z\in\Sigma^*$ away from the punctures, and consider the subgroup $$\mathcal{G}_z\subset\mathcal{G},$$
called the ``based'' gauge group, of gauge transformations which are the identity at $z$. The quotient $\mathcal{G}/\mathcal{G}_z$ is isomorphic to $\SU(2)$, and $\mathcal{A}_\text{flat}/\mathcal{G}_z$ is naturally an $\SO(3)$ bundle over $\mathcal{R}_{g,n}=\mathcal{A}_\text{flat}/\mathcal{G}$. Given a connection $A$ on $\Ad E$, we write $[A]$ for the $\mathcal{G}$ equivalence class, and $\langle A\rangle_z$ for the equivalence class modulo $\mathcal{G}_z$. 
\begin{lemma} \label{partial_quotient}
The residual $\SO(3)$ action on the quotient $\mathcal{A}_\text{flat}/\mathcal{G}_z$ is isomorphic as a principal bundle to $\wt{\mathcal{R}}_{g,n}\to\mathcal{R}_{g,n}$.
\end{lemma}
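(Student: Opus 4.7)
The plan is to construct an explicit $\SO(3)$-equivariant diffeomorphism $\Psi \colon \mathcal{A}_\text{flat}/\mathcal{G}_z \to \wt{\mathcal{R}}_{g,n}$ covering the identity on $\mathcal{R}_{g,n}$, using the holonomy construction that was already developed earlier in the section when identifying $\mathcal{B}_\text{flat}$ with $\mathcal{R}_{g,n}$. Concretely, given $A \in \mathcal{A}_\text{flat}$, form the unique $\SU(2)$-lift $\su(A)$ on $\Sigma \setminus \{z_0\}$ using the fixed trivialization of $E$ there, and take the tuple of holonomies of $\su(A)$ based at $z$ along the standard generators $a_1,\ldots,a_{2g},d_1,\ldots,d_n$ of $\wh\Gamma$. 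The preceding discussion in Section~\ref{subsec_mod_space} already establishes that this tuple $(\ubar S, \ubar T)$ satisfies the defining relation $\prod_j[S_{2j-1},S_{2j}]\prod_k T_k = -1$ and the trace conditions $\Tr(T_k)=0$, so it lies in $\wt{\mathcal{R}}_{g,n}$.

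Next I would verify the two required invariances. If $g \in \mathcal{G}_z$ and $A' = g \cdot A$, then the holonomies of $\su(A')$ are obtained from those of $\su(A)$ by conjugation by $g(z) = \Id$, so they are unchanged; this shows $\Psi$ descends to the quotient by $\mathcal{G}_z$. For a general $g \in \mathcal{G}$, the class of $g$ in $\mathcal{G}/\mathcal{G}_z \cong \SU(2)$ is represented by $g(z)$, and the holonomies of $\su(g\cdot A)$ are conjugated by $g(z)$; this is exactly the conjugation action of $\SU(2)$ on $\wt{\mathcal{R}}_{g,n}$ from Definition~\ref{def_rep_variety}. Since the central $\pm\Id \subset \mathcal{G}$ acts trivially both on $\mathcal{A}_\text{flat}$ and on $\wt{\mathcal{R}}_{g,n}$ by conjugation, the action on either side factors through $\SO(3) = \PU(2)$ and $\Psi$ is $\SO(3)$-equivariant.

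For bijectivity, surjectivity follows from the standard fact that every homomorphism $\wh\Gamma \to \SU(2)$ with the prescribed trace data is the based holonomy of some flat $\SU(2)$ connection on $E|_{\Sigma\setminus\{z_0,x_1,\ldots,x_n\}}$, which, after passing to $\Ad$ and applying an element of $\mathcal{G}$ to put it into the standard form near each puncture required by (\ref{conn_space}), yields a preimage in $\mathcal{A}_\text{flat}$. Injectivity is the classical statement that two flat connections with identical based holonomy differ by a gauge transformation fixing the fiber at $z$, i.e.\ by an element of $\mathcal{G}_z$. Finally, freeness of the $\SO(3)$-action on both sides comes from irreducibility: on $\wt{\mathcal{R}}_{g,n}$ this is the genericity statement following Definition~\ref{def_rep_variety} that no sum $\sum\epsilon_k/4$ is an integer for $n$ odd; the same fact guarantees freeness on $\mathcal{A}_\text{flat}/\mathcal{G}_z$. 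A smooth equivariant bijection of principal $\SO(3)$-bundles is an isomorphism, giving the claim.

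The main obstacle is bookkeeping rather than substance: one must carefully track the compatibility of the two different trivialization data (the one at $z$ underlying the definition of $\mathcal{G}_z$ and the one on $\Sigma\setminus\{z_0\}$ underlying the $\SU(2)$-lift $\su(A)$), and be precise about how the central $\pm \Id$ acts, so that the $\mathcal{G}/\mathcal{G}_z \cong \SU(2)$ action on the left is correctly matched with the $\SU(2)/\{\pm 1\}=\SO(3)$ action by conjugation on the right. Everything else reduces to the holonomy correspondence already used to identify $\mathcal{B}_\text{flat}$ with $\mathcal{R}_{g,n}$.
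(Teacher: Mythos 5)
Your proposal is correct and follows essentially the same route as the paper: both define the map $\langle A\rangle_z\mapsto\hol_z(\su(A))$ via the based holonomy of the $\SU(2)$-lift, observe that $\mathcal{G}_z$ fixes the fiber at $z$ so the holonomy representation (not just its conjugacy class) is well defined, and conclude that this is an $\SO(3)$-equivariant fibered bijection. Your write-up simply fills in the surjectivity, injectivity, and freeness details that the paper leaves as standard.
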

\begin{proof}
We define an explicit map
$$\mathcal{A}_\text{flat}/\mathcal{G}_z\to\wt{\mathcal{R}}_{g,n}$$
using the holonomy of flat connections. The point is that by modding out by the based gauge group, we can still get from a flat connection its true holonomy representation and not just its conjugacy class. To wit, recall we have fixed an auxillary point $z_0$ near $z$ and trivialization of $E$ outside $z_0$, so that a flat connection $A$ on $\Ad E$ gives rise to a flat $\SU(2)$ connection $\su(A)$ away from $z_0$. This gives a homomorphism
$$\text{hol}_z(\su(A)):\wh\Gamma\to\SU(2).$$
The based gauge group preserves the holonomy up to conjugation by the local group $(\mathcal{G}_z)|_z$, which is trivial by definition, and so $\text{hol}_z(\su(A))$ is independent of the based gauge representative. We therefore get a map $\langle A\rangle_z\mapsto\hol_z(\su(A))$ which is a fibered bijection between the bundles, and clearly $\SO(3)$-equivariant.
\end{proof}
\begin{lemma} \label{equiv_bundles_over_R}
Let $W$ be the rank three $\RR$ vector bundle associated to the principal $\SO(3)$ bundle $\wt{\mathcal{R}}_{g,n}\to\mathcal{R}_{g,n}$. Then for any $z\in\Sigma^*$ away from the punctures, $W$ is isomorphic to $\mathbf{E}^\ad|_{\mathcal{R}_{g,n}\times\{z\}}$.
\end{lemma}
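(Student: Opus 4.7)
The plan is to present both $W$ and $\mathbf{E}^\ad|_{\mathcal{R}_{g,n}\times\{z\}}$ as associated bundles to a common principal bundle, by realizing each as a quotient of $\mathcal{A}_\text{flat}\times\Ad E_z$ by the gauge group $\mathcal{G}$ performed in two different orders. The fixed trivialization of $E$ near $z$ (which is in the complement of $z_0$) identifies $\Ad E_z$ with $\so(3)\cong\RR^3$ and will be used to match up the various group actions.

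First I would unpack the universal bundle construction to see that
$$\mathbf{E}^\ad|_{\mathcal{R}_{g,n}\times\{z\}}\cong(\mathcal{A}_\text{flat}\times\Ad E_z)/\mathcal{G},$$
where $g\in\mathcal{G}$ acts by $g\cdot(A,t)=(g\cdot A,\Ad(g_z)t)$ and the central subgroup $\{\pm 1\}$ acts trivially on both factors. Under the trivialization near $z$, the action on the second factor becomes the adjoint action of $\SU(2)$ on $\so(3)$, which factors through $\SO(3)$.

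Next I would take this quotient in two stages. Since elements of $\mathcal{G}_z$ are by definition the identity at $z$, they act trivially on the $\Ad E_z$ factor, so the first stage produces
$$(\mathcal{A}_\text{flat}/\mathcal{G}_z)\times\Ad E_z.$$
By Lemma \ref{partial_quotient}, the first factor is isomorphic as a principal $\SO(3)$-bundle over $\mathcal{R}_{g,n}$ to $\wt{\mathcal{R}}_{g,n}$. The remaining quotient by the residual group $\mathcal{G}/\mathcal{G}_z\cong\SU(2)$ factors through $\SO(3)$ (since $-1\in\mathcal{G}$ acts trivially on both factors), and acts on $\so(3)$ by the standard adjoint representation. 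The second stage therefore produces the associated bundle
$$\wt{\mathcal{R}}_{g,n}\times_{\SO(3)}\so(3),$$
which is by definition $W$.

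The main bookkeeping step is to check that the $\SO(3)$-equivariant isomorphism $\mathcal{A}_\text{flat}/\mathcal{G}_z\cong\wt{\mathcal{R}}_{g,n}$ of Lemma \ref{partial_quotient}, defined by $\langle A\rangle_z\mapsto\hol_z(\su(A))$, is compatible with the evaluation-at-$z$ homomorphism $\mathcal{G}\to\SU(2)$ used to identify the residual group with $\SO(3)$. This reduces to the standard observation that a gauge transformation $g$ conjugates the based holonomy at $z$ by $g_z$, so the two $\SO(3)$-actions match on the nose. I do not expect any real obstacle here; the proof is essentially an exercise in commuting two quotient operations, with all compatibilities traceable to the single trivialization of $E$ near $z$.
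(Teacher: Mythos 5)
Your proposal is correct and is essentially the paper's argument in different packaging: the paper writes down the induced map $\mathcal{G}\cdot(t,A)\mapsto\obar{(\hol_z(\su(A)),t)}$ directly and checks gauge-invariance, which is exactly the map your two-stage quotient produces. The key verification is the same in both — a gauge transformation $g$ conjugates the based holonomy and the fiber element $t$ by the same $g_z$, so the residual $\SO(3)$-actions match.
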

\begin{proof}
By definition, $W$ is the bundle $\wt{\mathcal{R}}_{g,n}\times_{\SO(3)}\so(3)$, and a vector in the total space of the restriction of $\mathbf{E}^\ad$ to the $\mathcal{R}_{g,n}$ slice through $z$ is the $\mathcal{G}$ orbit of a pair $(t,A)$ where $t\in(\Ad E)_z$. A map $\mathbf{E}^\ad\to W$ over the $\mathcal{R}_{g,n}$ slice can be defined by
$$\mathcal{G}\cdot(t,A)\mapsto\obar{(\text{hol}_z(\su(A)),t)}$$
where $t$ is an element of $\Ad E_z\cong\so(3)$. Independence on the choice of representative $(t,A)$ is seen by a straightforward unwinding of the definitions, which we carry out for completeness. For $g\in\mathcal{G}$, we must show that $(\text{hol}_z(\su(g\cdot A)\rangle),g_z\circ t\circ g_z^{-1})$ and $(\text{hol}_z(\langle A\rangle_z),t)$ are $\SO(3)$-equivalent. Indeed: the holonomy of $\su(g\cdot A)$ is that of $\su(A)$ conjugated by $g_z$, so these two pairs are identified by the action of the element $\obar g_z\in\SO(3)$. The map is certainly a linear isomorphism on fibers.
\end{proof}

When the basepoint $z$ is near an $x_k$ (say $s_k(z)<3/4$ for some $k$), we get a different kind of based gauge group which we shall denote by $\mathcal{G}_k$, consisting of gauge transformations which are the identity near the $k$th puncture. Since the original gauge group $\mathcal{G}$ was only allowed to take values $\U(1)\subset\SU(2)$ near the punctures, the quotient $\mathcal{G}/\mathcal{G}_k$ is isomorphic to $\U(1)$. Hence, the quotient $\mathcal{A}_\text{flat}/\mathcal{G}_k$ is a principal $\U(1)$ bundle over $\mathcal{R}_{g,n}$. Let $\langle A\rangle_k$ denote the $\mathcal{G}_k$ equivalence class of $A$. We now finally bring the discussion back to our line bundles $V_k$ defined at the beginning of the section. 
\begin{lemma} \label{partial_U1_quotient}
The $\U(1)$ bundle $\mathcal{A}_\text{flat}/\mathcal{G}_k\to\mathcal{R}_{g,n}$ is isomorphic to the bundle $V_k\to\mathcal{R}_{g,n}$.
\end{lemma}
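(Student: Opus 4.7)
The approach is to mimic the construction in Lemma \ref{partial_quotient}, replacing the based gauge group $\mathcal{G}_z$ (based at an interior point) with $\mathcal{G}_k$ (trivial near the $k$th puncture). The key observation is that the constraint in the definition (\ref{gauge_G}) of $\mathcal{G}$ forces gauge transformations to take values in the diagonal $\U(1)\subset\SU(2)$ near $x_k$, so that $\mathcal{G}/\mathcal{G}_k$ is naturally a circle rather than an $\SO(3)$, and the residual action on $\mathcal{A}_\text{flat}/\mathcal{G}_k$ is by this $\U(1)$.

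Concretely, choose a basepoint $z$ with $s_k(z)\geq 1/2$, so that each $A\in\mathcal{A}_\text{flat}$ is in the standard form (\ref{conn_space}) at $z$. As in the construction preceding Lemma \ref{partial_quotient}, form the $\SU(2)$ lift $\su(A)$ using the fixed trivialization of $E$ on $\Sigma\setminus\{z_0\}$ and take holonomies based at $z$ around the standard generators of $\wh\Gamma$ to obtain a representation $\rho_A:\wh\Gamma\to\SU(2)$. Because the connection near $x_k$ is prescribed by (\ref{conn_space}) and the trivialization was chosen so that $F_k$ spans $(1,0)$, a direct exponentiation gives
$$\rho_A(d_k)=\exp\bigl(2\pi\cdot\tfrac{1}{4}\diag(i,-i)\bigr)=\diag(i,-i),$$
which is exactly $\mathbf{i}\in\SU(2)$. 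Thus $\rho_A\in V_k$, and the assignment defines a map $\Phi:\mathcal{A}_\text{flat}\to V_k$. Since elements of $\mathcal{G}_k$ are the identity near $x_k$ and hence at $z$, they do not change the based holonomy; so $\Phi$ factors through a map $\bar\Phi:\mathcal{A}_\text{flat}/\mathcal{G}_k\to V_k$ covering the diffeomorphism $\mathcal{B}_\text{flat}\cong\mathcal{R}_{g,n}$. That $\bar\Phi$ is a fiberwise bijection follows from the same argument as in Lemma \ref{partial_quotient}: pinning down the based holonomy (rather than just its conjugacy class) pins down the connection up to $\mathcal{G}_k$.

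The remaining content, and where the subtlety lies, is verifying that $\bar\Phi$ is equivariant for the $\U(1)$ actions on both sides. A gauge transformation $g\in\mathcal{G}$ has the form $g(s_k,\theta_k)=\diag(w,\bar w)$ for $s_k\geq 1/2$, and its class $[g]\in\mathcal{G}/\mathcal{G}_k$ is determined by the value $w$. The action on holonomies is conjugation by $g(z)=\diag(w,\bar w)$; since this element commutes with $\diag(i,-i)$ it stabilizes $T_k=\mathbf{i}$, and on the remaining generators it acts via $S^1_\mathbf{i}\subset\SU(2)$, matching the $S^1_\mathbf{i}/\{\pm 1\}\cong\U(1)$ action on $V_k$. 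The quotient by $\{\pm 1\}$ on the $V_k$ side corresponds precisely to the fact that the constant central automorphisms $\pm 1\in\mathcal{G}$ (which are \emph{not} in $\mathcal{G}_k$) act trivially on $\mathcal{A}_\text{flat}$, so the induced $\U(1)$ action on $\mathcal{A}_\text{flat}/\mathcal{G}_k$ factors through $(\mathcal{G}/\mathcal{G}_k)/\{\pm 1\}\cong\U(1)$.

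The main obstacle is bookkeeping: one must carefully match the two $\U(1)$ quotients, confirm the sign conventions so that the generator of $\mathcal{G}/\mathcal{G}_k$ is sent to a specific generator of $S^1_\mathbf{i}/\{\pm 1\}$, and check that this is consistent with the chosen trivialization near $x_k$ (in particular, with the ambiguity in lifting the $\SO(3)$ holonomy to $\SU(2)$ that was handled globally by the choice of trivialization away from $z_0$). Once these identifications are made, $\bar\Phi$ is manifestly a $\U(1)$-equivariant fiberwise bijection covering the identity on $\mathcal{R}_{g,n}$, hence an isomorphism of principal $\U(1)$-bundles.
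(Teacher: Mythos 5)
Your proof is correct and follows essentially the same route as the paper: base the holonomy at a point $z$ in the region where the connection is in standard form near $x_k$, observe that the prescribed 1-form forces $\hol(d_k)=\mathbf{i}$ so the image lands in $V_k$, and check that the residual $\mathcal{G}/\mathcal{G}_k\cong\U(1)$ action matches the $S^1_{\mathbf{i}}/\{\pm 1\}$ action. The paper's proof is terser (it declares the equivariance ``manifest''), but the content is identical; your extra care with the two $\U(1)$ quotients is a reasonable elaboration, not a departure.
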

\begin{proof}
The proof is analogous to that of Lemma \ref{partial_quotient}. Since $x$ is near the puncture, we have a chosen unitary identification $E_z\to\CC^2$ under which the line $F_k$ is spanned by $(1,0)$. Given a connection $A$, the holonomy $\text{hol}_z(\su(A))$ gives an element of $\wt{\mathcal{R}}_{g,n}$. Since $E_z$ is fixed by the based gauge group, this element is independent of the representative for the orbit $\mathcal{G}_k\cdot A$. Moreovoer, because we have fixed the 1-form of $A$ (and thus of $\su(A)$) near the punctures, the holonomy of $\su(A)$ around a small loop around $x_k$ staying inside the locus where $t\geq 1/2$ will be precisely $\mathbf{i}$. This implies that $\hol_z(\su(A))\in\wt{\mathcal{R}}_{g,n}$ is in $V_k$. The identification $\langle A\rangle_k\mapsto\hol_z(\su(A))$ is manifestly a $\U(1)$-equivariant bijection.
\end{proof}
\begin{lemma} \label{line_bundles_over_R} 
There is an isomorphism
$$\mathbf{V}_k\cong V_k$$
\end{lemma}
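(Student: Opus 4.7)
The plan is to realize $\mathbf{V}_k$ as the complex line bundle associated to a principal $\U(1)$-bundle which, by Lemma \ref{partial_U1_quotient}, is precisely $V_k$. I would proceed by carrying out the gauge quotient defining $\mathbf{V}_k$ in two stages, following the template of Lemmas \ref{partial_quotient}--\ref{equiv_bundles_over_R}.

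First, I would restrict the universal construction to the slice $\mathcal{A}_\text{flat}\times\{x_k\}$, on which $\pi^*(H_k)$ is canonically trivial, equal to $\mathcal{A}_\text{flat}\times H_k$. I would then factor the $\overline{\mathcal{G}}$-action through the based gauge group $\mathcal{G}_k$ of Lemma \ref{partial_U1_quotient}. Since every element of $\mathcal{G}_k$ is the identity in a neighborhood of $x_k$, it acts trivially on the fiber $H_k$, so
\[
(\mathcal{A}_\text{flat}\times H_k)/\mathcal{G}_k \;\cong\; (\mathcal{A}_\text{flat}/\mathcal{G}_k)\times H_k.
\]
The remaining quotient by the residual $\U(1)\cong\mathcal{G}/\mathcal{G}_k$ (together with the trivially-acting central $\{\pm 1\}$) is then the standard associated-bundle construction, yielding
\[
\mathbf{V}_k \;\cong\; (\mathcal{A}_\text{flat}/\mathcal{G}_k)\times_{\U(1)}H_k.
\]

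Applying Lemma \ref{partial_U1_quotient} next identifies $\mathcal{A}_\text{flat}/\mathcal{G}_k$ with $V_k$ as principal $\U(1)$-bundles over $\mathcal{R}_{g,n}$, which converts the above into $V_k\times_{\U(1)}H_k$. Interpreting $V_k$ on the right as a complex line bundle via its natural $\U(1)$-structure, this says precisely $\mathbf{V}_k\cong V_k$.

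The one step of substance I expect is verifying that the two $\U(1)$-characters actually coincide under this identification. The residual gauge group acts on $H_k$ by conjugation of an off-diagonal endomorphism by a diagonal element near $x_k$, while Lemma \ref{partial_U1_quotient} identifies $\mathcal{G}/\mathcal{G}_k$ with $S^1_\mathbf{i}/\{\pm 1\}$ via holonomy. Matching the character coming from conjugation on $H_k$ with the standard character on $S^1_\mathbf{i}/\{\pm 1\}$ in a fixed trivialization of $E$ near $x_k$ is the one genuine computation required; once that is done, everything else is formal manipulation of quotients.
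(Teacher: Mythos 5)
Your proposal is correct and matches the paper's argument in substance: the paper likewise rests on the holonomy identification of Lemma \ref{partial_U1_quotient} and on checking that conjugation of an off-diagonal element of $H_k$ by $\diag(v,v^{-1})$ multiplies its entry by $v^2$, which is exactly the character-matching computation you isolate at the end (and which does need to be written out, since it is the whole content of the lemma). The only difference is organizational — the paper defines one explicit gauge-invariant map $\mathbf{V}_k\to V_k\times_{\U(1)}\CC$ via $\mathcal{G}\cdot(t,A)\mapsto(\hol_z(\su(A)),w)$ and verifies well-definedness, rather than factoring the quotient through $\mathcal{G}_k$ first — so your two-stage quotient is a valid repackaging of the same proof.
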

\begin{proof}
We mimic the proof of Lemma \ref{equiv_bundles_over_R}. For clarity of notation, we let $L_k$ specifically denote the complex line bundle $V_k\times_{\U(1)}\CC$ associated to the principal $\U(1)$-bundle $V_k\subset\wt{\mathcal{R}}_{g,n}$ over $\mathcal{R}_{g,n}$. Here, $w\in\U(1)\subset\CC$ acts on $\CC$ by simple multiplication, but the action on $V_k$ (in order to make it a free one) is by conjugating a representation by a choice of square root $\diag(w^{1/2},w^{-1/2})$. Now, a vector in the total space of $\mathbf{V}_k$ is a gauge orbit of a pair $(t,A)$ where $t\in H_k\subset(\Ad E)_{x_k}\subset\End(E_{x_k})$. With our fixed isomorphism $E_{x_k}\cong\CC^2$, $t$ is just a matrix $\left(\begin{smallmatrix}0&w\\-\obar w&0\end{smallmatrix}\right)$. Still letting $z$ be a basepoint near $x_k$, we define a map $\mathbf{V}_k\to L_k$ via:
$$\mathcal{G}\cdot\left(\left(\begin{smallmatrix}0&w\\-\obar w&0\end{smallmatrix}\right),A\right)\mapsto\obar{(\hol_z(\su(A)),w)}$$
Let us prove the independence on gauge representative. Suppose $g\in\mathcal{G}$, and let $g_z=(\begin{smallmatrix}v&0\\0&v^{-1}\end{smallmatrix})$ for a unit length $v\in\CC$. Then we have:
\begin{align*}
&\hol_z\left(\su(g\cdot A)\right)= \left(\begin{smallmatrix}v&0\\0&v^{-1}\end{smallmatrix}\right)\cdot\hol_z(\su(A))\cdot
\left(\begin{smallmatrix}v&0\\0&v^{-1}\end{smallmatrix}\right)^{-1}\\
\text{and: }& \left(\begin{smallmatrix}v&0\\0&v^{-1}\end{smallmatrix}\right)\left(\begin{smallmatrix}0&w\\-\obar w&0\end{smallmatrix}\right)\left(\begin{smallmatrix}v&0\\0&v^{-1}\end{smallmatrix}\right)^{-1}=
\left(\begin{smallmatrix}0&v^2w\\-v^{-2}\obar w&0\end{smallmatrix}\right).
\end{align*}
The pair
$$\left(\left(\begin{smallmatrix}v&0\\0&v^{-1}\end{smallmatrix}\right)\cdot\hol_z(\su(A))\cdot
\left(\begin{smallmatrix}v&0\\0&v^{-1}\end{smallmatrix}\right)^{-1},v^2w\right)$$
is equivalent to $(\hol_z(\su(A)),b)$ under the action of $v^2\in\U(1)$. This shows independence of the representative, and the resulting map is certainly a linear isomorphism.
\end{proof}

\begin{cor}
We have $c_1(V_k)=c_1(\mathbf{V}_k)$ in $H^2(\mathcal{R}_{g,n};\QQ)$.
\end{cor}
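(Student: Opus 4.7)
The corollary is essentially immediate from the preceding lemma, but requires a small bookkeeping observation about the relationship between the principal $\U(1)$-bundle $V_k$ and the complex line bundle $\mathbf{V}_k$. The plan is as follows.

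First, I would note that the first Chern class $c_1(V_k)$ of the principal $\U(1)$-bundle $V_k \to \mathcal{R}_{g,n}$ is by definition the first Chern class of its associated complex line bundle $L_k = V_k \times_{\U(1)} \CC$. This is a standard convention, but it is worth recording, especially since the proof of Lemma \ref{line_bundles_over_R} was careful to specify that the $\U(1)$-action on $V_k$ used to form $L_k$ is the one by conjugation with $\diag(w^{1/2}, w^{-1/2})$ rather than $\diag(w,1)$ or similar, which fixes a normalization.

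Second, Lemma \ref{line_bundles_over_R} produces an explicit complex-linear isomorphism of line bundles $\mathbf{V}_k \cong L_k$. Taking first Chern classes of both sides gives $c_1(\mathbf{V}_k) = c_1(L_k) = c_1(V_k)$ in $H^2(\mathcal{R}_{g,n};\ZZ)$, and hence in the rational cohomology $H^2(\mathcal{R}_{g,n};\QQ)$ as well.

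There is no real obstacle here; the only subtlety — and the reason it is worth stating as a corollary rather than burying in a remark — is making sure the chosen $\U(1)$-action on $V_k$ matches the one that appears naturally in the universal construction of $\mathbf{V}_k$. This was pinned down in the proof of Lemma \ref{line_bundles_over_R} through the factor of $v^2$ in the change of variables, which is precisely the square-root convention needed so that the associated line bundle is $\mathbf{V}_k$ rather than a nontrivial tensor power thereof. With that convention fixed, the equality of Chern classes is immediate.
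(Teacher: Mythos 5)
Your proof is correct and matches the paper's intent exactly: the corollary is stated without proof there because it follows immediately from Lemma \ref{line_bundles_over_R}, with the only subtlety being precisely the normalization of the $\U(1)$-action (the $v^2$ versus $v$ convention) that you correctly identify as having been fixed in that lemma's proof.
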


The upshot of this is that we may now use the $D^\pm_{k,l}$'s to study the cohomology ring of $\mathcal{R}_{g,n}$: we know that their first Chern classes of the $V_k$'s are Poincar\'e dual to a pair of $D^\pm_{k,l}$'s, and we also know that these classes are part of a natural generating set for the cohomology. What remains is to determine the relations they satisfy.

\subsection{Action of the Mapping Class Group and Flips}

When analyzing the $\mathcal{D}^\pm_{k,l}$'s, it will be convenient to exploit the considerable symmetry in $\mathcal{R}_{g,n}$ arising from the symmetries of $\Sigma^*$.

\vspace{12pt}

\noindent\textbf{Action of the Mapping Class Group.} On the space of connections on $E\to\Sigma^*$ there is a (left) action of the basepoint preserving mapping class group $\Mod^*_{g,n}$ by the operation of pullback. Let us make this precise. Fix a basepoint $z\in\Sigma$ and for each $\phi\in\Mod_{g,n}$, fix representative $f:\Sigma^*\to\Sigma^*$ for $\phi$ with $f(z)=z$ and with the property that on the coordinate cylinders $U_k\cong(0,1)\times S^1$ near each puncture, $f$ agrees with the coordinates. In other words, if $f$ maps the $k$th puncture to the $j$th, then it gives a diffeomorphism $U_k$ to $U_j$ which is just $(s_k,\theta_k)\mapsto(s_j,\theta_j+\xi)$ for some constant $\xi$. In addition, fix a fiberwise isometry $F:E\to E$ covering $f$ which is the identity at $z$, and let $\wh F:\Ad E\to\Ad E$ denote the induced isometry. Given a connection $A\in\mathcal{A}$, we define a new connection $\wt M_\phi(A)$ by setting, for a local section $t$ of $\Ad E$ and tangent vector $X\in T_x\Sigma^*$:
\begin{equation} \label{mapping_class_conn}
d_{\wt M_\phi(A)}(t)\cdot X=\wh F(d_A(\wh F\vphantom{F}^{-1}\circ t\circ f)\cdot df(X))
\end{equation}
This complicated looking formula is exactly what is required so that a section $t$ is $M_\phi(A)$-flat if and only if $\wh F\circ t\circ f^{-1}$ is $A$-flat. We see that $\wt M_\phi$ gives a map on $\mathcal{A}$ which preserves the set of flat connections. Now, if $g$ is a gauge transformation, then $t$ is $\wt M_\phi(g\cdot A)$-flat if and only if $\Ad g\circ \wh F\circ t\circ f^{-1}$ is $A$-flat, if and only if $(\wh F\vphantom{F}^{-1}\circ \Ad g\circ \wh F)\circ t$ is $\wt M_\phi(A)$-flat. Moreover, it is clear that
$$(\wh F\vphantom{F}^{-1}\circ \Ad g\circ \wh F)=\Ad(F^{-1}\circ g\circ F)$$
and thus
$$\wt M_\phi(g\cdot A)=(F^{-1}\circ g\circ F)\cdot \wt M_\phi(A)$$
This shows that $\wt M_\phi$ preserves gauge equivalence classes. Hence, $\wt M_\phi$ descends to a diffeomorphism $M_\phi$ on $\mathcal{R}_{g,n}$. One can check that although the map $\wt M_\phi$ on $\mathcal{A}$ itself may depend very much on the choices made (for example, $f$ and $F$), these choices do not affect the holonomy of a flat connection and so $M_\phi$ depends only the mapping class $\phi$. Indeed, the holonomy of $M_\phi(A)$ around a loop $\gamma$ based at $z$ is clearly just the holonomy of $A$ around the loop $f(\gamma)$, which is certainly invariant under the choice of $f$ for $\phi$ and completely independent of $F$. We therefore get an action of the mapping class group $\Mod^*_{g,n}$ on $\mathcal{R}_{g,n}$. Another way to see this action is simply by looking at the action of $\Mod^*_{g,n}$ on the $\ZZ/2$ extension $\wh\Gamma$ of $\pi_1(\Sigma^*)$: precomposing representations by this action induces an one on $\wt{\mathcal{R}}_{g,n}$ (which lifts the action from $\mathcal{R}_{g,n}$).

Inside of $\Mod^*_{g,n}$, there is a copy of the braid group $B_n$ on $n$ strands arising from those diffeomorphisms which are supported on a small disk containing the punctures. There is a natural surjective homomorphism $\tau:B_n\to S_n$ where $S_n$ is the symmetric group on $n$ elements given by observing how a mapping class permutes the punctures.

\begin{lemma} \label{mod_gn_action_on_V_k}
Let $\phi\in B_n\subset\Mod_{g,n}$, and set $\sigma=\tau(\phi)$. Then $M_\phi(D_{\sigma(k),\sigma(l)}^\pm)=D_{k,l}^\pm$, and $M_\phi^*(V_{\sigma(k)})\cong V_k$.
\end{lemma}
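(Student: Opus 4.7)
The plan is to exploit the naturality of the constructions under the diffeomorphism $\phi$, rather than chase explicit formulas for $\phi_*$ on the generators $d_k$ (which involve awkward conjugation terms $g_k d_{\sigma(k)} g_k^{-1}$).

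For the claim about $D^\pm_{k,l}$, I would first replace the defining condition by an intrinsic one. Since each $T_j = \rho(d_j)$ is a unit purely imaginary quaternion, $T_j^2 = -I$, and so $T_k = \pm T_l$ is equivalent to $T_k T_l = \mp I$. Hence $D^\pm_{k,l}$ is precisely the locus where $\rho(d_k d_l) = \mp I$, a condition that depends only on the conjugacy class of $d_k d_l$ in $\pi_1(\Sigma^*)$. The element $d_k d_l$ is conjugate in $\pi_1(\Sigma^*)$ to (the based version of) a simple closed loop enclosing precisely $x_k$ and $x_l$, and since $\phi$ is a diffeomorphism carrying $x_k$ to $x_{\sigma(k)}$, it carries such a loop to a simple closed loop enclosing $x_{\sigma(k)}$ and $x_{\sigma(l)}$. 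Thus $\phi_*(d_k d_l)$ is conjugate to $d_{\sigma(k)} d_{\sigma(l)}$ in $\pi_1(\Sigma^*)$. Together with $M_\phi(\rho) = \rho \circ \phi_*$, this gives
\[
[\rho]\in D^\pm_{\sigma(k),\sigma(l)} \iff \rho(d_{\sigma(k)}d_{\sigma(l)}) = \mp I \iff \rho(\phi_*(d_k d_l)) = \mp I \iff [M_\phi(\rho)]\in D^\pm_{k,l}.
\]

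For the line bundle isomorphism, I would reduce to a naturality statement for the universal pair, using the identification $V_k \cong \mathbf{V}_k$ of Lemma~\ref{line_bundles_over_R}. The representative $f$ of $\phi$ with $f(x_k)=x_{\sigma(k)}$ and the covering isometry $F:E\to E$ used to define $M_\phi$ can be chosen so that $F$ carries $F_k$ to $F_{\sigma(k)}$, hence $\wh F$ carries the preferred complex line $H_k\subset(\Ad E)_{x_k}$ to $H_{\sigma(k)}\subset(\Ad E)_{x_{\sigma(k)}}$. At the pre-quotient level over $\mathcal{A}_\text{flat}\times\Sigma$, the map $\wt M_\phi\times f$ combined with $\wh F$ interchanges the subbundles $\pi^*H_k$ and $\pi^*H_{\sigma(k)}$ of $\pi^*(\Ad E)$, and this interchange is compatible with the $\mathcal{G}$-action by the same computation that defines the gauge action on $M_\phi(A)$. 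Descending to the quotient $\mathcal{R}_{g,n}\times\Sigma$ therefore yields an isomorphism of universal pairs taking $\mathbf{V}_k$ over the slice $\mathcal{R}_{g,n}\times\{x_k\}$ onto $(M_\phi\times f)^*\mathbf{V}_{\sigma(k)}$. Restricting to that slice gives $M_\phi^*\mathbf{V}_{\sigma(k)}\cong\mathbf{V}_k$, which is what is needed.

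The main obstacle will be the second part: verifying that the isomorphism produced at the level of universal pairs is genuinely well-defined, independent of the chosen representative $f$ and lift $F$. Two such choices differ by an element of the gauge group, and one must check that the induced ambiguity takes values in the trivially-acting center, so the descent to the quotient is canonical. The first part, by contrast, is clean once one accepts that a surface diffeomorphism preserves free homotopy classes of loops bounding embedded disks around a specified set of punctures, which is a standard fact.
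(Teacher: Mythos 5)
Your reduction of $T_k=\pm T_l$ to the conjugation-invariant condition $\rho(d_kd_l)=\mp I$ is a nice observation, but the step where you conclude that $\phi_*(d_kd_l)$ is conjugate to $d_{\sigma(k)}d_{\sigma(l)}$ is a genuine gap, and the claim fails for general braids and non-adjacent pairs. The braid action sends $d_k\mapsto g_k\,d_{\sigma(k)}\,g_k^{-1}$ with a conjugating element $g_k$ that depends on $k$, so $\phi_*(d_kd_l)$ is conjugate only to $d_{\sigma(k)}\,h\,d_{\sigma(l)}\,h^{-1}$ with $h=g_k^{-1}g_l$, which need not be conjugate to $d_{\sigma(k)}d_{\sigma(l)}$. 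Concretely, take $\phi=b_2^2$ (so $\sigma=\mathrm{id}$) and $(k,l)=(1,3)$: then $\phi_*(d_1)=d_1$ and $\phi_*(d_3)=d_3^{-1}d_2^{-1}d_3d_2d_3$, and for a representation with $T_1=T_3=\mathbf{i}$, $T_2=\mathbf{j}$ one computes $\rho(\phi_*(d_1d_3))=+1$ while $\rho(d_1d_3)=-1$; these are not conjugate, and indeed this $\rho$ lies in $D^+_{1,3}$ while its image lies in $D^-_{1,3}$. The topological picture you invoke also breaks down: once $n\geq 4$ there are infinitely many isotopy classes of simple closed curves enclosing exactly a given pair of punctures, with pairwise non-conjugate classes in $\pi_1$, so ``a simple closed loop enclosing precisely $x_{\sigma(k)}$ and $x_{\sigma(l)}$'' does not determine a conjugacy class. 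Without further restriction your argument only yields the statement for the union $D^+_{k,l}\cup D^-_{k,l}$, up to a possible exchange of the two sheets. The paper's proof avoids this by reducing to the elementary braids $b_i$ and the adjacent pair $(i,i+1)$, for which $b_i(d_id_{i+1})=d_id_{i+1}$ holds exactly and the signs work out; any repair of your argument should likewise track the conjugators $g_k$ or restrict the class of $\phi$ and pairs $(k,l)$ actually needed.

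For the bundle statement your route is legitimate but is not the paper's: establishing $M_\phi^*\mathbf{V}_{\sigma(k)}\cong\mathbf{V}_k$ at the level of the universal pair and then invoking Lemma \ref{line_bundles_over_R} is essentially the argument the paper gives later for Proposition \ref{prop_symmetries_univ_bundle}(ii), and the descent/gauge-equivariance check you defer as the ``main obstacle'' is exactly the computation carried out there (namely $\wt M_\phi(g\cdot A)=(F^{-1}\circ g\circ F)\cdot \wt M_\phi(A)$, which shows the induced ambiguity acts through the gauge group and hence dies in the quotient). The paper's own proof of the lemma is more elementary: it again reduces to the generators $b_i$ and observes directly that $\wt M_{b_i}$ carries the subset $V_{i+1}\subset\wt{\mathcal{R}}_{g,n}$ to $V_i$ equivariantly for the conjugation action of $\U(1)$. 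Either way, the first half of your argument needs to be repaired before the lemma can be considered proved.
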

\begin{proof}
The first statement is straightforward. If $\phi$ is a mapping class carrying neighborhoods of $x_k$ and $x_l$ to those of $x_{\sigma(k)}$ and $x_{\sigma(l)}$ respectively, then the holonomies of the connection $M_\phi(A)$ around $x_k$ and $x_l$ are just those of $A$ around $x_{\sigma(k)}$ and $x_{\sigma(l)}$. Hence, $M_\phi$ carries connections with equal or antipodal holonomies around $x_{\sigma(k)}$ and $x_{\sigma(l)}$ to those with equal or antipodal connections around $x_k$ to $x_l$, respectively.

For the second statement about $V_k$, let us make use of an explicit presentation for $B_n$. It is well known that $B_n$ is generated by $n-1$ elementary braids $b_1,\ldots,b_{n-1}$ subject to the relation $b_ib_{i+1}b_i=b_{i+1}b_ib_{i+1}$ for each $i$, and where $b_i$ and $b_j$ commute for $\abs{i-j}\geq 2$. The action of $B_n$ on $\pi_1(\Sigma^*)$ is given by
\begin{align*}
b_i(d_k)=\begin{cases} d_k\text{, }&\text{if }k\neq i,i+1 \\
														d_{i+1}\text{, }&\text{if }k=i \\
														d_{i+1}^{-1}d_id_{i+1}\text{, }&\text{if }k=i+1 \end{cases},
\end{align*}
and $B_n$ fixes the $a_j$'s. The lemma will be proved if we can show it is true when $\phi$ arises from an elementary braid $b_i$, since these generate the action. Since $\tau(b_i)$ is just the transposition $(i\;\;i+1)$, we need to show that $M_\phi^*(V_{i+1})$ and $V_i$ are isomorphic. Clearly, the action of $M_\phi$ on $\wt{\mathcal{R}}_{g,n}$ maps the subset $V_{i+1}$ to $V_{i}$ (since $\rho\circ b_i(d_i)=\rho(d_{i+1})=\mathbf{i}$). In fact, it is easy to check that this map is equivariant with respect to the action of $\U(1)$ by conjugation. This proves directly that the pullback of $V_{i+1}$ by the action of $b_i$ on the base is isomorphic to $V_i$ as a $\U(1)$ principal bundle, completing the proof.
\end{proof}

\vspace{12pt}

\noindent\textbf{Flips.} There is an additional set of symmetries in $\mathcal{R}_{g,n}$ which we call ``flips''. For any subset $J\subset\{1,\ldots,n\}$ of even cardinality, we get an involution on $\wt{\mathcal{R}}_{g,n}$, denoted by $\wt M_J$:
$$\wt M_J(\ubar S,T_1,\ldots,T_n)=(\ubar S,\eps_1T_1,\ldots,\eps_nT_n)$$
where $\eps_k$ takes the value -1 if $k\in J$ and 1 if not. Since multiplication by $-1$ commutes with conjugation, this map descends to a map $M_J$ on $\mathcal{R}_{g,n}$. The maps $\wt M_J$ and $M_J$ are called \emph{flips}.

Suppose $\phi\in\Mod^*_{g,n}$ is a mapping class which permutes the punctures according to a permutation $\sigma$ of the indices. Then we have 
\begin{equation} \label{commuting_symmetries}
M_\phi\circ M_{\sigma(J)}=M_J\circ M_\phi
\end{equation}
Let us record the effect of the maps $M_J$ on the line bundles $V_k$ and submanifolds $D\pm_{k,l}$
\begin{lemma} \label{effect_flipping}
We have $M_J^*(V_k)\cong V_k^*$ if $k\in J$, otherwise $M_J^*(V_k)\cong V_k$. In addition, $M_J(D^\pm_{k,l})=D^\pm_{k,l}$ if $\abs{\{k,l\}\cap J}$ is even, and $M_J(D^\pm_{k,l})=D^\mp_{k,l}$ if it is odd.
\end{lemma}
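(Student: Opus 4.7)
The plan is to unpack the definitions and do direct computations; there is no deep geometry to uncover, only careful bookkeeping of the various $\SU(2)$-actions and sign conventions.

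For the statement about $D_{k,l}^\pm$, I would simply apply $M_J$ to the defining equation. A point in $D_{k,l}^\pm$ satisfies $T_k=\pm T_l$; after applying $\wt M_J$, the relevant coordinates become $\eps_k T_k$ and $\eps_l T_l$, and the equation becomes
$$\eps_k T_k=(\pm\eps_k\eps_l)(\eps_l T_l).$$
Since $\eps_k\eps_l=1$ precisely when $|\{k,l\}\cap J|$ is even and $-1$ when it is odd, the sign is preserved in the first case and reversed in the second, yielding the claim. (Recall $M_J$ is an involution, so this gives equality of sets.)

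For the statement about $V_k$, I would work with the explicit presentation of $V_k=\{(\ubar S,\ubar T)\in\wt{\mathcal R}_{g,n}:T_k=\mathbf i\}$ as a principal $\U(1)$-bundle, where $w\in\U(1)$ acts by conjugation by $g_w:=\diag(w^{1/2},w^{-1/2})$. The flip $M_J$ lifts to $\wt M_J$ on $\wt{\mathcal R}_{g,n}$, and I would analyze its interaction with $V_k$ in the two cases. If $k\notin J$, then $\eps_k=1$, so $\wt M_J$ preserves $V_k$ as a set; since multiplication by the scalars $\eps_j$ commutes with conjugation by $g_w$, this restriction is $\U(1)$-equivariant. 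A $\U(1)$-equivariant bundle map covering $M_J$ exhibits the isomorphism $M_J^*V_k\cong V_k$.

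If $k\in J$, then $\wt M_J$ carries $V_k$ to $V_k^-:=\{T_k=-\mathbf i\}$. To return to $V_k$, I would compose with the map $\rho_{\mathbf j}$ given by conjugating every coordinate by $\mathbf j$; this descends to the identity on $\mathcal R_{g,n}$ and uses $\mathbf j(-\mathbf i)\mathbf j^{-1}=\mathbf i$ to carry $V_k^-$ back onto $V_k$. The crucial calculation is the identity $\mathbf j g_w\mathbf j^{-1}=g_{w^{-1}}$, an immediate consequence of $\mathbf j\mathbf i\mathbf j^{-1}=-\mathbf i$. This gives $\rho_{\mathbf j}(w\cdot v)=w^{-1}\cdot\rho_{\mathbf j}(v)$, so $\rho_{\mathbf j}\circ\wt M_J:V_k\to V_k$ is $\U(1)$-anti-equivariant and covers $M_J$. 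Regarding the source as $V_k^{*}$ (the principal bundle with inverted $\U(1)$-action) converts this to an equivariant bundle map $V_k^{*}\to V_k$ over $M_J$, which is exactly an isomorphism $M_J^*V_k\cong V_k^{*}$.

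The only place any care is needed is in the $k\in J$ case: one must verify the anti-equivariance identity $\mathbf j g_w\mathbf j^{-1}=g_{w^{-1}}$ and then correctly interpret what an anti-equivariant bundle map means in terms of the dual principal bundle. Everything else is routine checking of definitions.
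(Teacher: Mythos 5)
Your proposal is correct and follows essentially the same route as the paper: the $D^\pm_{k,l}$ statement by direct sign bookkeeping (which the paper dismisses as obvious), the $k\notin J$ case by equivariance of $\wt M_J$, and the $k\in J$ case by composing with conjugation by $\mathbf j$ and observing that this intertwines the $\U(1)$-action with its inverse, so that the bundles are conjugate. Your explicit identity $\mathbf j g_w\mathbf j^{-1}=g_{w^{-1}}$ is exactly the computation underlying the paper's remark that acting by $z$ upstairs corresponds to acting by $\obar z$ on $V_k$.
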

\begin{proof}
The second statement is completely obvious, so we focus on the line bundles $V_k$. In the case that $k\notin J$, the map $\wt M_J$ simply maps $V_k$ to itself ($\U(1)$-equivariantly). So, we suppose $k\in J$. As an $S^1$ fiber bundle, $M_J^*(V_k)$ is just $\wt M_J^{-1}(V_k)$ by definition of pullback, which is just the set of representations $[\ubar S,\ubar T]$ with $T_k=-\mathbf{i}$. The map on $\wt{\mathcal{R}}_{g,n}$ which conjugates a representation by $\mathbf{j}$ brings $\wt M_J^{-1}(V_k)$ back to $V_k$, but this map is not $\U(1)$-equivariant. In fact, acting by $z\in\U(1)$ on $\wt M_J^{-1}(V_k)$ is the same as acting by $\obar z$ on $V_k$, so that these bundles are conjugates.
\end{proof} 

\vspace{12pt}

\noindent\textbf{Interaction with the Symplectic Structure.} The maps $M_J$ and $M_\phi$ have the convenient property that they are actually \emph{symplectomorphisms} of the sympletic manifold $(\mathcal{R}_{g,n},\omega)$.

\begin{prop} \label{prop_preserves_symp_form}
$M_J^*(\omega)=M_\phi^*(\omega)=\omega$.
\end{prop}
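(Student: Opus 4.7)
The plan is to treat the two assertions separately, in each case realizing the given symmetry as induced from an operation whose preservation of the relevant form is transparent. For $M_\phi$: the map $\wt M_\phi$ of~(\ref{mapping_class_conn}) is exactly the pullback operation on $\mathcal{A}_{\mathrm{flat}}$ coming from the diffeomorphism $f:\Sigma^*\to\Sigma^*$ and the bundle lift $\wh F$. Its differential sends a tangent vector $a\in\Omega^1(\Sigma^*,\so(\Ad E))$ to the $\wh F$-conjugated pullback of $a$. Because every mapping class is orientation-preserving and because $\trace$ on $\so(3)$ is $\Ad$-invariant, both operations leave the integral $\tfrac{1}{4\pi^2}\int_\Sigma\trace(a\wedge b)$ invariant, so $\wt M_\phi^*\wt\omega=\wt\omega$, which descends to $M_\phi^*\omega=\omega$.

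For $M_J$ the argument is less direct, since $M_J$ comes neither from a diffeomorphism of $\Sigma$ nor from a gauge transformation (a gauge transformation merely conjugates holonomies at a basepoint and so acts trivially on $\mathcal{R}_{g,n}$). I would instead exhibit $\wt M_J$ as pullback through the automorphism $\phi_J:\wh\Gamma\to\wh\Gamma$ sending $d_k\mapsto\zeta d_k$ for $k\in J$, $d_k\mapsto d_k$ for $k\notin J$, and fixing every $a_j$: substituting into the defining relation $\prod_j[a_{2j-1},a_{2j}]\prod_k d_k=\zeta$ produces an extra factor $\zeta^{|J|}=1$ (using $|J|$ even), so $\phi_J$ really is an automorphism. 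Viewing $T_{[\rho]}\mathcal{R}_{g,n}$ as a subquotient of the space of twisted $1$-cocycles $\xi:\wh\Gamma\to\su(2)$ for $\Ad\rho$, with the tangent-to-conjugacy-class constraints $\xi(d_k)\in\mathrm{Im}(\ad T_k)$, any such cocycle automatically satisfies $\xi(\zeta)=0$, because $\zeta^2=1$ and $\Ad\rho(\zeta)=\Id$ together force $2\xi(\zeta)=0$. Hence $\xi\circ\phi_J=\xi$ on each generator and so on all of $\wh\Gamma$. Moreover $\Ad(\wt M_J\rho)=\Ad\rho$ since $\phi_J$ modifies $\rho$ only by central factors, so the cocycle complex, the coboundary space, and the conjugacy-class constraints are literally identical at $[\rho]$ and at $M_J([\rho])$, with $dM_J$ equal to the identity under this identification.

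To upgrade this tangent-level identification to a symplectic one, I would invoke the standard formula expressing $\omega$ on $\mathcal{R}_{g,n}$ as a cup-product pairing on twisted $1$-cocycles composed with $\trace$ on $\su(2)$. Since this formula depends only on $\Ad\rho$ and on the trace form, both of which $M_J$ preserves, one concludes $M_J^*\omega=\omega$. The step I expect to be the main obstacle is precisely this last one: translating the connection-theoretic $\wt\omega$ of (\ref{def_symp_form}) into its group-cohomological cup-product guise in the presence of fixed-conjugacy-class constraints at the punctures. This identification is classical but nontrivial, going back (for closed surfaces) to Atiyah-Bott and refined in the parabolic setting via the relative symplectic reduction / quasi-Hamiltonian framework; it would need to be carried out with care to match exactly the normalization and boundary data used here.
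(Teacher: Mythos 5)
Your argument for $M_\phi$ is essentially the paper's: the differential of $\wt M_\phi$ is pullback by $f$ conjugated by $\wh F$, the trace form is invariant under that conjugation, and orientation-preservation gives $\int_\Sigma f^*\trace(a\wedge b)=\int_\Sigma\trace(a\wedge b)$. For $M_J$, however, you take a genuinely different route, and it is worth comparing the two. The paper's key trick is to realize $M_J$ upstairs on $\mathcal{A}_\text{flat}$ as the action of an explicit $\SO(3)$ gauge transformation $u_J$ lying in the full gauge group $\wh{\mathcal{G}}$ of $\Ad E$ but \emph{not} in the restricted group $\mathcal{G}$ of transformations induced from $\SU(2)$; the obstruction to lifting $u_J$ to $\SU(2)$ along the loops $d_k$, $k\in J$, is exactly what negates the corresponding holonomies, while the fact that $u_J$ is still a gauge transformation makes invariance of $\wt\omega$ immediate from $\Ad$-invariance of the trace form. (So your parenthetical that a gauge transformation ``acts trivially on $\mathcal{R}_{g,n}$'' is true only for $\mathcal{G}$; the residual action of $\wh{\mathcal{G}}/\mathcal{G}$ is precisely the group of flips, and this $u_J$ is reused later in Proposition \ref{prop_symmetries_univ_bundle}.) Your alternative --- realizing $\wt M_J$ as precomposition with the automorphism $d_k\mapsto\zeta d_k$ of $\wh\Gamma$, checking $\xi(\zeta)=0$ and hence $\xi\circ\phi_J=\xi$ on twisted $1$-cocycles, and noting that $\Ad\rho$ is unchanged so that the cup-product pairing is literally identical --- is correct as far as it goes, and is arguably cleaner at the level of tangent spaces. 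But it buys this cleanliness at the cost of the step you yourself flag: identifying the gauge-theoretic $\wt\omega$ of (\ref{def_symp_form}) with the group-cohomological cup product in the relative/parabolic setting, with matching normalization. That identification is classical (Goldman for closed surfaces, Guruprasad--Huebschmann--Jeffrey--Weinstein and others for fixed conjugacy classes at punctures) but is a nontrivial import that the paper deliberately avoids by staying entirely in the connection picture, where invariance of $\wt\omega$ under any gauge transformation is a one-line computation. As written, your proof of the $M_J$ half is therefore an outline with one substantive external dependency rather than a self-contained argument.
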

\begin{proof}
Let us first prove that $M_\phi^*(\omega)=\omega$. Let $[\rho]\in\mathcal{R}_{g,n}$, choose a connection $A\in\mathcal{A}_\text{flat}$ with $\hol_z(\su(A))=\rho$, and let $B=\wt M_\phi(A)$. The tangent space to the equivalence class $[\rho]\in\mathcal{R}_{g,n}$ consists of equivalence classes 1-forms with values in $\so(\Ad E)$ which vanish on the loci $s_k\geq 1/2$ near each puncture and are in the kernel of the connection operator $d_A$. For such a 1-form $a$, the pushforward 1-form $a'=(d\wt M_\phi)_A(a)$ is given at $x\in\Sigma^*$ by (letting $X\in T_x\Sigma^*$ and $v\in E_x$):
$$a'_x(X)\cdot v=\wh F_x\vphantom{F}^{-1}\left(a_{f(x)}(df(X))\cdot \wh F_x(v)\right)$$
Despite this complicated-looking formula, given two such 1-forms $a,b$, it is not hard to see that their pushforwards satisfy $\trace(a'\wedge b')=f^*\trace(a\wedge b)$, since the map $\wh F$ preserves the bilinear form $\trace$ on $\so(\Ad E)$. Hence:
\begin{align*}
M_\phi^*(\omega)\left([a]\wedge[b]\right)=&\frac{1}{4\pi^2}\int_\Sigma \trace\left(a'\wedge b'\right)\\
=&\frac{1}{4\pi^2}\int_\Sigma f^*\left(\trace\left(a\wedge b\right)\right)=\frac{1}{4\pi^2}\int_\Sigma \trace\left(a\wedge b\right)=\omega\left([a]\wedge[b]\right).
\end{align*}
This proves that elements of $\Mod_{g,n}$ preserve $\omega$.

As for the map $M_J$, we will need to lift this diffeomorphism to one on $\mathcal{A}_\text{flat}$ where the definition of $\omega$ originates; we need an operation on connections $A$ which negate the holonomies of $\su(A)$ around $x_k$ for $k\in J$. The idea is that if $A'$ is a connection with $\hol_z(\su(A'))=M_J(\hol_z(\su(A)))$, then $A$ and $A'$ will not be guage equivalent with respect to $\mathcal{G}$ but will be gauge equivalent with respect to the full gauge group $\wh{\mathcal{G}}$ of $\SO(3)$ automorphisms of $\Ad E$ over $\Sigma^*\setminus\{z_0\}$, not necessarily arising from an $\SU(2)$ automorphism of $E$. The group $\mathcal{G}$ is naturally a subgroup of $\wh{\mathcal{G}}$ of index $\abs{H^1(\Sigma;\ZZ/2)}$. Still using our trivialization of $E$ outside the auxillary point $z_0$, let $u_J$ be an $\SO(3)$ automorphism of $\Ad E$ over $\Sigma^*\setminus\{z_0\}$ such that with respect to this trivialization, $u_J$ gives a function $\Sigma^*\setminus\{z_0\}\to\SO(3)$ with $g(z)=\Id$ for whom the induced homomorphism ${u_J}_*:\wh\Gamma\to\pi_1(\SO(3),\Id)\cong\ZZ/2$ satisfies:
\begin{itemize}
\item ${u_J}_*(a_j)=\obar 0$, for $1\leq j\leq 2g$.
\item ${u_J}_*(d_k)=\obar 1$ for $k\in J$, and $\obar 0$ for $k\nin J$.
\item ${u_J}_*(\zeta)=\obar 0$, where $\zeta$ is the central order two generator.
\end{itemize}
Such a $u_J$ can be shown to exist using a homotopy equivalence of $\Sigma^*\setminus\{z_0\}$ with a wedge of circles. Moreover, we can arrange $u_J$ to be the identity near the puncture $x_k$ for $k\nin J$, and for $k\in J$, the explicit function:
\begin{equation}
u_J(s_k,\theta_k)=\left(\begin{smallmatrix}\cos\theta_k&0&-\sin\theta_k\\0&1&0\\ \sin\theta_k&0&\cos\theta_k\end{smallmatrix}\right)
=\Ad\left(\begin{smallmatrix}0&e^{i\theta_k/2}\\-e^{-i\theta_k/2}&0\end{smallmatrix}\right).
\end{equation}
Even though the matrix $\left(\begin{smallmatrix}0&e^{i\theta_k/2}\\-e^{-i\theta_k/2}&0\end{smallmatrix}\right)$ is not well defined on $U_k\setminus\{x_k\}$, the expression does give an adjoint which is single-valued. Near $x_k$ for $k\in J$, if $a$ is the fixed connection 1-form of $A$, the 1-form of $u_J\cdot A$ is
\begin{align*}
\Ad_{u_J}(a)+u_Jd(u_J^{-1})&\;=\ad\left[\tfrac{1}{4}\left(\begin{smallmatrix}0&e^{i\theta_k/2}\\-e^{-i\theta_k/2}&0\end{smallmatrix}\right) \left(\begin{smallmatrix}i&0\\0&-i\end{smallmatrix}\right)\left(\begin{smallmatrix}0&-e^{i\theta_k/2}\\e^{-i\theta_k/2}&0\end{smallmatrix}\right)\right. \\
&\hspace{36pt}\left.+\tfrac{1}{2}\left(\begin{smallmatrix}0&e^{i\theta_k/2}\\-e^{-i\theta_k/2}&0\end{smallmatrix}\right)\left(\begin{smallmatrix}0&ie^{i\theta_k/2}\\ie^{-i\theta_k/2}&0\end{smallmatrix} \right)\right]\\
&\;=\ad\left[\tfrac{1}{4}\left(\begin{smallmatrix}-i&0\\0&i\end{smallmatrix}\right)+\tfrac{1}{2}\left(\begin{smallmatrix}i&0\\0&-i\end{smallmatrix}\right)\right]=\tfrac{1}{4}\ad\left(\begin{smallmatrix}i&0\\0&-i\end{smallmatrix}\right)=a
\end{align*}
We see that acting by $u_J$ preserves $A$ near the punctures and so gives a map on $\mathcal{A}$, which certainly preserves the flat locus and descends to a function on $\mathcal{G}$ equivalence classes. The holonomy of $\su(u_J\cdot A)$ around the loop $d_k$ will clearly be that of $\su(A)$ for $k\nin J$. Moreover, since $u_J$ is just an $\SO(3)$ gauge transformation with $u_J(z)=\Id$, the $\SO(3)$ holonomy of $u_J\cdot A$ based at $z$ is exactly that of $A$. Suppose if $k\in J$ and let $s(\theta_k)$ denote a section $[0,2\pi]\to E$ over the loop $d_k$ that is $\su(A)$-parallel. Along $d_k:[0,2\pi]\to\Sigma^*$, with respect to the trivialization of $E$, $u_J$ is $\Ad v_J$ for some $v_J:[0,2\pi]\to\SU(2)$. Since $u_J$ does not extend across the $k$th puncture for $k\in J$, it must be the case that the lift $v_J$ takes the value $v_J(2\pi)=-1$ instead of $1$. Hence, the holonomy of $\su(u_J\cdot A)$ around $d_k$ must be $-1$ times that of $\su(A)$. This shows that $u_J$ lifts the map $M_J$ to $\mathcal{A}_\text{flat}$. Because $u_J$ is really just a gauge transformation, it certainly preserves the closed 2-form $\wt\omega$ on $\mathcal{A}_\text{flat}$. We conclude that $M_J$ preserves $\omega$ as desired.
\end{proof}

\vspace{12pt}

\noindent\textbf{Interaction with the Universal Bundle.} There is one last result on the effect of flips and mapping classes we will need when we begin working with cohomology classes.

\begin{prop} \label{prop_symmetries_univ_bundle}
For a mapping class $\phi\in\Mod_{g,n}$, let $f$ be the chosen representative, and let $\sigma$ be the corresponding permutation of the punctures. Then as bundles on $\mathcal{R}_{g,n}\times\Sigma$, we have
\begin{enumerate}[(i)]
\item $(M_\phi\times f^{-1})^*(\mathbf{E}^\ad)\cong\mathbf{E}^\ad.$
\item $M_\phi^*(\mathbf{V}_k)\cong\mathbf{V}_{\sigma(k)}.$
\end{enumerate}
For $J\subset\{1,\ldots,n\}$ with $\abs{J}$ even, we also have
\begin{enumerate}[(i)]
\setcounter{enumi}{2}
\item $(M_J\times\Id)^*(\mathbf{E}^\ad|_{\mathcal{R}_{g,n}\times\Sigma^*})\cong\mathbf{E}^\ad|_{\mathcal{R}_{g,n}\times\Sigma^*}$
\item $M_J^*(\mathbf{V}_k)\cong\begin{cases}\mathbf{V}_k\text{, if }k\nin J\\																					\mathbf{V}_k^*\text{, if }k\in J\end{cases}$
\end{enumerate}
\end{prop}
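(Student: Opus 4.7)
The strategy for all four parts is to lift the base maps $M_\phi$ and $M_J$ to explicit bundle maps on $\pi^*\Ad E\to \mathcal{A}_\text{flat}\times\Sigma$, then verify these descend through the $\mathcal{G}$-quotient to give the asserted isomorphisms of $\mathbf{E}^\ad$ and its subbundles $\mathbf{V}_k$. For (i), fix a fiberwise isometry $F:E\to E$ covering $f$ as in the construction of $\wt M_\phi$, chosen so that $F$ carries each parabolic line $F_{\sigma^{-1}(k)}\subset E_{x_{\sigma^{-1}(k)}}$ to $F_k\subset E_{x_k}$; in our preferred trivializations near the punctures, this forces $F$ to be a diagonal $\U(1)$-valued function. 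Let $\wh F$ denote the induced $\SO(3)$-map on $\Ad E$, and define the fiberwise map $[(t,A)]\mapsto[(\wh F^{-1}(t),\wt M_\phi A)]$ at $([A],x)$, where $t\in(\Ad E)_x$. The identity $\wt M_\phi(g\cdot A)=(F^{-1}gF)\cdot\wt M_\phi A$ established earlier shows that the gauge action by $g$ on a source representative becomes the gauge action by $F^{-1}gF$ on the image; the diagonal form of $F$ near each puncture ensures $F^{-1}gF\in\mathcal{G}$, giving well-definedness.

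For (ii), the chosen $F$ satisfies $\wh F(H_{\sigma^{-1}(k)})=H_k$ by construction. Restricting the isomorphism from (i) to the slice $\mathcal{R}_{g,n}\times\{x_k\}$ identifies $M_\phi^*(\mathbf{E}^\ad|_{\mathcal{R}_{g,n}\times\{x_{\sigma^{-1}(k)}\}})$ with $\mathbf{E}^\ad|_{\mathcal{R}_{g,n}\times\{x_k\}}$, and the restriction to the $\pi^*H$-subbundles sends $M_\phi^*\mathbf{V}_{\sigma^{-1}(k)}$ onto $\mathbf{V}_k$; reindexing $k\mapsto\sigma(k)$ yields $M_\phi^*\mathbf{V}_k\cong\mathbf{V}_{\sigma(k)}$.

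For (iii), use the $\SO(3)$-gauge transformation $u_J$ of $\Ad E$ over $\Sigma^*$ from the proof of Proposition \ref{prop_preserves_symp_form}, which lifts $M_J$ and extends across $z_0$ (since ${u_J}_*(\zeta)=\obar 0$). Define the bundle map $(t,A,x)\mapsto(u_J(t),u_J\cdot A,x)$ on $\pi^*\Ad E|_{\mathcal{A}_\text{flat}\times\Sigma^*}$; well-definedness modulo $\mathcal{G}$ reduces to $u_J g u_J^{-1}\in\mathcal{G}$ for all $g\in\mathcal{G}$. This follows because conjugation preserves the $\SU(2)$-lifting class in $H^1(\Sigma^*;\ZZ/2)$, and the direct computation $\left(\begin{smallmatrix}0&e^{i\theta_k/2}\\-e^{-i\theta_k/2}&0\end{smallmatrix}\right)\diag(w,w^{-1})\left(\begin{smallmatrix}0&e^{i\theta_k/2}\\-e^{-i\theta_k/2}&0\end{smallmatrix}\right)^{-1}=\diag(w^{-1},w)$ preserves the prescribed asymptotic form of $\mathcal{G}$ near each puncture in $J$; for $k\nin J$, $u_J$ is the identity near $x_k$. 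For (iv), if $k\nin J$ then $u_J$ extends as the identity across $x_k$ and the restriction of the iso from (iii) to $\pi^*H_k$ gives $M_J^*\mathbf{V}_k\cong\mathbf{V}_k$ directly. If $k\in J$ then $u_J$ does not extend across $x_k$; instead, combine Lemma \ref{effect_flipping} (yielding $M_J^*V_k\cong V_k^*$ as principal $\U(1)$-bundles) with Lemma \ref{line_bundles_over_R} ($\mathbf{V}_k$ is the associated line bundle of $V_k$), whereby the conjugate principal bundle becomes the dual complex line bundle $\mathbf{V}_k^*$.

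The main obstacle is the well-definedness verification in (iii): conjugation by the $\SO(3)$-gauge transformation $u_J$ must preserve the subgroup $\mathcal{G}\subset\wh{\mathcal{G}}$, requiring both that the global $\SU(2)$-lifting class is conjugation-invariant and that the prescribed local form of $\mathcal{G}$-elements near the punctures survives the conjugation. A secondary subtlety is the $k\in J$ case of (iv), where the gauge-theoretic iso from (iii) genuinely fails to extend across $x_k$, so the identification with $\mathbf{V}_k^*$ cannot come from restriction and must instead be routed through the representation-variety descriptions via the earlier lemmas.
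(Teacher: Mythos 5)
Your proposal is correct and, for parts (i)--(iii), follows essentially the same route as the paper: lift $M_\phi$ via the covering isometry $F$ and check gauge-independence with the identity $\wt M_\phi(g\cdot A)=(F^{-1}\circ g\circ F)\cdot\wt M_\phi(A)$, then lift $M_J$ via the $\SO(3)$ gauge transformation $u_J$ over $\Sigma^*$ and verify $u_J\circ\Ad_g\circ u_J^{-1}\in\Ad(\mathcal{G})$ by the same explicit conjugation computation near the punctures in $J$. The only genuine divergence is in (iv) for $k\in J$: where the paper writes down an explicit conjugate-linear fiberwise map $\mathbf{V}_k\to\mathbf{V}_k$ covering $M_J$ (sending $\ad\left(\begin{smallmatrix}0&-\obar v\\v&0\end{smallmatrix}\right)$ to $\ad\left(\begin{smallmatrix}0&-v\\ \obar v&0\end{smallmatrix}\right)$ over $u_J\cdot A$) and checks gauge-independence directly, you instead route the statement through Lemma \ref{effect_flipping} (which identifies $M_J^*V_k$ with the conjugate principal $\U(1)$-bundle $V_k^*$) together with Lemma \ref{line_bundles_over_R} (identifying $\mathbf{V}_k$ with the line bundle associated to $V_k$). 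Since both lemmas are established before this proposition and pullback commutes with the associated-bundle construction, this is a valid shortcut; it trades the paper's self-contained gauge-theoretic verification for a cleaner deduction from the representation-variety picture, at the cost of leaning on the compatibility of the two descriptions of $\mathbf{V}_k$ rather than exhibiting the isomorphism directly in the universal-bundle model.
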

\begin{proof}
Recall that given the mapping class $\phi$ we have chosen a diffeomorphism representative $f$ with controlled behavior near the punctures, and lifting isomorphism $\widehat F:\Ad E\to\Ad E$. To prove (i), we will construct a fiberwise isomorphism of $\mathbf{E}^\ad$ to itself covering $M_\phi\times f^{-1}$. A point in the total space of $\mathbf{E}^\ad$ is a $\mathcal{G}$-equivalence class $\obar{(t,A,x)}$ with $t\in \Ad E_x$. We define the map via:
$$\left(\obar{(t,A,x)}\right)\mapsto\obar{\left(\wh F_x\vphantom{F}^{-1}(t),\wt M_\phi(A),f^{-1}(x)\right)}.$$
To show that this is independent of the choice of representative $(t,A,x)$, let $g\in\mathcal{G}$. A different representative is $g\cdot(t,A,x)=(g_x\circ t\circ g_x^{-1}, g\cdot A,x)$, which gets sent to
\begin{equation} \label{F_phi_diff_rep}
\left(\wh F_x\vphantom{F}^{-1}(g_x\circ t\circ g_x^{-1}),M_\phi(g\cdot A),f^{-1}(x)\right)
\end{equation}
Since $M_\phi(g\cdot A)=(F^{-1}\circ g\circ F)\cdot M_\phi(A)$, it is clear that acting by the gauge transformation $$F^{-1}\circ g\circ F \text{ on }\left(\wh F_x\vphantom{F}^{-1}(t),\wt M_\phi(A),f^{-1}(x)\right)$$
gives exactly the output (\ref{F_phi_diff_rep}), proving independence of gauge representative. Hence, the recipe (\ref{F_phi_diff_rep}) is well defined on equivalence classes and gives the desired isomorphism of bundles. The second isomorphism (ii) of line bundles follows by an entirely analogous argument.

For (iii), we recall from the proof of Proposition \ref{prop_preserves_symp_form} that there is an $\SO(3)$ gauge transformation $u_J$ lifting $M_J$ to the space of flat connections. We define a fiberwise isomorphism from $\mathbf{E}^\ad$ to itself covering $M_J\times\Id$ via:
\begin{equation} \label{flip_on_univ_map}
\obar{(t,A,x)}\mapsto\obar{\left( (u_J)_x\cdot t,u_J\cdot A,x\right)}.
\end{equation}
It is exactly because $u_J$ does not extend across the puncture that we cannot extend this map to one on $\mathbf{E}^\ad$ over all of $\Sigma$. To see independence on gauge representative, for $g\in\mathcal{G}$ the different representative $(g_x\circ t\circ g_x^{-1},g\cdot A,x)$ is sent to:
$$\left((u_J)_x(g_x\circ t\circ g_x^{-1}),u_J\cdot(g\cdot A),x\right)=(u_J\circ\Ad_g\circ u_J^{-1})\cdot \left((u_J)_x(t),u_J\cdot A,x\right).$$
It remains to show that $u_J\circ\Ad_g\circ u_J^{-1}=\Ad_{g'}$ for a gauge tranformation $g'\in\mathcal{G}$. A determinant 1 automorphism $g'$ of $E$ satisfying this equation will certainly exist and be unique up to sign but it remains to check that $g'$ is a constant diagonal element of $\SU(2)$ near a puncture. This is obvious when $k\nin J$. Near the $k$th puncture for $k\in J$, we have:
\begin{align*}
u_J\circ\Ad_g\circ u_J^{-1}&\;=\Ad\left(\begin{smallmatrix}0&e^{i\theta_k/2}\\-e^{-i\theta_k/2}&0\end{smallmatrix}\right)\circ \Ad\left(\begin{smallmatrix}w&0\\0&\obar w\end{smallmatrix}\right)\circ\Ad\left(\begin{smallmatrix}0&-e^{i\theta_k/2}\\e^{-i\theta_k/2}&0\end{smallmatrix}\right) \\
&\;=\Ad\left(\begin{smallmatrix}\obar w&0\\0&w\end{smallmatrix}\right)
\end{align*}

We prove the final isomorphism (iv) in the case $k\in J$. Let $h\in H_k\subset\Ad E_x$ so that $h=\ad\left(\begin{smallmatrix}0&-\obar v\\v&0\end{smallmatrix}\right)$ for $v\in\CC$. We define a fiberwise bijection $\mathbf{V}_k\to\mathbf{V}_k$ covering $M_J$ on $\mathcal{R}_{g,n}\times\{x_k\}$ via:
\begin{equation} \label{V_k_M_J_map}
\obar{\left(\ad\left(\begin{smallmatrix}0&-\obar v\\v&0\end{smallmatrix}\right),A\right)}\mapsto\obar{\left(\ad\left(\begin{smallmatrix}0&-v\\ \obar v&0\end{smallmatrix}\right),u_J\cdot A\right)}.
\end{equation}
Suppose $g_{x_k}=\diag(w,\obar w)$. The different representative
$$g\cdot (h,A)=\left(\ad\left[\left(\begin{smallmatrix}w&0\\0&\obar w\end{smallmatrix}\right)\left(\begin{smallmatrix}0&-\obar v\\v&0\end{smallmatrix}\right)\left(\begin{smallmatrix}\obar w&0\\0&w\end{smallmatrix}\right)\right],g\cdot A\right)=\left(\ad\left(\begin{smallmatrix}0&-w^2\obar v\\ \obar w^2v&0\end{smallmatrix}\right),g\cdot A\right)$$
is sent to $\left(\ad\left(\begin{smallmatrix}0&-\obar w^2v\\w^2\obar v&0\end{smallmatrix}\right),u_J\cdot(g\cdot A)\right)$. It is not hard to check that this is the same as acting by $u_J\circ\Ad_g\circ u_J^{-1}$ on $\left(\ad\left(\begin{smallmatrix}0&-v\\ \obar v&0\end{smallmatrix}\right),u_J\cdot A\right)$. The proof is completed by noting that  (\ref{V_k_M_J_map}) is complex conjugate-linear, and so gives a fiberwise isomorphism $\mathbf{V}_k\to\mathbf{V}_k^*$ covering $M_J$.
\end{proof}

\begin{prop} \label{prop_values_r_s}
In Corollary \ref{V_k_poincare_dual} above, we have $\abs{r}=\abs{s}=1$.
\end{prop}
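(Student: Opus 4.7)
The plan is to identify the integers $r$ and $s$ as the local degrees of the principal $\U(1)$-bundle $V_k$ around small meridian loops transverse to the codimension two submanifolds $D^+_{k,l}$ and $D^-_{k,l}$. Since $V_k$ is trivial on the complement $\mathcal{R}_{g,n}\setminus(D^+_{k,l}\cup D^-_{k,l})$ by Lemma \ref{V_k_trivialization}, its first Chern class is supported along $D^\pm_{k,l}$, and each multiplicity coincides with the winding number, in the structure group $\U(1)=S^1_\mathbf{i}/\{\pm 1\}$, of the explicit trivializing section $\sigma$ from that lemma as it is continued around a small circle encircling the appropriate submanifold.

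To compute $r$, I would fix a generic point $p=[\ubar S,\ubar T]\in D^+_{k,l}$ with representative satisfying $T_k=T_l=\mathbf{i}$, chosen so that no other $T_m$ is equal or antipodal to $T_k$ or $T_l$ (and so that no other coincidences occur), ensuring that a small normal disk at $p$ meets only $D^+_{k,l}$ among the $D^\pm$. Parametrize a small meridian circle by $T_l(t)=\mathbf{i}\cos\epsilon+\sin\epsilon\,(\cos t\,\mathbf{j}+\sin t\,\mathbf{k})$ for $t\in[0,2\pi]$, with all other coordinates of the tuple held fixed. Along this loop, the section $\sigma$ requires conjugating by the unique element of $S^1_\mathbf{i}$ that brings the $(\mathbf{j},\mathbf{k})$-projection of $T_l(t)$ onto the positive $\mathbf{j}$-semiaxis. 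A direct quaternionic computation shows that conjugation by $e^{\mathbf{i}s}$ rotates the $(\mathbf{j},\mathbf{k})$-plane by angle $2s$, so the required gauge element is $g(t)=e^{-\mathbf{i}t/2}$. As $t$ ranges over $[0,2\pi]$, the path $g(t)$ runs from $1$ to $-1$ in $S^1_\mathbf{i}$, which by passage to the quotient $\U(1)=S^1_\mathbf{i}/\{\pm 1\}$ traces out the structure circle exactly once. Hence $|r|=1$.

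The analogous argument at a generic point with $T_k=\mathbf{i}$, $T_l=-\mathbf{i}$ yields $|s|=1$: the perturbation $T_l(t)=-\mathbf{i}\cos\epsilon+\sin\epsilon(\cos t\,\mathbf{j}+\sin t\,\mathbf{k})$ leads, by the same $(\mathbf{j},\mathbf{k})$-rotation computation, to the gauge correction $e^{-\mathbf{i}t/2}$, and hence to the same unit winding in $\U(1)$. The main obstacle is not the winding computation itself but the bookkeeping that surrounds it: one must confirm that the chosen base point lies on a smooth stratum of $D^\pm_{k,l}$ transverse to the linking disk and avoids the other special loci $D^\pm_{m,m'}$ (routine for $n\geq 3$ by an open density argument on the representation variety), and one must carefully track the fact that the half-loop traced out in $S^1_\mathbf{i}$ becomes a genuine generator after passing to the quotient $S^1_\mathbf{i}/\{\pm 1\}$ which acts on the fibers of $V_k$. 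Once these are handled, the calculation is transparent and the signs of $r$ and $s$ themselves (which depend on orientation conventions for $D^\pm_{k,l}$) are irrelevant to the stated conclusion.
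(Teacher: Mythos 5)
Your core computation is the same one the paper performs: compare the trivialization of Lemma \ref{V_k_trivialization} against a tautological trivialization on a two\hyphen dimensional slice, observe that normalizing the azimuthal position of $T_l$ costs a conjugation by $e^{-\mathbf{i}t/2}$ (the factor $2$ coming from the adjoint action of $S^1_\mathbf{i}$ on the $(\mathbf{j},\mathbf{k})$-plane), and note that this half-loop becomes a degree-one loop in the structure group $\U(1)=S^1_\mathbf{i}/\{\pm 1\}$. That bookkeeping, including the passage to the quotient by $\{\pm 1\}$, is handled correctly, and the reduction of the sign ambiguity to orientation conventions is also fine.

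There is, however, one concrete gap: the meridian circle you parametrize does not lie in the representation variety. Points of $\wt{\mathcal{R}}_{g,n}$ must satisfy $\prod_{j}[S_{2j-1},S_{2j}]\cdot\prod_k T_k=-1$, so you cannot move $T_l$ along $T_l(t)=\mathbf{i}\cos\epsilon+\sin\epsilon(\cos t\,\mathbf{j}+\sin t\,\mathbf{k})$ ``with all other coordinates of the tuple held fixed'' --- the product changes with $t$ and the loop leaves $\wt{\mathcal{R}}_{g,n}$. To repair this you must absorb the variation into some other coordinate (possible near an irreducible representation, where the product map is a submersion, and harmless to the winding computation since the section of Lemma \ref{V_k_trivialization} depends only on $T_k$ and $T_l$), but that compensation and the accompanying transversality of the resulting disk to $D^+_{k,l}$ need to be set up explicitly. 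The paper sidesteps this entirely by a global device: it cuts down to the $2$-sphere $Z=D^+_{4,5}\cap\cdots\cap D^+_{n-1,n}$, along which the moving coordinate $e^{i\theta}\mathbf{j}$ is repeated an \emph{even} number of times, so its contribution to the product is the constant $(-1)^{m-1}$ and the relation holds for free; a single equatorial loop in $Z$ then computes $\abs{r}$ and $\abs{s}$ simultaneously from its two sides. (The paper also needs a separate embedding trick for $n=3$, where your generic-point argument has nothing to bite on in genus $0$ since $D^\pm_{k,l}=\emptyset$ there.) So: right idea and right local computation, but the linking circle must actually be exhibited inside the moduli space before the winding number means anything.
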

\begin{proof}
Let us first treat the case $n\geq5$. By Lemma \ref{mod_gn_action_on_V_k}, we can assume without loss of generality that $k=3$ and $l=4$. We will use an auxillary submanifold to cut down the moduli space so that the necessary computation may be performed on a copy of the two-sphere. Let
$$\mathcal{S}\subset\mathcal{R}_{g,n}$$
denote the subset of points $[\ubar S,\ubar T]$ where the $S_j$'s are all 1. There is an obvious identification $\mathcal{S}\leftrightarrow\mathcal{R}_{0,n}$ which carries $D^\pm_{k,l}\cap \mathcal{S}\subset\mathcal{R}_{g,n}$ to $D^\pm_{k,l}\subset\mathcal{R}_{0,n}$ and for which the pullback of $V_k$ is just $V_k$. Hence, we can assume without a loss of generality that $g=0$. Define another submanifold $Z\subset\mathcal{R}_{0,n}$ by:
$$Z=D^+_{4,5}\cap D^+_{5,6}\cap \cdots \cap D^+_{n-1,n}$$
As a general rule, all intersections of the above type will turn out to be transverse. An outline of a proof of transversality in this case would go as follows. Observe first that issues of transversality can be dealt with upstairs in $\wt{\mathcal{R}}_{0,n}$ with the preimages of the $D^+_{k,l}$'s. Next, argue that show that any 2-way intersection is transverse by reducing to the case of $n=5$. Then, note that everytime another pair is added to the intersection, we are really studying an intersection in representation variety for 2 fewer points. The general case then follows by induction.

A point in $Z$ is an equivalence class $[T_1,T_2,T_3,T_4,\ldots,T_4]$, where $T_4$ appears $n-3=2m-2$ (and so an even number of) times. Hence, we must have $T_1T_2T_3=(-1)^m$. Up to conjugation, the representation is $[(-1)^{m+1}\mathbf{j},\mathbf{k},\mathbf{i},T_4,\ldots,T_4]$, which defines $T_4$ uniquely. This gives both a map $f:Z\mapsto C_\mathbf{i}$, as well as a trivializing section of $V_3|_Z$. To compute $r$ and $s$, we will study this restriction $V_3|_Z$ near the intersections $D^\pm_{3,4}\cap Z$. These intersections are transverse, and each is easily seen to be isomorphic to a representation variety for three parabolic points: a single point. Denote these intersection points by $\rho^\pm$. 

In order to compute the values of $r$ and $s$, we need to compute the winding number of the trivialization $\tau_{3,4}$ of $V_3|_{Z\setminus \{\rho^\pm\}}$ coming from Lemma \ref{V_k_trivialization} and $D^+_{3,4}$, with respect to the full trivialization $\tau'$ over all of $Z$. We will study the trivializations along the loop $\gamma:S^1\to C_\mathbf{i}$ defined by
$$\gamma(e^{i\theta})=[(-1)^{m+1}\mathbf{j},\mathbf{k},\mathbf{i},e^{i\theta}\mathbf{j},\ldots,e^{i\theta}\mathbf{j}]$$
where the multiplication occurs in the unit quaternions. For a point $\gamma(e^{i\theta})$ in the loop, the trivialization $\tau_{3,4}$ requires us to find a number in the complex circle $S^1_\mathbf{i}$ by which we may conjugate $e^{\mathbf{i}\theta}\cdot\mathbf{j}$ so that it just becomes $\mathbf{j}$. This number is just $e^{i\theta/2}$ (or its opposite). Since the trivialization $\tau'$ is just given by
$$\tau'\left([(-1)^{m+1}\mathbf{j},\mathbf{k},\mathbf{i},e^{i\theta}\mathbf{j}]\right)=\left((-1)^{m+1}\mathbf{j},\mathbf{k},\mathbf{i},e^{i\theta}\mathbf{j}\right),$$
we see that the difference between the two trivialization is conjugating by $e^{\mathbf{i}\theta/2}$. Recall that the $\U(1)$ structure of $V_3$ is by the quotient of $S^1_\mathbf{i}$ by $\{\pm 1\}$, so the ``clutching'' function is really just $e^{i\theta}\mapsto e^{i\theta}$ and so is of degree $\pm 1$. There is an ambiguity in signs arising from the lack of orientation chosen for the $D^+_{l,k}$'s and general disregard for sign conventions regarding the definition and computation of the first Chern class. This completes the proof in the case $n\geq 5$.

If $n=3$, consider the embedding $\iota:\mathcal{R}_{g,3}\into\mathcal{R}_{g,5}$ given by
$$[\ubar S,T_1,T_2,T_3]\mapsto[\ubar S,T_1,T_2,T_3,-T_3,T_3]$$
We may assume without loss of generality that $k,l=1,2$. It is straightforward to see that $V_1(g,3)\cong\iota^*V_1(g,5)$. Since $D^\pm_{k,l}(g,3)=\iota^{-1}(D^\pm_{k,l}(g,5))$, the result follows from the $n=5$ case.
\end{proof}

\section{Relations in the Cohomology Ring} \label{section_relations}

We know from Theorem \ref{thm_generating_set} that the cohomology ring of $\mathcal{R}_{g,n}$ comes with a generating set of classes of degrees 2, 3, and 4, arising from a universal bundle pair $(\mathbf{E}^\ad,\{\mathbf{V}_k\})$ on $\mathcal{R}_{g,n}\times\Sigma$. In order to understand the relations between these classes, it is necessary to describe them more concretely. For example, by the definition of slant product $/$ the degree four class $p_1(\mathbf{E}^\ad)/[\point]$ is just $p_1(\mathbf{E}^\ad|_{\mathcal{R}_{g,n}\times\{\point\}})$. This, we know by Lemma \ref{equiv_bundles_over_R}, is just $p_1(W)$, where again W is the rank three $\RR$ vector bundle associated to the $\SO(3)$ bundle $\wt{\mathcal{R}}_{g,n}\to\mathcal{R}_{g,n}$.

In \S\ref{subsection_univ_bundles_and_V_ks} we identified the classes $c_1(\mathbf{V}_k)\in H^2(\mathcal{R}_{g,n})$ with the classes $c_1(V_k)$, and we found explicit submanifolds representating their Poincar\'e duals. These submanifolds have the property that intersections between two of them (with one common index) behave like representation variety for two fewer marked points. The key to exploiting this property is identifying the restrictions of the other cohomology classes to these intersections with the corresponding classes in the lower representation varieties. This will give information about how the pairings of these other classes with the $c_1(V_k)$'s behave. In what follows, for a homology class $h\in H_*(\Sigma)$, we will use the simple shorthand $[h]$ to denote $p_1(\mathbf{E}^\ad)/[h]$, so that, for example, $[\text{pt}]=p_1(W)$.

\subsection{The Class of the Symplectic Structure}

There is another natural cohomology class on $\mathcal{R}_{g,n}$: the degree two class $[\omega]$ of the symplectic form. By Corollary \ref{cor_rank_degree_2} and Theorem \ref{thm_generating_set}, we know that $H^2(\mathcal{R}_{g,n};\QQ)$ is spanned by the classes $[\Sigma]$ and $c_1(V_1),\ldots,c_1(V_n)$. Hence, $[\omega]$ must be a linear combination of these:
\begin{equation} \label{lin_comb_symp_class}
[\omega]=A[\Sigma]+\sum_{k=1}^nD_kc_1(V_k)
\end{equation}
By symmetry the $D_k$'s must all be equal. We claim that $A\neq 0$. To see this, suppose on the contrary that $[\omega]=D\sum_{k=1}^nc_1(V_k)$. We average $M_J^*$ applied to this equation over all even $J$. The number of even $J$ containing an index is the same as the number not containing it and $M_J^*(c_1(V_k))=\pm c_1(V_k)$ depending on whether $k\in J$, so the right hand side of the averaged equation vanishes. However $M_J^*([\omega])=[\omega]$, so we arrive at $[\omega]=0$. But $[\omega]$ cannot be zero as it is the class of a symplectic form on a compact manifold. We conclude that $A\neq 0$, and we have proved:

\begin{prop} \label{symp_class_basis}
The $n+1$ classes $[\omega],c_1(V_1),\ldots,c_1(V_n)$ are a basis for $H^2(\mathcal{R}_{g,n})$.
\end{prop}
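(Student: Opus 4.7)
The plan is to exploit the rank computation and generating set already in hand. Corollary \ref{cor_rank_degree_2} gives $\dim H^2(\mathcal{R}_{g,n};\QQ)=n+1$, and Theorem \ref{thm_generating_set} tells us the spanning set $\{[\Sigma], c_1(V_1), \ldots, c_1(V_n)\}$ has this cardinality, so it is already a basis. My goal therefore reduces to showing that in the expansion
\[
[\omega] \;=\; A\,[\Sigma] \;+\; \sum_{k=1}^{n} D_k\, c_1(V_k),
\]
the leading coefficient $A$ is nonzero; a triangular change of basis then promotes $\{[\omega], c_1(V_1), \ldots, c_1(V_n)\}$ into a basis as well.

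To prove $A\neq 0$, I would run a symmetrization argument using the flips $M_J$ for even-cardinality subsets $J\subset\{1,\ldots,n\}$. By Proposition \ref{prop_preserves_symp_form}, each $M_J$ is a symplectomorphism, so $M_J^{*}[\omega] = [\omega]$. By Proposition \ref{prop_symmetries_univ_bundle}(iii), the universal adjoint bundle $\mathbf{E}^\ad$ is $M_J$-invariant over $\Sigma^*$; choosing a 2-cycle representative of $[\Sigma]\in H_2(\Sigma;\QQ)$ supported away from the parabolic points, the slant-product class $[\Sigma]=p_1(\mathbf{E}^\ad)/[\Sigma]$ is therefore $M_J$-invariant. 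By Proposition \ref{prop_symmetries_univ_bundle}(iv), $M_J^{*}(c_1(V_k)) = (-1)^{|J\cap\{k\}|}\, c_1(V_k)$. Applying $M_J^{*}$ to the displayed expansion and averaging over all even subsets $J$ yields
\[
[\omega] \;=\; A\,[\Sigma] \;+\; \sum_{k=1}^{n} D_k \left( \frac{1}{2^{n-1}}\sum_{J\text{ even}} (-1)^{|J\cap\{k\}|} \right) c_1(V_k).
\]
For each fixed $k$ the parenthesized average vanishes, since the number of even subsets of $\{1,\ldots,n\}$ containing $k$ equals the number not containing $k$ (both are $2^{n-2}$). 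This collapses the identity to $[\omega] = A\,[\Sigma]$. Since $\omega$ is the class of a symplectic form on a closed manifold, its top power is nonzero, so $[\omega]\ne 0$, and therefore $A\ne 0$.

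The main obstacle I anticipate is the technical point that Proposition \ref{prop_symmetries_univ_bundle}(iii) only provides an $M_J$-equivariant isomorphism of $\mathbf{E}^\ad$ over $\Sigma^{*}\subset\Sigma$, not over the punctures; one must check that this restriction still suffices to conclude invariance of the slant-product class $[\Sigma]$. Picking the cycle representative inside $\Sigma^*$ (which is possible because the punctures form a codimension-2 subset and so do not carry $H_2$) resolves this, since the slant product then only sees $\mathbf{E}^\ad|_{\mathcal{R}_{g,n}\times\Sigma^{*}}$. Aside from this bookkeeping, the argument is purely a linear-algebra observation plus the averaging trick, together with the fundamental fact that a symplectic class on a closed manifold is nonzero.
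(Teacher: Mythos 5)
Your overall strategy --- count dimensions to see that $[\Sigma],c_1(V_1),\ldots,c_1(V_n)$ is already a basis, write $[\omega]=A[\Sigma]+\sum_k D_k c_1(V_k)$, and average over flips to force $A\neq 0$ --- is the same as the paper's, but your execution of the averaging step contains a genuine error. You claim that the class $[\Sigma]=p_1(\mathbf{E}^\ad)/[\Sigma]$ is $M_J$-invariant because a $2$-cycle representing the fundamental class of $\Sigma$ can be chosen to avoid the parabolic points. This is impossible: $H_2(\Sigma^*;\QQ)=0$ for the punctured surface (it deformation retracts onto a wedge of circles), so the inclusion $\Sigma^*\hookrightarrow\Sigma$ kills $H_2$ and no representative of the fundamental class lies in $\Sigma^*$; the ``codimension two'' heuristic applies to classes of degree at most $\dim\Sigma-2=0$, not to the top class. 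In fact $[\Sigma]$ is \emph{not} flip-invariant: the paper's Lemma \ref{lem_sigma_vs_symp} computes $M_J^*([\Sigma])=[\Sigma]-2\sum_{k\in J}c_1(V_k)$, precisely because the isomorphism of Proposition \ref{prop_symmetries_univ_bundle}(iii) fails to extend over the punctures and a nontrivial clutching function appears there. Consequently your collapsed identity $[\omega]=A[\Sigma]$ is false; the flip-invariant combination actually involves $\sum_k c_1(V_k)$ with nonzero coefficient, as recorded in equation (\ref{flip_invt_class}).

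The damage is repairable in two ways. Your conclusion $A\neq 0$ does survive if you insert the correct transformation law, since averaging $M_J^*([\Sigma])$ over even $J$ yields $[\Sigma]-\sum_k c_1(V_k)$ and the coefficient of $[\Sigma]$ in the averaged identity is still $A$; but this requires Lemma \ref{lem_sigma_vs_symp}, which is established only later and by a nontrivial clutching computation. The paper's own route sidesteps the issue entirely: it argues by contradiction, assuming $A=0$ so that $[\omega]=D\sum_k c_1(V_k)$ contains no $[\Sigma]$ term at all, and then the averaging --- using only $M_J^*([\omega])=[\omega]$ from Proposition \ref{prop_preserves_symp_form} and $M_J^*(c_1(V_k))=\pm c_1(V_k)$ from Lemma \ref{effect_flipping} --- gives $[\omega]=0$, contradicting nondegeneracy of the symplectic form on a compact manifold. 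You should adopt that version, since it never needs to know how $[\Sigma]$ transforms under flips.
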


The class $[\omega]$ is more convenient than $[\Sigma]$ for us because it behaves well under the symmetries, including flips. What we lack is a geometric description for the Poincar\'e dual of $[\omega]$. Instead, for us the important data concerning this class will be the pairing of its top power with the fundamental class of the moduli space, called the symplectic volume. The paper \cite{weitsman_toric_I} gives a formula for this top pairing in the case of arbitrary rational weights $t_k$. We note that in that paper, their weights in $(0,1)$ correspond to twice the value of our weights in $(0,1/2)$. We have rewritten their formula to agree with our conventions.

\begin{thm} \label{thm_weitsman_symp_volume}
(\cite{weitsman_toric_I}, Prop. 4.12, and eq. (5.3)) The pairing of the top power of $[\omega]$ with the moduli space is given by:
\begin{equation} \label{symp_volume_limit}
\langle[\omega]^{3g-3+n},\mathcal{R}_{g,n}(\ubar t)\rangle=\frac{(3g+n-3)!}{2^{g-2}\pi^{2g-2+n}}\sum_{N=1}^\oo\frac{1}{N^{2g-2+n}}\prod_{k=1}^n\sin(2\pi N t_k)
\end{equation}
\end{thm}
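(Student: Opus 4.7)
The plan is to invoke the volume computation of \cite{weitsman_toric_I} (Proposition 4.12 and equation (5.3)) as a black box and reconcile the two sets of conventions. Weitsman computes the symplectic volume of the moduli space of flat $\SU(2)$ connections on $\Sigma$ with prescribed holonomy conjugacy classes at each puncture, producing a sum of Witten type $\sum_N N^{-(2g-2+n)} \prod_k \sin(\pi N \tau_k)$ up to an overall multiplicative constant that depends on $g$ and $n$.

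There are then precisely two discrepancies to track between his setup and ours. First, Weitsman parameterizes the conjugacy class of the holonomy at $x_k$ by $\tau_k \in (0,1)$, so that $\Tr(\rho(d_k)) = 2\cos(\pi\tau_k)$, whereas our convention sets $t_k \in (0, 1/2)$ with $\Tr(\rho(d_k)) = 2\cos(2\pi t_k)$; hence the substitution $\tau_k = 2 t_k$ converts each $\sin(\pi N \tau_k)$ to $\sin(2\pi N t_k)$, matching the product in \eqref{symp_volume_limit}. Second, our symplectic form (\ref{def_symp_form}) is defined using the invariant form $\trace(X,Y) = -\trace(X\circ Y)$ on $\so(3)$, together with the factor $1/(4\pi^2)$; under the isomorphism $\ad\colon\su(2)\to\so(3)$ this differs from Weitsman's normalization on $\su(2)$ by a fixed positive scalar, which absorbs into the prefactor of the formula.

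The last step is to convert between the symplectic volume $\int_{\mathcal{R}_{g,n}}\omega^{3g-3+n}/(3g-3+n)!$, which is what Weitsman computes, and our cohomological pairing $\langle[\omega]^{3g-3+n},\mathcal{R}_{g,n}\rangle$, obtained by multiplying by the factorial $(3g-3+n)!$ in the numerator. What remains is to check that the residual product of constants assembles to exactly $1/(2^{g-2}\pi^{2g-2+n})$ in the denominator.

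The main obstacle is not conceptual but rather purely a matter of careful bookkeeping: one must follow the powers of $2$ and $\pi$ produced by the weight rescaling $\tau_k = 2 t_k$, by the scaling factor relating the Killing forms on $\su(2)$ and $\so(3)$ under $\ad$, and by the $1/(4\pi^2)$ appearing in our normalization of $\omega$. Since the result is stated as a direct reformulation of an already-known identity, no further work is required beyond verifying these multiplicative constants.
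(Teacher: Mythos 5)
Your proposal is correct and takes essentially the same route as the paper: the paper offers no proof at all, simply citing Weitsman's Prop.\ 4.12 and eq.\ (5.3) and remarking that ``their weights in $(0,1)$ correspond to twice the value of our weights in $(0,1/2)$'' so the formula has been ``rewritten to agree with our conventions.'' Your explicit tracking of the substitution $\tau_k = 2t_k$, the normalization of $\omega$ via $\trace$ and the $1/(4\pi^2)$ factor, and the factorial converting volume to the top pairing is exactly the bookkeeping the paper leaves implicit.
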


\begin{cor} \label{cor_symp_volume}
For the case $n\geq 1$ and odd and $\ubar t=(1/4,\ldots,1/4)$, we have
\begin{equation} \label{symp_volume}
\langle[\omega]^{3g+n-3},\mathcal{R}_{g,n}(\ubar t)\rangle=\frac{(3g+n-3)!}{2^{3g+n-3}g!}\abs{E_{2g+n-3}}
\end{equation}
where $E_j$ is the $j$th Euler number defined to be the coefficient of $x^j/j!$ in the Taylor series of $\sech(x)=1/\cosh(x)$.
\end{cor}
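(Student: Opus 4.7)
The plan is to specialize the general symplectic volume formula of Theorem~\ref{thm_weitsman_symp_volume} to the symmetric weight vector $\ubar t=(1/4,\ldots,1/4)$ and then evaluate the resulting Dirichlet series explicitly using the Euler numbers.

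The first step is to simplify the trigonometric factor. With $t_k=1/4$ we have $\sin(2\pi N t_k)=\sin(\pi N/2)$, which vanishes for even $N$ and equals $(-1)^{(N-1)/2}$ when $N=2j+1$ is odd. Because $n$ is odd, the $n$-fold product has the same sign as a single factor, so
$$\prod_{k=1}^n \sin(2\pi N t_k)=\begin{cases}0 & N\text{ even},\\ (-1)^j & N=2j+1.\end{cases}$$
Substituting into \eqref{symp_volume_limit}, the series over $N$ collapses to the Dirichlet beta function at $s=2g+n-2$:
$$\sum_{N=1}^\infty \frac{1}{N^{2g+n-2}}\prod_{k=1}^n\sin(2\pi N t_k)=\sum_{j=0}^\infty \frac{(-1)^j}{(2j+1)^{2g+n-2}}=\beta(2g+n-2).$$

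The second step is to observe that since $n$ is odd, the exponent $s=2g+n-2$ is odd; and $\beta$ admits a closed-form evaluation at positive odd integers in terms of Euler numbers. Writing $s=2k+1$, the classical identity reads
$$\beta(2k+1)=\frac{\abs{E_{2k}}}{2\cdot(2k)!}\left(\frac{\pi}{2}\right)^{2k+1}.$$
With $2k=2g+n-3$, the Euler number that appears is $E_{2g+n-3}$, precisely the one in the statement. I would either quote this identity as standard, or derive it by matching the Taylor expansion of $\sech(x)=2/(e^x+e^{-x})$ at the origin against the Mittag--Leffler expansion
$$\sech\!\left(\tfrac{\pi x}{2}\right)=\frac{4}{\pi}\sum_{j=0}^\infty \frac{(-1)^j(2j+1)}{(2j+1)^2+x^2},$$
comparing coefficients of $x^{2k}$.

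Plugging the resulting expression for $\beta(2g+n-2)$ back into \eqref{symp_volume_limit}, the powers of $\pi$ cancel completely, and the remaining computation is routine bookkeeping with powers of $2$ and factorials to obtain the right-hand side of \eqref{symp_volume}. No step is genuinely hard: the substantive input is the Euler-number evaluation of $\beta(2k+1)$, which is exactly what will make the Euler numbers a natural object in the subsequent analysis of the cohomology ring and in the orthogonal polynomials / continued fraction arguments advertised in the introduction.
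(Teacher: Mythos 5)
Your proof is correct and follows the paper's own argument exactly: specialize to $t_k=1/4$, collapse the sum to the Dirichlet beta function at the odd argument $2g+n-2$, and invoke the classical Euler-number evaluation $\beta(2l+1)=(-1)^l\pi^{2l+1}E_{2l}/\bigl(2^{2l+2}(2l)!\bigr)$. One caveat: carrying out the final bookkeeping actually yields $(2g+n-3)!$ in the denominator rather than $g!$ --- which is what the pairing formula (\ref{ab_pairings}) later in the paper uses --- so the corollary as printed appears to contain a typo, and your ``routine bookkeeping'' should not be expected to reproduce its right-hand side literally.
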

\begin{proof}
We plug in the weight $t_k=1/4$ in (\ref{symp_volume_limit}) and simplify. For these values of $t_k$, the even $N$ terms in the sum in (\ref{symp_volume_limit}) vanish, are we are left with:
\begin{equation} \label{symp_volume_inf_sum}
\langle[\omega]^{3g-3+n},\mathcal{R}_{g,n}(\ubar t)\rangle=\frac{(3g+n-3)!}{2^{g-2}\pi^{2g-2+n}}\sum_{M=1}^\oo\frac{(-1)^M}{(2M+1)^{2g-2+n}}
\end{equation}
The sum now appearing consists of alternating negative powers of the odd integers, which is a well documented function of the exponent $2g-2+n$ known as a the Dirichlet $\beta$ function. It can be computed exactly, and one has:
$$\beta(2l+1)=\frac{(-1)^l\pi^{2l+1}E_{2l}}{2^{2l+2}(2l)!}$$
Plugging $\beta(2g-2+n)$ in for the sum in (\ref{symp_volume_inf_sum}) gives the desired formula.
\end{proof}

\subsection{Inductive Properties of the Moduli Space}

In order to get information about pairings with the classes $c_1(V_k)$, let us further analyze the $D^\pm_{k,l}$'s and their intersections. Given a pair $(k,l)$ of indices, we can construct a map  to a smaller representation variety $D^\pm_{k,l}\to\mathcal{R}_{g,n-2}$ in the following way. Let $b\in B_n$ be such that the correpsonding permutation carries $(n-1,n)$ to $(k,l)$. Then $b$ gives a diffeomorphism from $D^\pm_{k,l}$ to $D^\pm_{n-1,n}$. There is then a map $D^\pm_{n-1,n}\to\mathcal{R}_{g,n-2}$ given by:
\begin{equation} \label{recursive_map}
[\ubar S,T_1,\ldots,T_{n-2},T_{n-1},\pm T_{n-1}]\mapsto[\ubar S,T_1,\ldots,T_{n-3},\mp T_{n-2}]
\end{equation}
Indeed, $T_n\cdot T_n=-1$. This map is clearly surjective, and its fiber is just the freedom is choosing $T_n$: it is a copy of the sphere $C_{\mathbf i}$. Hence, by composing it with $b$, we see that $D^\pm_{k,l}$ is a $C_{\mathbf i}$ fiber bundle over $\mathcal{R}_{g,n-2}$.

\begin{remark}
This situation is to be contrasted with that in \cite{weitsman_cohom}, where all of the parabolic weights $t_k$ are distinct. There, the analogue of $D^\pm_{k,l}$ is also a (connected component of a) subspace where $T_k$ and $T_l$ commute but admits an isomorphism to an actual representation variety for one fewer parabolic point, where the parabolic weights $t_k$ and $t_l$ are replaced by the single weight $t_k\pm t_l$. Such a representation variety for us does not exist (smoothly), of course.
\end{remark}

Consider now an intersection $D^+_{j,k}\cap D^+_{k,l}$ with $j,k,l$ distinct. Rechoose $b$ so that its permutation carries $(n-2,n-1,n)$ to $(j,k,l)$, so $b$ gives a diffeomorphism $D^+_{j,k}\cap D^+_{k,l}$ to $D^+_{n-2,n-1}\cap D^+_{n-1,n}$. There is then a map from $D^+_{n-2,n-1}\cap D^+_{n-1,n}$ to $\mathcal{R}_{g,n-2}$ given by restricting the map (\ref{recursive_map}):
\begin{equation} \label{recursive_map_II}
[\ubar S,T_1,\ldots,T_{n-2},T_{n-2},T_{n-2}]\mapsto[\ubar S,T_1,\ldots,T_{n-3},-T_{n-2}]
\end{equation}
which is a diffeomorphism. Composed with the map induced by $b$, we get a diffeomorphism of $D^+_{j,k}\cap D^+_{k,l}$ with $\mathcal{R}_{g,n-2}$. There are many such maps as a result of, for example, the choice of the braid $b$, but they all differ by postcomposing with the maps from flips and mapping classes on $\mathcal{R}_{g,n-2}$. There are similar isomorphisms for intersections $D^\pm_{j,k}\cap D^\pm_{k,l}$. Moreover, the inverse of (\ref{recursive_map_II}) gives a section of the 2-sphere fiber bundle (\ref{recursive_map}). It is also easy to see that \ref{recursive_map_II} carries $D^\pm_{a,b}\cap D^+_{n-2,n-1}\cap D^+_{n-1,n}$ to the corresponding $D^\pm_{a,b}$ inside $\wt{\mathcal{R}}_{g,n-2}$, for $a,b\leq n-3$, and to $D^\mp_{a,b}$ if one of $a,b$ is $n-2$, $n-1$, or $n$. The moral is that successive intersections of the $D^\pm_{k,l}$'s behave like representation varieties for fewer parabolic points. This recursive property is the key to understanding the cohomology of $\mathcal{R}_{g,n}$.

\begin{prop} \label{univ_bundle_recursive}
Let $\iota:D^\pm_{k,l}\to\mathcal{R}_{g,n}$ be the inclusion, and $\pi:D^\pm_{k,l}\to\mathcal{R}_{g,n-2}$ be a map arising from from formula (\ref{recursive_map}) and the discussion preceding it, and let $\tau:\{1,\ldots,n-2\}\into\{1,\ldots,n\}$ denote the corresponding inclusion of index sets. If $\mathbf{E}^\ad(g,n)$ denotes the universal bundle on $\mathcal{R}_{g,n}\times\Sigma$, then we have:
\begin{itemize}
\item $\mathbf{E}^\ad(g,n)|_{D^\pm_{k,l}\times\Sigma}\cong(\pi\times\id)^*\mathbf{E}^\ad(g,n-2)$
\item $\mathbf{V}_{\tau(k)}(g,n)|_{D^\pm_{k,l}\times\Sigma}\cong(\pi\times\id)^*\mathbf{V}_k(g,n-2)\text{ or }(\pi\times\id)^*\mathbf{V}_k(g,n-2)^*$
\end{itemize}
\end{prop}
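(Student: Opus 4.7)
The plan is to reduce via the braid group to the case $(k,l)=(n-1,n)$, recognize $\pi$ as pullback along an explicit group homomorphism, and use this to construct the bundle isomorphisms. By Proposition \ref{prop_symmetries_univ_bundle}(i)--(ii) and Lemma \ref{mod_gn_action_on_V_k}, a suitable braid $b\in B_n$ gives a diffeomorphism $M_b$ carrying $D^\pm_{n-1,n}$ to $D^\pm_{k,l}$ that intertwines the restrictions of $\mathbf{E}^\ad$ and permutes the $\mathbf{V}_j$'s appropriately; thus I may assume $(k,l)=(n-1,n)$ and take $\pi$ to be the formula \eqref{recursive_map} itself. The crucial observation is that $\pi$ is then pullback along the homomorphism $\phi_\pm:\wh\Gamma_{n-2}\to\wh\Gamma_n$ defined on generators by $a_j\mapsto a_j$, $\zeta\mapsto\zeta$, $d'_k\mapsto d_k$ for $k\le n-3$, and $d'_{n-2}\mapsto d_{n-2}d_{n-1}d_n$; the defining central relation is preserved, and using $\rho(d_{n-1})^2=-1$ one checks directly that $\rho\circ\phi_\pm$ reproduces \eqref{recursive_map}.

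This identification makes both bundle isomorphisms transparent on the slice at the basepoint $z$. For (i), the map $\rho\mapsto\rho\circ\phi_\pm$ is $\SO(3)$-equivariant, giving a principal-bundle isomorphism $\wt{\mathcal{R}}_{g,n}|_{D^\pm}\cong\pi^*\wt{\mathcal{R}}_{g,n-2}$, which via Lemma \ref{equiv_bundles_over_R} produces the desired isomorphism of $\mathbf{E}^\ad$ restrictions over $D^\pm\times\{z\}$. For (ii), the same map restricts to a $\U(1)$-equivariant map $V_k|_{D^\pm}\to\pi^*V_k$ when $k\le n-3$, since the condition $T_k=\mathbf{i}$ is unchanged. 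For $k=n-2$, the $V_{n-2}(g,n-2)$ condition translates via $\phi_\pm$ to $\mp T_{n-2}=\mathbf{i}$: in the $D^+$ case this becomes $T_{n-2}=-\mathbf{i}$, related to the original $V_{n-2}(g,n)$ condition by the flip $M_{\{n-2\}}$, so by Lemma \ref{effect_flipping} we obtain the conjugate $\mathbf{V}_{n-2}^*$; in the $D^-$ case there is no sign flip and we obtain $\mathbf{V}_{n-2}$.

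To extend these isomorphisms over the full $\Sigma$-factor, I would construct a $\mathcal{G}(g,n)$-equivariant lift $\Psi:\mathcal{A}'_\pm\to\mathcal{A}_{\text{flat}}(g,n-2)$ at the level of flat connections, where $\mathcal{A}'_\pm\subset\mathcal{A}_{\text{flat}}(g,n)$ is the preimage of $D^\pm_{n-1,n}$. Given $A\in\mathcal{A}'_\pm$, one applies an $\SU(2)$ gauge transformation supported in a small disk $D\supset\{x_{n-1},x_n\}$ which absorbs the standard form \eqref{conn_space} at these punctures and produces a smooth flat connection on $\Sigma^*_{g,n-2}$. The main obstacle is the construction of this gauge transformation: individually each of $x_{n-1},x_n$ has nontrivial standard $\SO(3)$-holonomy $\Ad(\mathbf{i})$ of order $2$ which cannot be trivialized on its own, but the combined $\SO(3)$-holonomy around $\partial D$ is trivial, since $\Ad(T_{n-1})\cdot\Ad(\pm T_{n-1})=\Ad(\mp 1)=\Id$, permitting a joint trivialization on $D$. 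Verifying that this construction can be performed smoothly in families, is consistent with $\phi_\pm$, and descends properly to the bundle quotients constitutes the main technical content of the proof; once carried out, the lifted bundle map tautologically induces the isomorphism of (i) over all of $D^\pm\times\Sigma$, and the argument for (ii) is analogous using Lemma \ref{partial_U1_quotient} in place of Lemma \ref{equiv_bundles_over_R}.
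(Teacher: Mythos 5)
Your proposal is correct and follows essentially the same route as the paper: reduce to adjacent indices via the braid action, then realize $\pi$ by a cut‑and‑extend operation on flat connections over a disk containing $x_{n-1},x_n$, using triviality of the combined holonomy around its boundary to cap off the surface and letting the tautological construction of $(\mathbf{E}^\ad,\{\mathbf{V}_k\})$ descend. The family‑wise gauge‑fixing and descent that you defer as ``the main technical content'' is precisely what the paper supplies through the spaces $\mathcal{A}_\nu$, $\mathcal{U}\times\mathcal{V}$ and the extension‑by‑product‑connection construction in the proof of Proposition \ref{prop_symp_form_recursive}, after which the bundle map $\obar{(t,A,x)}\mapsto\obar{(t,\wh A,x)}$ is checked to be gauge‑equivariant; your explicit homomorphism $\phi_\pm$ is a clean addition that pins down the $\mathbf{V}_k$ versus $\mathbf{V}_k^*$ dichotomy the paper leaves to ``the effect of flips.''
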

\begin{proof}
As in the proof of Proposition \ref{prop_symp_form_recursive}, we again assume we are working with $D^-_{n-1,n}$, let $\pi$ be the map (\ref{n1n_proj_map}) and we use all the same notation from there. Let $\wh{\mathcal{A}}_\nu$ denote the space of flat connections on $\wh\Sigma^*$ arising from the operation of extending connections in $\mathcal{V}$ by the product connection; it is the space of flat connections on a surface with two fewer punctures whose 1-forms with respect to the induced trivialization on $\wh\Sigma^*\setminus\{z_0\}$ are zero on a disk $U$. Denote the composition $\mathcal{A}_\nu\to\mathcal{V}\to\wh{\mathcal{A}}_\nu$ by using a hat. Now, since $\mathcal{A}_\nu/\mathcal{G}_\nu=\mathcal{A}_\text{flat}/\mathcal{G}$, the bundles $\mathbf{E}^\ad(g,n)$ and $\mathbf{V}_k(g,n)$ can also be constructed as the quotient of the pullback of $\Ad E$ and $H_k$ to $\mathcal{A}_\nu\times\Sigma$ and $\mathcal{A}_\nu\times\{x_k\}$. We claim that a fiberwise isometry $\mathbf{E}^\ad(g,n)\to\mathbf{E}^\ad(g,n-2)$ covering $\pi\times\id$ may be defined by the formula:
$$\obar{(t,A,x)}\mapsto\obar{(t,\wh A,x)}$$
for $A\in\mathcal{A}_\nu$ and $t\in\Ad E_x$. Note that the underlying compact surface $\Sigma$ and bundle $E$ for any number of punctures is the same, so we are free to repeat ``$x$'' and ``$t$'' on the right hand side. It is obvious that this is well-defined: if $g\in\mathcal{G}_\nu$ and $\wh g$ is the tranformation $g$ replaced by 1 on $U$, then $g\cdot(t,A,x)$ is sent to $\wh g\cdot(t,\wh A,x)$. This proves the first bundle isomorphism. The second is proved via an almost identical argument. The ambiguity between the line bundle and its dual arises from the effect of flips on the $\mathbf{V}_k$'s.
\end{proof}

As a corollary, we see that the classes $[\point]$ and $[a_j]$ restricted to $D^\pm_{k,l}$ are all pulled back from $\mathcal{R}_{g,n-2}$ in the expected way. Finally, we need to study how the projection $D^\pm_{k,l}\to\mathcal{R}_{g,n-2}$ behaves with respect to the class $[\omega]$. We have:

\begin{prop} \label{prop_symp_form_recursive}
Denote by $\omega_{g,n}$ the symplectic form on $\mathcal{R}_{g,n}$. Let $\pi:D^{\pm}_{k,l}\to\mathcal{R}_{g,n-2}$ be any of the natural 2-sphere fiber bundles arising from the map (\ref{recursive_map}), and let $\iota:D^{\pm}_{k,l}\hookrightarrow\mathcal{R}_{g,n}$ be the inclusion. Then we have $\iota^*(\omega_{g,n})=\pi^*(\omega_{g,n-2})$.
\end{prop}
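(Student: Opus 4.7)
By symmetry it suffices to treat $D^{-}_{n-1,n}$ with $\pi$ the map (\ref{recursive_map}): any other $D^{\pm}_{k,l}$ is carried to this one by a composition of a braid element and a flip (Lemmas \ref{mod_gn_action_on_V_k} and \ref{effect_flipping}), both of which preserve $\omega$ by Proposition \ref{prop_preserves_symp_form}. The strategy is to lift $\pi$ to a map between spaces of flat connections, and then to compare the two symplectic forms directly via the integral expression (\ref{def_symp_form}).

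The key geometric point is that on $D^{-}_{n-1,n}$ we have $T_{n-1}T_n = -T_{n-1}^2 = 1 \in \SU(2)$, so the $\SO(3)$ holonomy of any connection in the preimage around a loop enclosing both $x_{n-1}$ and $x_n$ is trivial. Fix a small open disk $V \subset \Sigma$ containing $x_{n-1}$ and $x_n$ and no other puncture, and let $\mathcal{A}_\nu \subset \mathcal{A}_\text{flat}$ denote the subspace of flat connections gauge-equivalent to ones agreeing with the product connection on $V$ in a fixed trivialization of $\Ad E|_V$. Triviality of the $\SO(3)$ holonomy around $\partial V$ implies that $\mathcal{A}_\nu$ surjects onto the preimage of $D^{-}_{n-1,n}$, and for an appropriate stabilizer $\mathcal{G}_\nu \subset \mathcal{G}$ one obtains $\mathcal{A}_\nu / \mathcal{G}_\nu \cong D^{-}_{n-1,n}$. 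For $A \in \mathcal{A}_\nu$, the restriction $A|_{\Sigma \setminus V}$ glues with the product connection to define a flat connection $\hat A$ on $\hat\Sigma^* := \Sigma \setminus \{x_1, \ldots, x_{n-2}\}$. This gives a smooth map $\mathcal{A}_\nu \to \mathcal{A}_\text{flat}(g, n-2)$ which descends to $\pi$ on gauge quotients.

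A tangent vector $a \in T_A \mathcal{A}_\nu$ is an $\so(\Ad E)$-valued $1$-form with $d_A a = 0$ that vanishes on $V$ in the chosen gauge, and its image under the differential of the above map is its extension by zero $\hat a$ to $\hat\Sigma^*$. For $a,b \in T_A \mathcal{A}_\nu$, one has
\begin{equation*}
\iota^*(\omega_{g,n})(a \wedge b) = \frac{1}{4\pi^2}\int_\Sigma \trace(a \wedge b) = \frac{1}{4\pi^2}\int_\Sigma \trace(\hat a \wedge \hat b) = \pi^*(\omega_{g,n-2})(a \wedge b),
\end{equation*}
using (\ref{def_symp_form}) at each end and noting that the two integrands agree off $V$ and both vanish on $V$.

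The main obstacle is the construction of $\mathcal{A}_\nu$ as a genuine smooth slice for $\mathcal{G}$ over $D^{-}_{n-1,n}$. Concretely, one must verify that the $\SO(3)$ gauge transformation trivializing a connection on $V$ can be chosen smoothly in $A$, that it can be arranged to be compatible with the fixed standard form (\ref{conn_space}) of connections near each of $x_{n-1}$ and $x_n$ (this is where the fact that the $\SO(3)$ holonomy around each individual puncture has order two enters concretely), and that every gauge class in $D^{-}_{n-1,n}$ admits such a representative. These are standard gauge-theoretic constructions, but the compatibility with the prescribed cylindrical model near the punctures requires careful interpolation between the trivialization on $V$ and the fixed trivializations near the $x_k$.
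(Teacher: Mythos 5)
Your overall strategy---reduce to $D^-_{n-1,n}$, use the triviality of the holonomy around a circle enclosing both $x_{n-1}$ and $x_n$ to split the surface, and compare the integral expressions (\ref{def_symp_form}) for the two forms---is the same as the paper's, but there is a genuine gap at the central step. A connection in $\mathcal{A}$ cannot be gauged to agree with the product connection on all of the disk $V$: by (\ref{conn_space}) and (\ref{adj_hol}) its holonomy around each \emph{individual} puncture $x_{n-1}$, $x_n$ is a fixed nontrivial element of $\SO(3)$, so its restriction to $V\setminus\{x_{n-1},x_n\}$ is never flat-trivial. The triviality of the holonomy around $\partial V$ only lets you put the connection in product form on an annular collar of $\partial V$. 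Consequently your $\mathcal{A}_\nu$ is empty as defined, and---more importantly---tangent vectors to the corrected slice vanish only on the collar, not on $V$, so your displayed chain of equalities breaks: $\int_\Sigma\trace(a\wedge b)$ picks up a contribution from the disk that is not pointwise zero. Note also that a tangent vector vanishing on all of $V$ cannot deform the holonomy $T_{n-1}$, so such vectors span only a codimension-two subspace of $T D^-_{n-1,n}$, missing exactly the $S^2$ fiber directions of $\pi$; your computation is silent on pairings involving those directions, which is precisely where the content of the proposition lies (on fiber directions $\pi^*\omega_{g,n-2}$ vanishes, and one must show $\iota^*\omega_{g,n}$ does too).

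The missing ingredient is the vanishing of the disk contribution. The paper's proof splits $\wt\omega$ as $\wt\omega_{\mathcal{U}}\oplus\wt\omega_{\mathcal{V}}$ according to the decomposition of $\Sigma$ along the collar and then shows that the disk piece $\wt\omega_{\mathcal{U}}$ descends to zero on the quotient: the moduli space of flat connections on the twice-punctured disk with the prescribed behavior at the punctures and trivial boundary holonomy is a single gauge orbit (the quotient of $C_\mathbf{i}$ by $\SU(2)$ is a point), so every tangent direction supported in the disk is of the form $d_Av$, and such directions annihilate $\wt\omega$ by the Stokes-theorem argument of \S\ref{subsec_mod_space}. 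Without this step, or an equivalent argument handling the fiber directions, the proposition is not proved; the technical issues you flag at the end (smooth dependence of the trivializing gauge transformation, compatibility with the cylindrical model) are real but secondary to this one.
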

\begin{proof}
We illustrate the Proposition in the case $D^-_{k,l}$, as the other case follows by invoking Lemma \ref{effect_flipping}. Without loss of generality, we may take $k,l$ = $n-1,n$, and $\pi$ is the map:
\begin{equation} \label{n1n_proj_map}
[\ubar S,T_1,\ldots,T_{n-2},T_{n-1},-T_{n-1}]\mapsto [\ubar S,T_1,\ldots,T_{n-2}].
\end{equation}
Any of the other natural choices for $\pi$ differ by postcomposing with the known symplectomorphisms of $\mathcal{R}_{g,n-2}$. We need to realize this map as an operation on flat connections in $\mathcal{A}$ in order to understand its interaction with the symplectic form. It will be convenient to work with $\SU(2)$-connections, so fix a basepoint $z\in\Sigma^*$ away from the punctures and recall our trivialization of $E$ away from the auxillary point $z_0$. Connections are now the same as $\su(2)$-valued 1-forms on $\Sigma^{**}=\Sigma^*\setminus\{z_0\}$, and these 1-forms are fixed near the punctures. Let $U\in\Sigma$ be a disk containing the neighborhoods of only the punctures $x_{n-1},x_n$ and such that $z\in\partial U$ and $z_0\nin U$, let $\widehat\Sigma^*$ denote the surface obtained by filling in these two punctures, and set $V=\Sigma\setminus U$. Let $\mathcal{A}^-_{n-1,n}$ denote the space of connections $A$ for whom $[A]\in D^-_{n-1,n}$. Every connection in $\mathcal{A}^-_{n-1,n}$ has $\SU(2)$ holonomy around $\partial U$ equal to the product of two antipodal elements of $\SU(2)$, which is just the identity. Since $\pi_1(\SU(2))=1$, for any $A\in\mathcal{A}^-_{n-1,n}$ we can find a gauge equivalent $A'$ such that the 1-form of $\su(A')$ vanishes on a fixed annular neighborhood $\nu$ of $\partial U$. Let $\mathcal{A}_\nu$ denote the subset of such (flat) connections; this subset is acted on by the subgroup $\mathcal{G}_\nu$ of gauge transformations which are constant on $\nu$ and the quotient is all of $D^-_{n-1,n}$.

Let $\mathcal{U},\mathcal{V}$ denote the spaces of flat connections on $U,V$ respectively, with the desired behavior near the punctures and whose 1-forms vanish near the boundary. Restriction of connections gives a homeomorphism $\wt\eta:\mathcal{A}_\nu\to\mathcal{U}\times\mathcal{V}$. If $\wt\omega_\mathcal{U}$ and $\wt\omega_\mathcal{V}$ denote the 2-forms on $\mathcal{U}$ and $\mathcal{V}$ coming from restricting the domain of integration in the definition (\ref{def_symp_form}) of $\wt\omega$ on $\mathcal{A}_\nu\subset\mathcal{A}$, then splitting the domain of integration implies that
\begin{equation} \label{symp_form_sum}
\wt\eta^*(\wt\omega_\mathcal{U}\oplus\wt\omega_\mathcal{V})=\wt\omega
\end{equation}
Letting $\mathcal{G}^1_\nu\subset\mathcal{G}_\nu$ denote the subgroup of $g$ with $g|_\nu=1$, the map $\wt\eta$, the 2-forms in equation (\ref{symp_form_sum}), and the relationship \ref{symp_form_sum} descend to
$$\wt\eta':\mathcal{A}_\nu/\mathcal{G}^1_\nu\to\mathcal{U}/\mathcal{G}^1_\nu\times\mathcal{V}/\mathcal{G}^1_\nu,$$
2-forms $\wt\omega'_\mathcal{V}$, $\wt\omega'_\mathcal{U}$, and $\wt\omega'$ and the equation $\wt\eta'^*(\wt\omega'_\mathcal{U}\oplus\wt\omega'_\mathcal{V})=\wt\omega'$. Now, given $A'\in\mathcal{V}$ and $A''\in\mathcal{U}$, there are corresponding connections $\wh A'$ and $\wh A''$ on $\wh\Sigma^*$ and $\wh U$, the sphere gotten by capping off $U$ with another disk, obtained by extending via the trivial connection. This identifies (using the ideas of Lemma \ref{partial_quotient}) the factor $\mathcal{U}/\mathcal{G}^1_\nu$ with the space
$$\wt{\mathcal{R}}_U=\{T_{n-1},T_n\in C_\mathbf{i}:\;T_{n-1}T_n=1\}\cong C_\mathbf{i}$$
and the factor $\mathcal{V}/\mathcal{G}^1_\nu$ with $\wt{\mathcal{R}}_{g,n-2}$. The space $\mathcal{A}_\nu/\mathcal{G}^1_\nu$ is clearly just the preimage $\wt D^-_{n-1,n}$ in $\wt{\mathcal{R}}_{g,n}$. Implicitly using all these identifications, it is straightforward to check that $\wt\omega'_\mathcal{V}$ corresponds to the closed 2-form $\wt\omega'_{g,n-2}$ on $\wt{\mathcal{R}}_{g,n-2}$ which is the pullback of $\omega_{g,n-2}$, and $\wt\omega'$ is the pullback of $\omega_{g,n}$ on $\mathcal{R}_{g,n}$. We have achieved now the isomorphism of manifolds with 2-forms:
$$\wt\eta':(\wt D^-_{n-1,n},\wt\omega'_{g,n})\cong (C_\mathbf{i},\wt\omega'_\mathcal{U})\times(\wt{\mathcal{R}}_{g,n-2},\wt\omega'_{g,n-2})$$
The quotient $\mathcal{G}_\nu/\mathcal{G}^1_\nu$ is isomorphic to $\SU(2)$ and the residual action of this group on $\wt{\mathcal{R}}_{g,n-2}$ is projectively free. The quotient on both sides (where the action on the right hand side is the simultaneous one) gives the $C_\mathbf{i}$-fiber bundle $\pi$. We are done then if we can show that $\wt\omega'_\mathcal{U}$ is actually 0. The key point is that all the connections in $\mathcal{U}$ are actually gauge equivalent; the quotient of $C_\mathbf{i}$ by $\SU(2)$ is a single point. Any two tangent vectors in $T_{A''}\mathcal{U}$ then differ by $(dg)g^{-1}$ for some gauge transformation $g$, which is in the annihilator of the linear form $\wt\omega'_\mathcal{U}$ (see the discussion on the symplectic structure in \S\ref{subsec_mod_space}). Hence, $\wt\omega'_\mathcal{U}$ must vanish.
\end{proof}

\begin{cor} \label{cor_symp_form_recursive}
The restriction of the map $\pi$ in \ref{prop_symp_form_recursive} to the intersection $D^{\eps_1}_{j,k}\cap D^{\eps_2}_{k,l}$ is a symplectomorphism.
\end{cor}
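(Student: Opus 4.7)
The plan is to deduce the corollary almost immediately from Proposition \ref{prop_symp_form_recursive} by functoriality of pullback, relying on the fact, already observed in the discussion surrounding formula (\ref{recursive_map_II}), that the restriction of $\pi$ to $D^{\eps_1}_{j,k}\cap D^{\eps_2}_{k,l}$ is a diffeomorphism onto $\mathcal{R}_{g,n-2}$. In that discussion it was shown that after composing with a braid element the intersection is carried to $D^{+}_{n-2,n-1}\cap D^{+}_{n-1,n}$ and then the restricted formula (\ref{recursive_map_II}) is bijective, so no further work is needed at the level of underlying manifolds.

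Concretely, I would set $D = D^{\eps_1}_{j,k}\cap D^{\eps_2}_{k,l}$ and factor the inclusion $\iota_D : D \hookrightarrow \mathcal{R}_{g,n}$ as $\iota\circ\iota'$, where $\iota' : D \hookrightarrow D^{\eps_2}_{k,l}$ and $\iota : D^{\eps_2}_{k,l}\hookrightarrow \mathcal{R}_{g,n}$. Proposition \ref{prop_symp_form_recursive} gives $\iota^*\omega_{g,n} = \pi^*\omega_{g,n-2}$, and pulling this identity back along $\iota'$ yields
$$\iota_D^*\omega_{g,n} \;=\; (\iota')^*\iota^*\omega_{g,n} \;=\; (\iota')^*\pi^*\omega_{g,n-2} \;=\; (\pi|_D)^*\omega_{g,n-2}.$$
Since $\pi|_D$ is a diffeomorphism onto $\mathcal{R}_{g,n-2}$, this identity is precisely the statement that $\pi|_D$ is a symplectomorphism; as a by-product, $D$ inherits the structure of a symplectic submanifold of $\mathcal{R}_{g,n}$.

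One mild point worth noting is that there are several natural 2-sphere fiber bundle maps $D^{\eps_2}_{k,l}\to\mathcal{R}_{g,n-2}$ arising from different choices of braid in the construction following (\ref{recursive_map}), but any two differ by post-composition with a mapping class or a flip on $\mathcal{R}_{g,n-2}$, both of which are symplectomorphisms by Proposition \ref{prop_preserves_symp_form}; thus the statement of the corollary is independent of which $\pi$ is used. I do not anticipate a serious obstacle, since the substantive analytic work (the splitting of the integral defining $\wt\omega$ and the vanishing of $\wt\omega'_\mathcal{U}$) has already been carried out in the proof of Proposition \ref{prop_symp_form_recursive}, and the corollary is essentially a bookkeeping consequence.
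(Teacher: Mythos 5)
Your proposal is correct and follows essentially the same route as the paper: the paper's own proof simply notes that the restriction is a diffeomorphism (from the discussion around formula (\ref{recursive_map_II})) whose inverse is a section of the 2-sphere bundle $\pi$, and then invokes Proposition \ref{prop_symp_form_recursive}, which is exactly your pullback computation spelled out. Your additional remarks on nondegeneracy and on independence of the choice of $\pi$ are accurate but not needed beyond what the paper records.
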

\begin{proof}
This restriction is a diffeomorphism by the discussion preceding Proposition \ref{prop_values_r_s}, and its inverse gives a section of the 2-sphere bundle $\pi$.
\end{proof}

Up until now, we have been entirely focused on the line bundles $V_k$ and degree two classes $c_1(V_k)$. We now consider the degree three classes $[a_j]$ for $1\leq j\leq 2g$. Their role in the cohomology ring has been well understood for over twenty years, as we now review. As shown in \cite{thaddeus_conformal}, it is convenient to introduce the class $\gamma_j=\tfrac{1}{16}[a_{2j-1}][a_{2j}]$ (our normalization here will be justified later on) for $1\leq j\leq g$. For each $j$, there is a natural embedding $\iota_j:\mathcal{R}_{g-1,n}\into\mathcal{R}_{g,n}$ given by
$$[\ubar S,\ubar T]\mapsto[S_1,\ldots,S_{2j-2},1,1,S_{2j-1},\ldots,S_{2g-2},\ubar T]$$
whose image is exactly the collection of representations $[\ubar S,\ubar T]$ for which $S_{2j-1}=S_{2j}=1$. In \cite{thaddeus_conformal}, it is proved that the submanifold $\iota_j(\mathcal{R}_{g-1,n})$ is Poincar\'e dual to $\gamma_j$ (at least in the case $n=0$, and the proof adapts readily to our situation). It is straightforward to check using the methods of the current paper that $\iota_j$ respects the symplectic forms, and the universal bundle pair over $\mathcal{R}_{g-1,n}$ is pulled back via $\iota_j$. This fact shows that the cohomology ring also has an inductive structure in the genus $g$. Putting everything together, we have:

\begin{prop} \label{prop_cut_down_relation}
The product class $\gamma_{j_1}\cdots\gamma_{j_r}c_1(V_{k_1})\cdots c_1(V_{k_s})$ is a constant multiple of the Poincar\'e dual to a collection of $2^s$ submanifolds of $\mathcal{R}_{g,n}$ each symplectomorphic to a copy of $\mathcal{R}_{g-r,n-2s}$ by a map under which the classes $[\point]$, $[a_j]$, and $c_1(V_k)$ (where $j\neq 2j_i,2j_i-1$ and $k\neq k_i$ for any $i$) are all pulled back accordingly.
\end{prop}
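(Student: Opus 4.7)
The strategy is a two-step reduction, first collapsing the genus factors $\gamma_{j_i}$ and then the Chern class factors $c_1(V_{k_i})$. For the first step, the paragraph preceding the statement records that each $\gamma_j=\tfrac{1}{16}[a_{2j-1}][a_{2j}]$ is, up to a constant, Poincar\'e dual to the embedding $\iota_j:\mathcal{R}_{g-1,n}\into\mathcal{R}_{g,n}$ sending $S_{2j-1}=S_{2j}=1$. These embeddings for distinct $j$ commute and their images intersect transversally (they cut out conditions on disjoint coordinates of $\wt{\mathcal{R}}_{g,n}$), so iterating realizes $\gamma_{j_1}\cdots\gamma_{j_r}$ as a constant multiple of the Poincar\'e dual of an embedded copy of $\mathcal{R}_{g-r,n}$. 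Moreover, the restriction of $\omega_{g,n}$ to this submanifold agrees with $\omega_{g-r,n}$, and the universal pair $(\mathbf{E}^\ad,\{\mathbf{V}_k\})$ pulls back to the corresponding universal pair on $\mathcal{R}_{g-r,n}$; these assertions can be verified by arguments parallel to those of Propositions \ref{prop_symp_form_recursive} and \ref{univ_bundle_recursive}, realizing the $S_{2j-1}=S_{2j}=1$ locus as flat connections that are trivial in a neighborhood of the chosen pair of loops.

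For the second step, apply Corollary \ref{V_k_poincare_dual} and Proposition \ref{prop_values_r_s}: for each $k_i$ pick an auxiliary index $l_i$ so that the $k_i$'s and $l_i$'s are mutually distinct (possible whenever $n\geq 2s+1$). Then $c_1(V_{k_i})$ is Poincar\'e dual to $\pm[D^+_{k_i,l_i}]\pm[D^-_{k_i,l_i}]$. Expanding the product yields a sum of $2^s$ terms, each (up to sign) the Poincar\'e dual of a transverse intersection $\bigcap_i D^{\eps_i}_{k_i,l_i}$. By Corollary \ref{cor_symp_form_recursive} and Proposition \ref{univ_bundle_recursive}, each such intersection maps symplectomorphically to $\mathcal{R}_{g,n-2s}$ (by iterating the 2-sphere bundle projection (\ref{recursive_map}) and passing to the section of this bundle supplied by (\ref{recursive_map_II}) on each successive intersection), with $[\point]$, $[a_j]$, and the remaining $c_1(V_k)$ for $k\notin\{k_i,l_i\}$ pulled back accordingly. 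Combining with the first step after observing that the two reductions commute (the loci $S_{2j-1}=S_{2j}=1$ involve only the $\ubar S$ coordinates, while the $D^{\eps_i}_{k_i,l_i}$'s involve only $\ubar T$) gives the proposition.

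The main technical obstacle is verifying that each stage of intersection is transverse and connected; this is sketched in the proof of Proposition \ref{prop_values_r_s} by an induction on $n$ carried out upstairs in $\wt{\mathcal{R}}_{g,n}$, where explicit tangent complements can be written down at a generic representation. A secondary bookkeeping issue is tracking orientations of the $D^{\eps_i}_{k_i,l_i}$ together with the sign ambiguity in Proposition \ref{prop_values_r_s}, so that the sum really does produce all $2^s$ copies of $\mathcal{R}_{g-r,n-2s}$ with a common nonzero coefficient rather than cancelling in pairs; this ultimately follows from the invariance of $c_1(V_k)^2$ under the flip symmetries $M_J$ of Lemma \ref{effect_flipping} and Proposition \ref{prop_symmetries_univ_bundle}, which forces the coefficients of $[D^+_{k_i,l_i}]$ and $[D^-_{k_i,l_i}]$ in $c_1(V_{k_i})$ to have equal absolute value and combine constructively once orientations are fixed consistently across all $2^s$ strata.
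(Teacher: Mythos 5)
Your overall route is the same as the paper's: reduce to the base cases $(r,s)=(1,0)$ and $(0,1)$, handle the $\gamma_j$'s via the Thaddeus embeddings $\iota_j$, handle the $c_1(V_{k_i})$'s via Corollary \ref{V_k_poincare_dual} and Proposition \ref{prop_values_r_s}, and then invoke Propositions \ref{univ_bundle_recursive} and \ref{prop_symp_form_recursive} for the pullback statements; the paper's own proof is exactly this synthesis, stated in three sentences. The genus step of your argument is fine.

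The gap is in the second step, and it is a dimension count. If you choose the auxiliary indices $l_i$ so that all $2s$ indices $k_i,l_i$ are mutually distinct, then each stratum $\bigcap_i D^{\eps_i}_{k_i,l_i}$ has codimension $2s$, hence dimension $6g-6+2n-2s$, whereas $\dim\mathcal{R}_{g,n-2s}=6g-6+2n-4s$. So these strata cannot be symplectomorphic to $\mathcal{R}_{g,n-2s}$ for $s\geq 1$: what Proposition \ref{univ_bundle_recursive} and the map (\ref{recursive_map}) actually give is that each stratum is an iterated $2$-sphere bundle over $\mathcal{R}_{g,n-2s}$ along which $\iota^*\omega_{g,n}=\pi^*\omega_{g,n-2s}$ and the universal pair is pulled back. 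Your appeal to ``passing to the section supplied by (\ref{recursive_map_II})'' does not repair this: that section exists only when two consecutive divisors share a common index (as in $D^+_{n-2,n-1}\cap D^+_{n-1,n}$), which your choice of mutually distinct pairs explicitly rules out, and in any case replacing a sphere bundle by a section changes the homology class by codimension $2$, so the result would no longer be Poincar\'e dual to the given product. To be fair, this looseness is already present in the statement of the proposition (note that Lemma \ref{lem_point_class_pd}, where a genuine copy of $\mathcal{R}_{g,n-2}$ appears, uses \emph{two} divisors sharing an index to represent the degree-$4$ class $[\point]$, not one divisor per degree-$2$ class). The correct conclusion, and the only one needed for Corollary \ref{cor_smaller_relations}, is the pullback property along the bundle projection $\pi$: a relation $f([\omega],[\point])$ in $\mathcal{R}_{g,n-2s}$ pulls back to zero on each stratum, which is what forces the product with $f$ to vanish. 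Your final paragraph's worry about constructive versus destructive combination of the $2^s$ terms is also not resolved by the flip-invariance of $c_1(V_k)^2$, which constrains only the square of a single class rather than a product of distinct $c_1(V_{k_i})$'s; but since the proposition only asserts that the product is the Poincar\'e dual of the signed sum of the $2^s$ strata, no such constructive combination needs to be proved.
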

\begin{proof}
This is all a straightforward synthesis of facts proved up to this point. The result follows by induction after proving it in the case $r,s=0,1$ or $1,0$. The case when the class is $c_1(V_k)$ for some $k$ follows from Propositions \ref{univ_bundle_recursive} and \ref{prop_symp_form_recursive}. The case of the class $\gamma_j$ follows from \cite{thaddeus_conformal} and the discussion preceding the proposition.
\end{proof}

\begin{cor} \label{cor_smaller_relations}
Suppose $f(a,b)$ is a polynomial such that $f([\omega],[\point])\in H^*(\mathcal{R}_{g,n};\QQ)$ equals the zero class. Then the polynomial
$$\gamma_{j_1}\cdots\gamma_{j_r}c_1(V_{k_1})\cdots c_1(V_{k_s})\cdot f([\omega],[\point])=0$$
equals the zero class in the ring $H^*(\mathcal{R}_{g+r,n+2s};\QQ)$.
\end{cor}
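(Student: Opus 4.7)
The plan is to deduce the corollary directly from Proposition \ref{prop_cut_down_relation} together with the standard push--pull identity for Poincar\'e duals, so that the hypothesized vanishing on $\mathcal{R}_{g,n}$ pulls back to a vanishing of the restriction on each of the $2^s$ auxiliary submanifolds.

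First, I would invoke Proposition \ref{prop_cut_down_relation} to write
$$\gamma_{j_1}\cdots\gamma_{j_r}\,c_1(V_{k_1})\cdots c_1(V_{k_s}) \;=\; C\cdot\sum_{i=1}^{2^s}\mathrm{PD}([W_i])$$
in $H^*(\mathcal{R}_{g+r,n+2s};\QQ)$, where $C$ is a nonzero rational constant and each $W_i\subset\mathcal{R}_{g+r,n+2s}$ comes equipped with a symplectomorphism $\phi_i\colon W_i\xrightarrow{\cong}\mathcal{R}_{g,n}$ such that, under the inclusion $\iota_i\colon W_i\hookrightarrow\mathcal{R}_{g+r,n+2s}$, one has $\iota_i^*[\omega]=\phi_i^*[\omega]$ and $\iota_i^*[\point]=\phi_i^*[\point]$. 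These two pullback identities are precisely the content of Proposition \ref{prop_symp_form_recursive} (applied inductively across the $s$ pairs producing the $c_1(V_{k_j})$ factors) and Proposition \ref{univ_bundle_recursive} (applied inductively across the $r$ genus reductions producing the $\gamma_{j_i}$ factors).

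Next, I would multiply both sides by $f([\omega],[\point])$ and apply the push--pull formula for the smooth closed embeddings $\iota_i$:
$$\mathrm{PD}([W_i])\smile f([\omega],[\point]) \;=\; (\iota_i)_*\bigl(\iota_i^*f([\omega],[\point])\bigr).$$
Because $f$ is a polynomial and pullback is a ring homomorphism,
$$\iota_i^*f([\omega],[\point]) \;=\; f\bigl(\iota_i^*[\omega],\iota_i^*[\point]\bigr) \;=\; f\bigl(\phi_i^*[\omega],\phi_i^*[\point]\bigr) \;=\; \phi_i^*\,f([\omega],[\point]),$$
and by hypothesis $f([\omega],[\point])=0$ in $H^*(\mathcal{R}_{g,n};\QQ)$. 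Hence each term $\iota_i^*f([\omega],[\point])$ vanishes on $W_i$, so its pushforward to $\mathcal{R}_{g+r,n+2s}$ vanishes, and summing over $i$ gives
$$C\cdot\sum_{i=1}^{2^s}\mathrm{PD}([W_i])\smile f([\omega],[\point]) \;=\; 0,$$
which is exactly the stated relation after dividing by $C$.

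The only real subtlety, and hence the main point to check carefully when writing the proof in full, is that one truly may identify $\iota_i^*[\omega]$ and $\iota_i^*[\point]$ with $\phi_i^*$ of the same-named classes on the smaller moduli space $\mathcal{R}_{g,n}$. The degree-four class $[\point]=p_1(\mathbf{E}^{\ad})/[\point]$ pulls back correctly because $\mathbf{E}^{\ad}(g+r,n+2s)$ restricts to the pullback of $\mathbf{E}^{\ad}(g,n)$ on each $W_i\times\Sigma$ by Proposition \ref{univ_bundle_recursive} (iterated), and the symplectic form pulls back correctly by Proposition \ref{prop_symp_form_recursive} combined with the analogous behavior for the $\gamma_j$ submanifolds noted from \cite{thaddeus_conformal}. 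Once these compatibilities are in place, the argument above is purely formal.
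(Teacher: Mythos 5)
Your proposal is correct and is precisely the argument the paper intends: the corollary is stated without proof as an immediate consequence of Proposition \ref{prop_cut_down_relation}, and your push--pull deduction (writing the product class as a multiple of $\sum_i \mathrm{PD}([W_i])$ and using $\mathrm{PD}([W_i])\smile x=(\iota_i)_*(\iota_i^*x)$ together with the vanishing of $\iota_i^*f([\omega],[\point])=\phi_i^*f([\omega],[\point])$) is exactly the intended, implicit proof. The only cosmetic point is that you need not worry about dividing by $C$: if the constant were zero the product class itself vanishes and the conclusion is trivial.
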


\subsection{The Four Dimensional Class of a Point}

We would like to record some properties of the degree four class $[\point]$ and the degree three classes $[a_j]$. We first prove an easy equation relating $[\point]$ to the degree two classes $c_1(V_k)$.

\begin{lemma} \label{lem_point_class}
For any $k$, we have $[\point]=-c_1(V_k)^2$
\end{lemma}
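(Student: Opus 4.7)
The idea is to restrict $\mathbf{E}^\ad$ to the slice $\mathcal{R}_{g,n}\times\{x_k\}$ over the $k$-th puncture, where the structure group of $\Ad E$ is reduced, forcing a decomposition that makes the Pontryagin class computable in terms of $c_1(\mathbf{V}_k)$ alone.

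First, since $[\point] = p_1(\mathbf{E}^\ad)/[\point]$ depends only on the homology class of the chosen point in $H_0(\Sigma;\QQ)$, we are free to evaluate at a puncture $x_k$. The key step is to establish the splitting
\[
\mathbf{E}^\ad\big|_{\mathcal{R}_{g,n}\times\{x_k\}} \;\cong\; \mathbf{V}_k \oplus \mathbf{L}_k,
\]
where $\mathbf{V}_k$ is the complex line bundle of Lemma \ref{line_bundles_over_R} (viewed as a real rank $2$ oriented bundle) and $\mathbf{L}_k$ is a trivial real line bundle. Fiberwise over $x_k$ this is just the orthogonal decomposition $(\Ad E)_{x_k}=H_k\oplus\ell_k$ already introduced in \S\ref{subsec_mod_space}. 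To see the splitting descends globally and that $\mathbf{L}_k$ is trivial, I would use the gauge-group constraint (\ref{gauge_G}): every $g\in\mathcal{G}$ is of the diagonal form $\diag(w,w^{-1})$ near $x_k$, and a direct calculation (exactly of the kind carried out in Lemma \ref{line_bundles_over_R}) shows that such a $g$ preserves the decomposition, acting on the natural complex coordinate of $H_k$ by multiplication by $w^2$ and acting \emph{trivially} on $\ell_k$, since diagonal skew-Hermitian matrices commute. Hence $\mathbf{L}_k$ is associated to the principal $U(1)$-bundle $V_k$ via the trivial character, and is therefore a trivial line bundle.

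Finally, multiplicativity of the total Pontryagin class, together with the vanishing of $p_1$ on a trivial bundle, give $[\point]=p_1(\mathbf{V}_k)$. The identity $p_1(\mathbf{V}_k)=-c_1(V_k)^2$ then follows from the standard complexification formula: $\mathbf{V}_k\otimes_{\RR}\CC\cong V_k\oplus\overline{V_k}$, so that the total Chern class is $(1+c_1(V_k))(1-c_1(V_k))=1-c_1(V_k)^2$, from which $p_1(\mathbf{V}_k)$ is read off using the paper's sign convention. The only nontrivial verification is the global triviality of $\mathbf{L}_k$, i.e. that the residual gauge action near $x_k$ is diagonal and hence preserves $\ell_k$ pointwise; with that in hand the remainder reduces to a routine characteristic-class calculation and no serious obstacle remains.
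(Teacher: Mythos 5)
Your proof is correct and takes essentially the same route as the paper's: restrict $\mathbf{E}^\ad$ to the slice $\mathcal{R}_{g,n}\times\{x_k\}$, split off the trivial real line coming from $\ell_k$ (the paper simply asserts $\mathbf{E}^\ad_k\cong\ubar\RR\oplus\mathbf{V}_k$), and read off $p_1$ from the remaining complex line summand. You supply more detail than the paper does on why the $\ell_k$-summand is globally trivial and on the final characteristic-class identity, but the underlying argument is the same.
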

\begin{proof}
We will prove this by studying a corresponding isomorphism of universal bundles on $\mathcal{R}_{g,n}$. By definition $[\point]=p_1(\mathbf{E}^\ad|_{\mathcal{R}_{g,n}\times\{\point\}})$ where $\point\in\Sigma$. Since the isomorphism type of $\mathbf{E}^\ad|_{\mathcal{R}_{g,n}\times\{\point\}}$ is independent of the choice of $\point$ as $\mathcal{R}_{g,n}$ is connected, we are free to choose $\point=x_k$ for any $k$. Write $\mathbf{E}^\ad_k\to\mathcal{R}_{g,n}$ for the restriction of $\mathbf{E}^\ad$ in this case. By definition of $\mathbf{V}_k$, it is clear that $\mathbf{E}^\ad_k$ is isomorphic as a real vector bundle to $\ubar \RR\oplus\mathbf{V}_k$. Hence, $p_1(\mathbf{E}^\ad_k)=-c_1(\mathbf{V}_k)^2$.
\end{proof}
\begin{remark}
One can prove Lemma \ref{lem_point_class} directly by studying the associated vector bundle constructions of $W$ and $V_k$ from $\PU(2)$ and $\U(1)$ bundles. Let $L_k$ again denote the line bundle $V_k\times_{\U(1)}\CC$ and $W$ the associated bundle $\wt{\mathcal{R}}_{g,n}\times_{\SO(3)}\RR^3$. Then for $\obar{(\rho,w)}$ in $V_k$, a map $L_k\oplus\ubar\RR\to W$ is given by
$$\left(\obar{(\rho,w)},r\right)\mapsto \left(\rho,\left(\begin{matrix}r&w \\ -\obar w&-r\end{matrix}\right)\right)$$
\end{remark}


\begin{lemma} \label{lem_point_class_pd}
The class $[\point]$ is Poincar\'e dual to a union of four, disjoint, codimension four submanifolds $D_1$, $D_2$, $D_3$, and $D_4$ each with a symplectomorphism $\tau_\kappa$ to $\mathcal{R}_{g,n-2}$ for $\kappa=1,2,3,4$. Letting $\iota_\kappa:D_\kappa\to\mathcal{R}_{g,n}$ denote the inclusions, these symplectomorphisms also satisfy $\tau_\kappa^*[\point]=\iota_\kappa^*[\point]$ and $\tau_\kappa^*[a_j]=\iota_\kappa^*[a_j]$ for $1\leq j\leq 2g$.
\end{lemma}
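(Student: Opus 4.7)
The plan is to combine Lemma \ref{lem_point_class}, which identifies $[\point]$ with $-c_1(V_k)^2$ for any index $k$, with the explicit description of $\PD(c_1(V_k))$ coming from Corollary \ref{V_k_poincare_dual} and Proposition \ref{prop_values_r_s}. First I would fix $k$ and choose two further indices $l,m$ distinct from $k$ and from each other (possible since $n\geq 3$). After suitably orienting each $D^{\pm}_{k,l}$ and $D^{\pm}_{k,m}$, Corollary \ref{V_k_poincare_dual} together with Proposition \ref{prop_values_r_s} gives
$$\PD(c_1(V_k)) = [D^+_{k,l}] + [D^-_{k,l}] = [D^+_{k,m}] + [D^-_{k,m}].$$
Taking the intersection product of these two expressions, and using the general transversality assertion preceding Proposition \ref{univ_bundle_recursive}, yields
$$\PD([\point]) = -\PD(c_1(V_k)^2) = -\sum_{\eta_1,\eta_2 \in \{+,-\}} [D^{\eta_1}_{k,l} \cap D^{\eta_2}_{k,m}].$$

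The four candidate submanifolds $D_\kappa := D^{\eta_1}_{k,l} \cap D^{\eta_2}_{k,m}$ are pairwise disjoint, since a point in two of them would force, say, both $T_k = T_l$ and $T_k = -T_l$ simultaneously, which is impossible in $\SU(2)$; each has codimension four. By the discussion following equation (\ref{recursive_map_II}), each $D_\kappa$ carries a natural diffeomorphism $\tau_\kappa$ with $\mathcal{R}_{g,n-2}$, and Corollary \ref{cor_symp_form_recursive} upgrades $\tau_\kappa$ to a symplectomorphism. The compatibilities $\tau_\kappa^*[\point] = \iota_\kappa^*[\point]$ and $\tau_\kappa^*[a_j] = \iota_\kappa^*[a_j]$ for $1 \leq j \leq 2g$ follow by restricting the universal-bundle isomorphism from Proposition \ref{univ_bundle_recursive},
$$\mathbf{E}^{\ad}(g,n)|_{D^{\eta_1}_{k,l} \times \Sigma} \cong (\pi \times \id)^* \mathbf{E}^{\ad}(g,n-2),$$
further down to $D_\kappa \times \Sigma$ (on which $\pi$ restricts to $\tau_\kappa$), and then applying naturality of the slant product to the classes $[\point]$ and $[a_j]$ on the surface factor.

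The one subtlety in the argument is purely sign-bookkeeping: the intersection expansion above writes $[\point]$ as a \emph{negative} sum over the $[D_\kappa]$'s rather than as a clean positive union as demanded by the lemma. Because Corollary \ref{V_k_poincare_dual} and Proposition \ref{prop_values_r_s} leave the orientations of the $D^{\pm}_{k,\bullet}$, and hence of each $D_\kappa$, undetermined, I would simply reverse the orientation on each $D_\kappa$ to absorb the factor of $-1$. After this reorientation, $[\point]$ is Poincar\'e dual to the oriented disjoint union $D_1 \sqcup D_2 \sqcup D_3 \sqcup D_4$, as required.
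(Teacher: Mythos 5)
Your proposal is correct and follows essentially the same route as the paper: identify $[\point]=-c_1(V_k)^2$ via Lemma \ref{lem_point_class}, expand the square as an intersection of the Poincar\'e duals $[D^+]+[D^-]$ from Proposition \ref{prop_values_r_s}, recognize the four resulting intersections as copies of $\mathcal{R}_{g,n-2}$ symplectomorphically via Corollary \ref{cor_symp_form_recursive}, and pull back $[\point]$ and $[a_j]$ through the universal-bundle isomorphism of Proposition \ref{univ_bundle_recursive}. Your added care about disjointness and the orientation/sign bookkeeping is a harmless refinement of what the paper leaves implicit.
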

\begin{proof}
By Lemma \ref{lem_point_class}, we have $[\point]=-c_1(V_k)^2$ for all $k$. By Proposition \ref{prop_values_r_s}, we see that
$$\PD([\point])=([D^+_{12}]+[D^-_{12}])\cap([D^+_{23}]+[D^-_{23}])$$
for some choice of orientations for these submanifolds. Each of the four terms $D^\pm_{12}\cap D^\pm_{23}$ in the expansion is symplectomorphic to $\mathcal{R}_{g,n-2}$ by \ref{cor_symp_form_recursive}, through a map under which the universal bundle $\mathbf{E}^\ad$ pulls back to the restriction. The classes $[\point]$ and $[a_k]$ therefore also pull back, being defined through the universal bundle.
\end{proof}

\begin{cor} \label{cor_smaller_relations_pt}
Suppose $f$ is a polynomial in the $[a_j]$'s, $c_1(V_k)$'s, $[\omega]$, and $[\point]$ which is a relation in $H^*(\mathcal{R}_{g,n};\QQ)$. Then the polynomial $f\times[\point]^s$ is a relation in $H^*(\mathcal{R}_{g,n+2s};\QQ)$
\end{cor}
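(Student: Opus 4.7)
The plan is to induct on $s$. The base case $s=0$ is vacuous, so I focus on the inductive step: assuming the statement holds for $s-1$, it suffices to show that if $h \in H^*(\mathcal{R}_{g,m};\QQ)$ is a relation then $h\cdot[\point] \in H^*(\mathcal{R}_{g,m+2};\QQ)$ is also a relation (applied with $m = n + 2(s-1)$ and $h = f\cdot[\point]^{s-1}$).

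For the inductive step, the main tool is Lemma \ref{lem_point_class_pd} applied to $\mathcal{R}_{g,m+2}$: we may write $[\point] = \sum_{\kappa=1}^{4}\PD[D_\kappa]$, where each $D_\kappa \subset \mathcal{R}_{g,m+2}$ admits a symplectomorphism $\tau_\kappa$ to $\mathcal{R}_{g,m}$. By the projection formula,
\[
h \cdot [\point] \;=\; \sum_{\kappa=1}^{4} (\iota_\kappa)_{*}\bigl(\iota_\kappa^{*}\, h\bigr),
\]
so it is enough to show $\iota_\kappa^{*} h = 0$ in $H^{*}(D_\kappa;\QQ) \cong H^{*}(\mathcal{R}_{g,m};\QQ)$ for each $\kappa$.

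The next step is to identify $\iota_\kappa^{*} h$ with the $\tau_\kappa$-pullback of a polynomial $\tilde h_\kappa$ in the same generators of $H^{*}(\mathcal{R}_{g,m};\QQ)$. The classes $[\point]$ and $[a_j]$ transform as expected by Lemma \ref{lem_point_class_pd}; the symplectic class $[\omega]$ transforms correctly by Corollary \ref{cor_symp_form_recursive}, since each $D_\kappa$ is an intersection of two submanifolds of the form $D^{\pm}_{k,l}$; and the line-bundle classes $c_1(V_k)$ transform by Proposition \ref{univ_bundle_recursive}, which yields $\iota_\kappa^{*} c_1(V_k) = \tau_\kappa^{*}\bigl(\varepsilon_k^{\kappa}\, c_1(V_{\rho_\kappa(k)})\bigr)$ for signs $\varepsilon_k^{\kappa}\in\{\pm 1\}$ and an index map $\rho_\kappa$ determined by $\kappa$. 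Thus $\tilde h_\kappa$ is obtained from $h$ by permuting the $c_1(V_k)$ indices and sign-flipping some of them.

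The remaining task, which I expect to be the main obstacle, is to verify that each $\tilde h_\kappa$ is itself a relation in $H^{*}(\mathcal{R}_{g,m};\QQ)$. The index permutation $\rho_\kappa$ can be realized by the braid group action $M_\phi$ of Lemma \ref{mod_gn_action_on_V_k}, and the sign flips by the maps $M_J$ of Lemma \ref{effect_flipping}, both of which are symplectomorphisms of $\mathcal{R}_{g,m}$ (Proposition \ref{prop_preserves_symp_form}) acting on cohomology compatibly with the universal bundle data (Proposition \ref{prop_symmetries_univ_bundle}), and hence preserving the ideal of relations. The subtlety is that flips require $|J|$ to be even; I would verify this parity condition on $\{\varepsilon_k^{\kappa}\}$ by tracking the sign on the merged coordinate in formula (\ref{recursive_map_II}) together with the duality ambiguity in Proposition \ref{univ_bundle_recursive} (the $D_\kappa$ arise from $D^{\pm}_{i,j}\cap D^{\pm}_{j,k}$, and toggling between the $\pm$ choices corresponds to flips of two-element subsets). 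Once the even-parity condition is confirmed, $\tilde h_\kappa = M^{*} h$ for some symplectomorphism $M$ of $\mathcal{R}_{g,m}$, and therefore $\tilde h_\kappa$ vanishes whenever $h$ does, completing the induction.
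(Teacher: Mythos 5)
Your overall architecture (induction on $s$, the decomposition of $[\point]$ from Lemma \ref{lem_point_class_pd}, the projection formula, and the reduction to showing $\iota_\kappa^*$ of the class vanishes) is exactly the intended route; the paper in fact states this corollary with no proof at all, so the real question is whether your plan closes the one step you flag as ``to verify,'' and I do not think it does. Track the signs concretely: representing $[\point]=-c_1(V_{n+1})^2$ by $\bigl([D^+_{n,n+1}]+[D^-_{n,n+1}]\bigr)\cap\bigl([D^+_{n+1,n+2}]+[D^-_{n+1,n+2}]\bigr)$, a point of $D^{\eps_1}_{n,n+1}\cap D^{\eps_2}_{n+1,n+2}$ is $[\ubar S,T_1,\ldots,T_n,\eps_1T_n,\eps_1\eps_2T_n]$, the product of the last three coordinates is $-\eps_2T_n$, and so the identification with $\mathcal{R}_{g,n}$ sends the merged coordinate to $-\eps_2 T_n$. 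Hence on the two components with $\eps_2=+$ exactly one of the ``old'' classes, $c_1(V_n)$, restricts to $-\tau_\kappa^*c_1(V_n)$, while all other $c_1(V_k)$, $k<n$, restrict on the nose. That is an \emph{odd} sign pattern on $\{1,\ldots,n\}$, so your proposed fix via the flips $M_J$ cannot apply: $M_J$ requires $\abs{J}$ even precisely because a single sign change does not preserve the defining relation $\prod[S,S]\prod T_k=-1$, and the braid action only permutes indices. The group generated by permutations and even flips never produces the involution $c_1(V_n)\mapsto -c_1(V_n)$ fixing the rest, so ``$\tilde h_\kappa=M^*h$ for some symplectomorphism $M$'' is false for those two components.

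What is actually needed at that point is the statement that the single substitution $c_1(V_n)\mapsto -c_1(V_n)$ preserves the ideal of relations of $H^*(\mathcal{R}_{g,n};\QQ)$ --- equivalently, writing $h=h_0+h_1c_1(V_n)$ with $h_0,h_1$ free of $c_1(V_n)$, that $h\in\mathcal{I}$ forces $h_0\in\mathcal{I}$ and $h_1c_1(V_n)\in\mathcal{I}$ separately. This is true a posteriori from the final presentation (every generator of $\mathcal{I}_{0,n}$ is an eigenvector of this involution), but it is not available at the stage where this corollary is invoked, and it does not follow from Lemma \ref{mod_gn_action_on_V_k}, Lemma \ref{effect_flipping}, or Proposition \ref{prop_symmetries_univ_bundle}. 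Note that the paper quietly sidesteps the same issue elsewhere by restricting to relations $f([\omega],[\point])$ in Corollary \ref{cor_smaller_relations} and by excluding the cut indices in Proposition \ref{prop_cut_down_relation}; your proof would go through verbatim under the analogous hypothesis that $f$ does not involve $c_1(V_k)$ for at least one index $k$ (which you may then take as the merged index). For the statement as written, with $f$ allowed to involve all the $c_1(V_k)$'s, you need an additional argument here, and I would make that the focus of your revision.
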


\subsection{The Classes $[\Sigma]$ and $[\omega]$}

We now make a brief digression on the relationship between the class $[\Sigma]$ and $[\omega]$. It will be convenient later to nail down precisely the linear combination (\ref{lin_comb_symp_class}). What we need is to describe the action of the flips $M_J$ on the class $[\Sigma]$. The issue is that $[\Sigma]$ is not preserved; by Proposition \ref{prop_symmetries_univ_bundle}, the pullback of the universal bundle $\mathbf{E}^\ad$ by $M_J$ is only an isomorphism away from the $x_k$'s. Fix an even $J$, and suppose $k\in J$. Recall that we have small disk neighborhoods $U_k$ with polar coordinate $(s_k,\theta_k)$ around each puncture $x_k$ and a trivialization of the $\U(2)$ bundle $E$. Let $\Sigma^\circ$ denote $\Sigma\setminus\cup_{l=1}^n U_l$. The map $M_J$ lifts to the space of flat connections via an $\SO(3)$ gauge transformation $u_J$, and the map (\ref{flip_on_univ_map}) is an isomorphism of $\mathbf{E}^\ad$ to  $(M_J\times\Id)^*\mathbf{E}^\ad$ over $\mathcal{R}_{g,n}\times\Sigma^*$. The bundles are also isomorphic when restricted to $\mathcal{R}_{g,n}\times U_k$. To see this, we note that the bundles are isomorphic when restricted to $\mathcal{R}_{g,n}\times\{x\}$ for any point $x\in\Sigma^\circ$, and so this is also true for $x\in U_k$. Since $\mathcal{R}_{g,n}\times U_k$ contracts to $\mathcal{R}_{g,n}$, the isomorphism is automatic. In fact, the proof of Lemma \ref{lem_point_class} shows that the restriction of $\mathbf{E}^\ad$ to $\mathcal{R}_{g,n}\times\{x_k\}$ is isomorphic to $\ubar\RR\oplus\mathbf{V}_k$, and so the restriction of  $(M_J\times\Id)^*\mathbf{E}^\ad$ to $\mathcal{R}_{g,n}\times\{x_k\}$ is isomorphic to $\ubar\RR\oplus\mathbf{V}_k^*$, which is isomorphic to $\ubar\RR\oplus\mathbf{V}_k$ as a real bundle. Hence, we can describe the new bundle  $(M_J\times\Id)^*\mathbf{E}^\ad$ as being obtained by cutting $\mathbf{E}^\ad$ along $\mathcal{R}_{g,n}\times\partial U_k$ for each $k$ and regluing with a ``clutching function''. We can compute this function as follows: it is the composition of the following circle of maps:

\xymatrixcolsep{3pc}
\begin{center} \mbox{
\xymatrix{
\mathbf{E}^\ad \ar[d]		     & 			                        & (M_J\times\Id)^*\mathbf{E}^\ad \ar[ll] \\
\ubar\RR\oplus\mathbf{V}_k  \ar[r]^{-1\oplus\Id}& \ubar\RR\oplus\mathbf{V}_k^* \ar[r] & \ubar\RR\oplus(M_J\times\Id)^*\mathbf{V}_k \ar[u] } }
\end{center}
The maps, beginning with the vector $\obar{(t,A,\theta)}$ in $\mathbf{E}^\ad|_x$ for $x=(1,\theta_k)\in\partial U_k$, $t=\ad\left(\begin{smallmatrix}s&-\obar z\\z&-s\end{smallmatrix}\right)\in\Ad E|_x$ and $A$ a flat connection, compose to:
\begin{equation} \begin{split}
\obar{\left(\ad\left(\begin{smallmatrix}s&-\obar z\\z&-s\end{smallmatrix}\right),A\right)}_{\theta_k}\mapsto&\;
\left(s,\obar{\left(\ad\left(\begin{smallmatrix}0&-\obar z\\z&0\end{smallmatrix}\right),A\right)}\right)_{\theta_k} \mapsto \\
&\;\left(-s,\obar{\left(\ad\left(\begin{smallmatrix}0&-z\\ \obar z&0\end{smallmatrix}\right),u_J\cdot A\right)}\right)_{\theta_k}\mapsto\hspace{0.5in}\text{(by equation (\ref{V_k_M_J_map}))} \\
&\;\obar{\left(\ad\left(\begin{smallmatrix}-s&-z\\ \obar z&s\end{smallmatrix}\right),u_J\cdot A\right)}_{\theta_k}\mapsto \\
&\;\obar{\left(\ad\left(\begin{smallmatrix}0&-e^{i\theta_k/2}\\e^{-i\theta_k/2}&0\end{smallmatrix}\right)\left(\begin{smallmatrix}-s&-z\\ \obar z&s\end{smallmatrix}\right)\left(\begin{smallmatrix}0&e^{i\theta_k/2}\\-e^{-i\theta_k/2}&0\end{smallmatrix}\right),A\right)}_{\theta_k} \\
=&\;\obar{\left(\ad\left(\begin{smallmatrix}s&-e^{i\theta_k}\obar z\\e^{-i\theta_k}z&-s\end{smallmatrix}\right),A\right)}_{\theta_k}
\end{split}
\end{equation}
We see that near $x_k$, $\mathbf{E}^\ad$ is isomorphic to the pullback of $\RR\oplus\mathbf{V}_k$ to $\mathcal{R}_{g,n}\times \wt U_k$ for a slightly larger disk neighborhood $\wt U_k\supset U_k$, and $(M_J\times\Id)^*\mathbf{E}^\ad$ is obtained by cutting this bundle along $\mathcal{R}_{g,n}\times\partial U_k$ and regluing $\ubar\RR\oplus\mathbf{V}_k$ to itself via $(s,v)\mapsto(s,e^{-i\theta_k}v)$, for each $k\in J$. It is not difficult to see that the characteristic classes of these two bundles are therefore related by:
\begin{lemma} \label{lem_sigma_vs_symp}
Upon slant product with $[\Sigma]$, the first Pontryagin classes of $\mathbf{E}^\ad$ and $(M_J\times\Id)^*\mathbf{E}^\ad$ are related by:
$$p_1\left((M_J\times\Id)^*\mathbf{E}^\ad\right)/[\Sigma]=p_1\left(\mathbf{E}^\ad\right)/[\Sigma]-2\sum_{k\in J}c_1(\mathbf{V}_k)$$
\end{lemma}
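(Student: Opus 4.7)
The approach is to unpack the local clutching description of $(M_J\times\Id)^*\mathbf{E}^\ad$ given just above the lemma, then compute the resulting change in $p_1$ with the help of the simple formula $p_1(\ubar\RR\oplus L)=-c_1(L)^2$ already used in Lemma \ref{lem_point_class}. First I would introduce, for each $k\in J$, the complex line bundle $\mathcal{L}_k$ on $\mathcal{R}_{g,n}\times\Sigma$ defined by the same clutching construction: trivial on $\mathcal{R}_{g,n}\times(\Sigma\setminus U_k)$ and on $\mathcal{R}_{g,n}\times\widetilde U_k$, glued along $\mathcal{R}_{g,n}\times\partial U_k$ by the scalar function $e^{-i\theta_k}$. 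This bundle is pulled back from $\Sigma$, and its winding number around $\partial U_k$ is $-1$, so topologically $\mathcal{L}_k\cong p_2^*\mathcal{O}_\Sigma(-x_k)$. Consequently $c_1(\mathcal{L}_k)/[\Sigma]=-1\in H^0(\mathcal{R}_{g,n})$ and $c_1(\mathcal{L}_k)^2/[\Sigma]=0$, since $c_1(\mathcal{L}_k)$ is pulled back from $H^2(\Sigma)$.

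The key observation is that the clutching recipe recalled above says precisely the following. Over the neighborhood $\mathcal{R}_{g,n}\times\widetilde U_k$, extend $\mathbf{V}_k$ to the complex line subbundle $\widetilde{\mathbf{V}}_k\subset\mathbf{E}^\ad$ coming from the splitting $\mathbf{E}^\ad\cong\ubar\RR\oplus\mathbf{V}_k$; then the analogous subbundle of $(M_J\times\Id)^*\mathbf{E}^\ad$ over the same neighborhood is isomorphic to $\widetilde{\mathbf{V}}_k\otimes\mathcal{L}_k|_{\mathcal{R}_{g,n}\times\widetilde U_k}$, since the clutching factor $e^{-i\theta_k}$ acts only on the second summand. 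Using $p_1(\ubar\RR\oplus L)=-c_1(L)^2$ over each neighborhood, the local contribution to $p_1\bigl((M_J\times\Id)^*\mathbf{E}^\ad\bigr)-p_1(\mathbf{E}^\ad)$ near $x_k$ is
\begin{equation*}
-\bigl(c_1(\widetilde{\mathbf{V}}_k)+c_1(\mathcal{L}_k)\bigr)^2+c_1(\widetilde{\mathbf{V}}_k)^2=-2\,c_1(\widetilde{\mathbf{V}}_k)\,c_1(\mathcal{L}_k)-c_1(\mathcal{L}_k)^2.
\end{equation*}
Since the two bundles are canonically isomorphic outside the $\widetilde U_k$'s, summing these local contributions over $k\in J$ will give the global class $p_1\bigl((M_J\times\Id)^*\mathbf{E}^\ad\bigr)-p_1(\mathbf{E}^\ad)$.

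Taking slant product with $[\Sigma]$ finishes the calculation. The $c_1(\mathcal{L}_k)^2$ piece vanishes as noted. For the cross term, on the contractible neighborhood $\widetilde U_k$ the class $c_1(\widetilde{\mathbf{V}}_k)$ is the pullback of $c_1(\mathbf{V}_k)$ from $\mathcal{R}_{g,n}$, while $c_1(\mathcal{L}_k)$ is a pullback from $\Sigma$; the standard identity for slant products of K\"unneth-type classes gives $\bigl(c_1(\widetilde{\mathbf{V}}_k)\cdot c_1(\mathcal{L}_k)\bigr)/[\Sigma]=c_1(\mathbf{V}_k)\cdot\bigl(c_1(\mathcal{L}_k)/[\Sigma]\bigr)=-c_1(\mathbf{V}_k)$. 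Summing over $k\in J$ yields the desired formula.

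The main technical obstacle is justifying the global claim -- that the local $p_1$-contributions really assemble into the global difference $p_1\bigl((M_J\times\Id)^*\mathbf{E}^\ad\bigr)-p_1(\mathbf{E}^\ad)$ and not merely agree with it locally. This is handled by a Mayer--Vietoris argument for the open cover $\Sigma=(\Sigma\setminus\bigcup_{k\in J}U_k)\cup\bigcup_{k\in J}\widetilde U_k$: both bundles are canonically isomorphic on the first piece and on each $\widetilde U_k$ separately, so the global class difference is determined by the restriction to each $\widetilde U_k$, which is exactly the local formula computed above. Everything else in the argument is an essentially formal application of characteristic class identities together with the explicit clutching data.
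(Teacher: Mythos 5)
Your approach is the one the paper intends: the paper's own proof of this lemma is the single sentence ``standard bundle theory and unwinding the definition of the slant product,'' and your write-up supplies exactly the missing computation from the clutching description derived in the paragraph preceding the lemma. Two remarks, neither of which is a gap in the idea. First, check your sign bookkeeping: with $c_1(\mathcal{L}_k)/[\Sigma]=-1$ the cross term $-2\,c_1(\widetilde{\mathbf{V}}_k)c_1(\mathcal{L}_k)$ slants to $+2\,c_1(\mathbf{V}_k)$, so your conventions produce $+2\sum_{k\in J}c_1(\mathbf{V}_k)$ rather than the $-2\sum$ in the statement; to land on the stated sign you need $\deg\mathcal{L}_k=+1$, i.e.\ the opposite orientation convention for the clutching circle $\partial U_k$. (The paper is itself loose here: the sentence following the lemma asserts the flip-invariant combination is $[\Sigma]-\sum_k c_1(V_k)$, while equation (\ref{flip_invt_class}) has a $+$, so the convention is not pinned down in the source either.) Second, your ``local contribution'' formula cannot be read literally on $\mathcal{R}_{g,n}\times\widetilde U_k$, where $c_1(\mathcal{L}_k)$ restricts to zero; the clean statement is that the two bundles are canonically isomorphic off $\mathcal{R}_{g,n}\times\bigcup_{k\in J}U_k$, so their $p_1$'s differ by a class lifting to $H^4\bigl(\mathcal{R}_{g,n}\times\Sigma,\;\mathcal{R}_{g,n}\times(\Sigma\setminus\bigcup_k U_k)\bigr)$, which by excision splits into pieces supported near each $x_k$, and on each piece the cross term is the product of the absolute class $c_1(\widetilde{\mathbf{V}}_k)$ with the relative class representing $c_1(\mathcal{L}_k)$. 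Your Mayer--Vietoris remark is the right instinct; phrasing it through the relative difference class makes the localization rigorous.
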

\begin{proof}
This is standard bundle theory and unwinding the definition of the slant product.
\end{proof}
We can use the lemma to determine the relationship between the cohomology classes $[\Sigma]$ and $[\omega]$. As before, by symmetry we know $[\omega]=s[\Sigma]+t\sum_{k=1}^nc_1(V_k)$ for some constants $s,t$. The class $\omega$ is invariant under flips, and the lemma implies that the only linear combinations of $[\Sigma]$ and $c_1(V_k)$ which are flip-invariant are scalar multiples of $[\Sigma]-\sum_{k=1}^nc_1(V_k)$. We conclude that
\begin{equation} \label{flip_invt_class}
[\omega]=A\left([\Sigma]+\sum_{k=1}^nc_1(V_k)\right)
\end{equation}
for some nonzero constant $A$. In fact, from \cite{weitsman_toric_I} we see that in our notation $A=-1/4$.

\subsection{Symplectic Volumes} \label{subsec_symp_vol}

Define the graded commutative polynomial algebra
$$\mathbb{A}_{g,n}:=\CC[\alpha,\beta,\delta_1,\ldots,\delta_n]\otimes\wedge^*[\psi_1,\ldots,\psi_{2g}],$$
where we assign $\alpha$ and $\delta_k$ degree 2, $\psi_j$ degree 3, and $\beta$ degree 4 (i.e. $\alpha$, $\beta$, and the $d_k$'s are commutative and the $\psi_j$'s anti-commute with each other). We denote by $\mathbb{H}_{g,n}$ the $\CC$-algebra $H^*(\mathcal{R}_{g,n};\CC)$. We can define a map $\Psi:\mathbb{A}_{g,n}\to\mathbb{H}_{g,n}$ via:
\begin{align*}
\alpha\mapsto&\;2[\omega] \\
\beta\mapsto&\;-\tfrac{1}{4}[\point] \\
\psi_j\mapsto&\;-\tfrac{1}{4}[a_j] \\
\delta_k\mapsto&\;\tfrac{1}{2}c_1(V_k)
\end{align*}
The fractional factors in front of each generator are used to make the presentation of the cohomology ring simpler and stem from the fact that we use the Pontryagin class of an adjoint universal bundle, rather than the second Chern class of a standard universal bundle. We remark that this notation gives
\begin{equation} \label{alpha_delta}
\Psi(\alpha)=-\tfrac{1}{2}[\Sigma]+\sum_{k=1}^n\Psi(\delta_k).
\end{equation}

What we have proved so far is that $\mathbb{H}_{g,n}$ is isomorphic to the ring $\mathbb{A}_{g,n}/\mathcal{I}_{g,n}$ for some ideal of relations $\mathcal{I}_{g,n}=\psi^{-1}(0)$, which includes the relations $\beta-\delta_k^2$ for each $k$. For each $g,n$, $n\geq 1$, there is a natural inclusion
$$\iota^{0,1}_{g,n}:\mathcal{R}_{g,n}\into\mathcal{R}_{g,n+2}$$
arising from the isomorphism $D^-_{n,n+1}\cap D^+_{n+1,n+2}\mapsto\mathcal{R}_{g,n}$, which by virtue of the results of this section has the property that pulling back the images under $\Psi$ of the generators $\alpha$, $\beta$, $\psi_j$, and $\delta_k$ (for $k=1,\ldots,n$) gives the corresponding generators for the smaller cohomology ring. It can also be checked that $\delta_{n+1}$ pulls back to $-\delta_n$ and $\delta_{n+2}$ pulls back to $\delta_n$. Let $\pi^{0,1}_{g,n}:\mathbb{A}_{g,n+2}\to\mathbb{A}_{g,n}$ denote the corresponding ring map. Then Corollary \ref{cor_smaller_relations} implies that $\pi(\mathcal{I}_{g,n})\subset\mathcal{I}_{g,n}$. More generally, there are inclusions
$$\iota_{g,n}^{r,s}:\mathcal{R}_{g,n}\into\mathcal{R}_{g+r,n+2s}$$
with corresponding ring maps $\pi^{r,s}_{g,n}:\mathbb{A}_{g+r,n+2s}\to\mathbb{A}_{g,n}$, under which
\begin{align*}
\delta_k\mapsto \begin{cases}(-1)^{k-n}\delta_n\text{, }&k>n \\ \delta_k\text{, }&k\leq n\end{cases},\hspace{36pt}
\alpha_j\mapsto \begin{cases}\pm 0\text{, }&j>2g \\ \alpha_j\text{, }&j\leq 2g\end{cases}
\end{align*}
and we have the inclusion $\pi^{r,s}_{g,n}(\mathcal{I}_{g+r,n+2s})\subset\mathcal{I}_{g,n}$. This is a direct consequence of Corollary \ref{cor_smaller_relations} and \cite{nitsure_cohom}. This encapsulates the inductive structure of the moduli spaces, and in the case of no marked points is well known.

\begin{nonumnotation}
From now on, we denote by a hat the image under $\Psi$ in $\mathbb{H}_{g,n}$ of a generator in $\mathbb{A}_{g,n}$ by a hat. For example, we have $\wh\beta=\wh\delta_k^2$ for all $k$.
\end{nonumnotation}

There is an immediate relation in the cohomology ring resulting from comparing different versions of the volume class on $\mathcal{R}_{g,n}$. Namely, if we let $2D=\dim_\RR(\mathcal{R}_{g,n})=6g+2n-6$ and set $e=D\mod{2}=0,1$, then the images of $\wh\alpha^D$ and $\wh\beta^{\floor{D/2}}\wh\alpha^e$ are both multiples of eachother. We are thus interested in pairings of the form $\pair{\wh\alpha^r\wh\beta^s}{\mathcal{R}_{g,n}}$ for $r+2s=D$. Lemma \ref{lem_point_class_pd} allows us to compute these, once we know the symplectic volume formula (\ref{symp_volume}), at least for $s\leq m$:
\begin{equation} \label{ab_pairings}
\langle\wh\alpha^r\wh\beta^s,\mathcal{R}_{g,n}\rangle=\frac{(3g+n-2s-3)!}{(2g+n-2s-3)!}\abs{E_{2g+n-2s-3}}=\frac{r!}{(r-g)!}\abs{E_{r-g}}.
\end{equation}
However, there will be relations of smaller degree involving only $\wh\alpha$ and $\wh\beta$, which arises from this formula and Poincar\'e duality. The role of Poincar\'e duality is in the following statement: if $f\in\mathbb{A}_{g,n}$ with $\deg f=r$ and $\pair{\Psi(f)\Psi(f'),\mathcal{R}_{g,n}}=0$ for all $f'$ of degree $2D-f'$, then $f$ is a relation in $\mathcal{I}_{g,n}$. It also implies that there must be at least one relation of degree $D+2$, or just over half the dimension, since the Betti numbers must be symmetric about the middle dimension and the dimension of the degree $d$ part of $\mathbb{A}_{g,n}$ strictly increases with $d$.

\section{The Case $g=0$}

Recall that we have set $n=2m+1>1$, and we now set $g=0$. In this case $\mathbb{A}_{0,n}$ is generated by $\alpha$, $\beta$, and the $\delta_k$'s, and $D=n-3=2m-2$ (in fact, $\beta$ is redundant).

\begin{prop} \label{prop_genus_0_first_rel}
There is a unique polynomial $r_{0,n}(\alpha,\beta)$ in $\mathcal{I}_{0,n}$ of degree $2m$ monic with respect to $\alpha$. It is obtained via the recursion
\begin{equation} \label{genus_0_recurrence}
\begin{split}
r_{0,2m+3}(\alpha,\beta)=&\;\alpha\cdot r_{0,2m+1}(\alpha,\beta)-m^2\beta\cdot r_{0,2m-1}(\alpha,\beta) \\
r_{0,1}(\alpha,\beta)=&\;1,\hspace{18pt}r_{0,3}(\alpha,\beta)=\alpha
\end{split}
\end{equation}
\end{prop}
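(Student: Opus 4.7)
The plan is to recast the desired relation as an orthogonality statement for polynomials against the moment sequence $\abs{E_{2k}}$ of absolute Euler numbers, and then invoke the theory of orthogonal polynomials. By Poincar\'e duality and the flip symmetry (Proposition \ref{prop_preserves_symp_form} together with Lemma \ref{effect_flipping}), the assertion $r(\wh\alpha,\wh\beta)=0$ in $H^{2m}(\mathcal{R}_{0,n};\QQ)$ is equivalent to the vanishing of all top pairings $\pair{r(\wh\alpha,\wh\beta)\,\wh\delta_K\wh\alpha^a\wh\beta^b}{[\mathcal{R}_{0,n}]}$ with $K$ of even size $2s$ (after normalizing via $\wh\delta_k^2=\wh\beta$ from Lemma \ref{lem_point_class}), because any flip $M_J$ with $|J|$ even and $|J\cap K|$ odd sends such a pairing to its negative. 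For each even $|K|=2s$, I will expand $\wh\delta_K$ as a signed linear combination of Poincar\'e duals to the transverse intersections $D^{\eps_1}_{k_1,l}\cap\cdots\cap D^{\eps_{2s}}_{k_{2s},l}$, each symplectomorphic to $\mathcal{R}_{0,n-2s}$ via a map under which $\wh\alpha$ and $\wh\beta$ pull back to themselves (Propositions \ref{univ_bundle_recursive} and \ref{prop_symp_form_recursive}). Combined with the symplectic volume formula of Corollary \ref{cor_symp_volume} and equation \eqref{ab_pairings}, each reduced top pairing becomes $\sum_i c_{m,i}\abs{E_{i+a}}$, where $r(\alpha,\beta)=\sum_i c_{m,i}\alpha^i\beta^{(m-i)/2}$. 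Ranging over the admissible $(s,a,b)$ with $a+2b+2s=m-2$, the conditions collapse into the single set $\sum_i c_{m,i}\abs{E_{i+a}}=0$ for all $a\in\{0,\ldots,m-2\}$ with $a\equiv m\pmod{2}$.

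Substituting $x=\alpha/\sqrt{\beta}$ turns $r$ into a monic polynomial $p_m(x)$ of degree $m$ whose parity matches that of $m$. Because of this parity and the evenness of the moment functional, the conditions above are equivalent to the full orthogonality $L(p_m(x)x^k)=0$ for $0\leq k\leq m-1$, where $L$ is the linear functional on $\QQ[x]$ with $L(x^{2k})=\abs{E_{2k}}$ and $L(x^{2k+1})=0$. The recursion $p_{m+1}(x)=x p_m(x)-m^2 p_{m-1}(x)$, with $p_0=1$, $p_1=x$, is exactly the monic Jacobi three-term recurrence for a symmetric orthogonal-polynomial family with subdiagonal parameters $b_m=m^2$; by Favard's theorem these $p_m$ are orthogonal with respect to a unique even linear functional $L_0$ on $\QQ[x]$. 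Identifying $L_0$ with $L$ is then equivalent to the classical J-fraction expansion
\begin{equation*}
\sum_{k\geq 0}\abs{E_{2k}}z^{-2k-1}=\cfrac{1}{z-\cfrac{1}{z-\cfrac{4}{z-\cfrac{9}{z-\cdots}}}},
\end{equation*}
derivable from Euler's continued fraction for $\tan(z)$ together with the Taylor series $\sec(z)=\sum_{k\geq 0}\abs{E_{2k}}z^{2k}/(2k)!$. Granting this identity, the required orthogonality follows from the standard theory of orthogonal polynomials, giving existence of $r_{0,n}$.

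Uniqueness among monic polynomials of degree $2m$ in $\alpha$ follows because the orthogonality conditions constitute a linear system of full rank on the free coefficients, its coefficient matrix being a non-degenerate Hankel-type moment matrix whose non-degeneracy is implied by the positivity of the Jacobi parameters $b_m=m^2>0$. The principal obstacle is the continued-fraction identity itself: it is not immediately apparent from the standard Taylor series for $\sec$, and its rigorous verification---whether by manipulation of Euler's continued fraction for $\tan$ or by inductively computing the norming constants $h_m=L(p_m^2)=(m!)^2$ and matching moments---carries the main analytic content of the proof.
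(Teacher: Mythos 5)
Your proposal is correct and follows essentially the same route as the paper: both reduce the vanishing of $r_{0,n}(\wh\alpha,\wh\beta)$ to the kernel of the Hankel matrix $\bigl(\abs{E_{2e+2i+2j}}\bigr)$ via the cut-down symplectomorphisms $D^{\pm}_{k,l}\cap\cdots\cong\mathcal{R}_{0,n-2s}$ and the symplectic volume formula, and then solve that linear system by recognizing the $r_{0,2m+1}$ as the monic orthogonal polynomials for the moment sequence $\abs{E_{2k}}$, with the recursion coefficients $b_m=m^2$ read off from the continued fraction expansion of the Euler-number generating function (the paper's Appendix, citing Flajolet, supplies exactly the identity you flag as the remaining analytic content). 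The only organizational difference is minor: you dispose of the mixed pairings involving the $\wh\delta_k$'s directly, using flip symmetry to kill the odd-$\abs{K}$ cases and the cut-down maps to fold the even ones into the same Hankel conditions, whereas the paper handles them by an induction on $n$, observing that the recursion places $r_{0,n}$ in $\mathcal{I}_{0,n-2t}$ and then invoking Corollary \ref{cor_smaller_relations}.
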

\begin{proof}
We will argue by induction. We know a relation of degree $2m$ must exist due to Poincar\'e duality. The content of the lemma is that there is a relation involving only $\alpha$ and $\beta$, that it is unique, and that there is a recursive formula in $n$ which it satisfies. To see that there is a relation in $\alpha$ and $\beta$ and that it is unique, let $W_m$ denote the vector space of possible polynomials $r_{0,n}(\alpha,\beta)$ of degree $2m$. Letting $e$ denote the remainder upon dividing $m$ by 2, such a polynomial looks like
$$r_{0,n}(\alpha,\beta)=A_m\alpha^m+A_{m-2}\alpha^{m-2}\beta+\ldots+A_e\alpha^e\beta^{\floor{m/2}}$$
so that $\dim W_m=\floor{m/2}+1$. For each of the $\floor{m/2}$ choices of $s=\floor{m/2}-1,\floor{m/2}-2,\ldots,0$, there is a monomial $\alpha^{m-2-2s}\beta^s$ of complementary degree $2m-4$. We get $\floor{m/2}$ linear functionals
$$\ell_s:r_{0,n}\mapsto\pair{r_{0,n}\wh\alpha^{m-2-2s}\wh\beta^s}{\mathcal{R}_{0,n}}$$
and the relation we seek will lie in the kernel of each of them. Since $\dim W_m$ has one greater dimension than this collection of functionals, a (nonzero) $r_{0,n}$ will certainly exist.

We can rephrase this as saying the vector $(A_e,\ldots,A_{m-2},A_m)\in\RR^{\floor{m/2}+1}$ of coefficients is in the kernel of the $(\floor{m/2}-1)\times\floor{m/2}$ matrix $(E_{ij})$ where $$E_{ij}=\pair{\wh\alpha^{2e+2j+2i}\wh\beta^{\floor{m/2}-1-i}}{\mathcal{R}_{0,n}}$$
with $i=0,\ldots,\floor{m/2}-1$ and $j=0,\ldots,\floor{m/2}$. By the formula (\ref{ab_pairings}), we have in the case $g=0$ the very simple expression $E_{ij}=\abs{E_{2e+2i+2j}}$. This is an example of a ``Hankel'' matrix, a type of matrix which arises when studying the so-called ``moment problem'' in connection with the theory of orthogonal polynomials and continued fractions (here, with the sequence of moments $\abs{E_0},\abs{E_2},\abs{E_4},\ldots$). This theory, along with the known continued fraction expansion for the formal generating function $\sum_{i=0}^\oo \abs{E_i}x^i$, implies that the polynomial $r_{0,n}$ is unique up to scale and satisfies the beautiful recurrence relation (\ref{genus_0_recurrence}). We relegate the proof of this formula to the appendix.

The polynomial $r_{0,n}$ is rigged to pair to 0 with each of the complementary degree monomials involving just $\wh\alpha$ and $\wh\beta$. It remains to check that this $r_{0,n}$ pairs to 0 with complementary polynomials in not just $\wh\alpha$ and $\wh\beta$ but also the $\delta_k$'s. Because of the relation $\wh\delta_k^2=4\wh\beta$, we simply need to check that $r_{0,n}$ pairs to 0 with terms of the form $\wh\alpha^r\wh\beta^s\wh\delta_{k_1}\cdots\wh\delta_{k_t}$ with $r+2s+t=m-2$ and the $k_i$'s distinct. By inductive hypothesis, we have the relation $r_{0,n-2k}$ in the ideal $\mathcal{I}_{0,n-2s}$. The recurrence relation implies that $r_{0,n}$ is also a relation in $\mathcal{I}_{0,n-2s}$. By Corollary \ref{cor_smaller_relations} the product $\delta_{k_1}\cdots\delta_{k_t}\cdot r_{0,n}$ is a relation in $\mathcal{I}_{0,n}$, which is enough to ensure the vanishing of all pairings in $\mathbb{A}_{0,n}$ with $r_{0,n}$.
\end{proof}

From this, we can write down a large collection of relations which must hold in the ring $\mathbb{H}_{0,n}.$

\begin{cor} \label{cor_genus_0_relations}
For each $J\subset\{1,\ldots,n\}$ with $\abs{J}=s\leq m$, the polynomial
\begin{equation} \label{genus_0_rel}
R^J_{0,n}=r_{0,n-2s}(\alpha,\beta)\cdot\prod_{k\in J}\delta_k
\end{equation}
is in the ideal of relations $\mathcal{I}_{0,n}$
\end{cor}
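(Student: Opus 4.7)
The approach is to invoke Corollary~\ref{cor_smaller_relations} directly, since that result was set up precisely to lift a relation in a smaller moduli space to a relation multiplied by a product of $c_1(V_k)$'s in a larger one.

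First I would note that for $s \leq m-1$ the polynomial $r_{0,n-2s}(\alpha,\beta)$ is already known to lie in $\mathcal{I}_{0,n-2s}$ by Proposition~\ref{prop_genus_0_first_rel}. The extremal case $s = m$ (where $r_{0,1} = 1$) needs to be handled separately: the defining relation of $\wh\Gamma_{0,1}$ reads $d_1 = \zeta$, and imposing both $\rho(\zeta) = -\Id$ and $\Tr\rho(d_1) = 2\cos(2\pi/4) = 0$ forces $\Tr(-\Id) = 0$, a contradiction. Hence $\mathcal{R}_{0,1} = \emptyset$, its rational cohomology ring is the zero ring, and $r_{0,1} = 1$ is trivially a relation in $\mathcal{I}_{0,1}$.

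With this in hand, I would apply Corollary~\ref{cor_smaller_relations} with $g=0$, $r=0$, indices $k_1,\ldots,k_s$ enumerating $J$, and $f = r_{0,n-2s}$ rewritten as a polynomial in $[\omega]$ and $[\point]$ via $\alpha = 2[\omega]$ and $\beta = -\tfrac{1}{4}[\point]$. The conclusion is that
$$c_1(V_{k_1})\cdots c_1(V_{k_s}) \cdot r_{0,n-2s}\bigl([\omega],[\point]\bigr) = 0 \quad\text{in } H^*(\mathcal{R}_{0,n};\QQ).$$
Translating via $\Psi(\delta_k) = \tfrac{1}{2}c_1(V_k)$, the left-hand side equals $2^s\,\Psi(R^J_{0,n})$, which forces $\Psi(R^J_{0,n}) = 0$, i.e., $R^J_{0,n} \in \mathcal{I}_{0,n}$.

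I do not expect any genuinely hard step here; the heavy lifting---identifying products of $c_1(V_k)$'s with Poincar\'e duals of embedded copies of the smaller moduli space, and checking compatibility of the universal bundle and symplectic form under these embeddings---has already been carried out in Propositions~\ref{prop_cut_down_relation}, \ref{univ_bundle_recursive}, \ref{prop_symp_form_recursive}, and packaged for immediate use as Corollary~\ref{cor_smaller_relations}. The only mild nuisance is the boundary case $s = m$ treated above, and even this could alternatively be absorbed into the main statement by noting that Proposition~\ref{prop_cut_down_relation} applied with $s=m$ presents $\prod_{k\in J}c_1(V_k)$ as the Poincar\'e dual of $2^m$ copies of the empty space $\mathcal{R}_{0,1}$, hence as the zero class.
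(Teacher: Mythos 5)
Your proposal is correct and follows essentially the same route as the paper, whose entire proof is ``combine Proposition~\ref{prop_genus_0_first_rel} and Corollary~\ref{cor_smaller_relations}.'' Your explicit treatment of the extremal case $s=m$ (observing that $\mathcal{R}_{0,1}=\emptyset$ so that $r_{0,1}=1$ is vacuously a relation) is a detail the paper leaves implicit, and is a worthwhile addition.
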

\begin{proof}
Simply combine Proposition \ref{prop_genus_0_first_rel} and Corollary \ref{cor_smaller_relations}.
\end{proof}

All that remains to show is that this is a complete set of relations. For this we mimic the approach in \cite{siebert_tian} and describe an explicit basis for $H^*(\mathcal{R}_{0,n})$, and show that any other monomial can be expressed a linear combination of monomials in the basis and the relations $R^J_{0,n}$. In what follows, we denote $\ubar\delta^J=\prod_{k\in J}\delta_k$.

\begin{lemma} \label{lem_genus_0_basis}
Let $\mathcal{S}_{0,n}$ denote the collection of monomials $\alpha^a\beta^b\ubar\delta^J$ with $a+b+\abs{J}<m$. Then any other monomial $\alpha^{a'}\beta^{b'}\ubar\delta^{J'}$ with $a'+b'+\abs{J'}\geq m$ can be reduced to a linear combination of monomials in $\mathcal{S}_{0,n}$ and the relations $R^J_{0,n}$ and $\delta_i^2-\beta$.
\end{lemma}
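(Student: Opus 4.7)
The plan is to prove the reduction by strong induction on the weight
$$w(\alpha^a\beta^b\ubar\delta^J)\;:=\;a+b+|J|.$$
The base case $w<m$ is immediate since then the monomial already lies in $\mathcal{S}_{0,n}$. For the inductive step, I fix a monomial $\alpha^{a'}\beta^{b'}\ubar\delta^{J'}$ of weight $w\geq m$ and split into three cases according to $|J'|$ and $a'$.

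If $|J'|\geq m$, I pick any $J_0\subseteq J'$ of size exactly $m$; since $r_{0,1}=1$, the relation $R^{J_0}_{0,n}=\ubar\delta^{J_0}$ is simply a product of $\delta$'s, and
$$\alpha^{a'}\beta^{b'}\ubar\delta^{J'}\;=\;\alpha^{a'}\beta^{b'}\ubar\delta^{J'\setminus J_0}\cdot R^{J_0}_{0,n}$$
is immediately an $\mathbb{A}_{0,n}$-multiple of a relation. If $|J'|=s<m$ and $a'\geq m-s$, I instead use $R^{J'}_{0,n}=r_{0,n-2s}(\alpha,\beta)\,\ubar\delta^{J'}$: the recursion (\ref{genus_0_recurrence}) shows that $r_{0,2(m-s)+1}$ is monic in $\alpha$ of degree $m-s$ and every non-leading monomial has the form $c_{ij}\alpha^i\beta^j$ with $i+2j=m-s$ and $j\geq 1$. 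Substituting $\alpha^{m-s}\ubar\delta^{J'}=R^{J'}_{0,n}-\sum c_{ij}\alpha^i\beta^j\ubar\delta^{J'}$ and multiplying by $\alpha^{a'-(m-s)}\beta^{b'}$ expresses the original monomial as a polynomial multiple of $R^{J'}_{0,n}$ plus terms $\alpha^{a''}\beta^{b''}\ubar\delta^{J'}$ whose weight satisfies $a''+b''+|J'|=w-j<w$, so these reduce by the inductive hypothesis.

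The remaining case is $|J'|=s<m$ with $a'<m-s$; then necessarily $b'>0$ and there exists some $i\notin J'$ (since $s<m\leq n$). I exploit the relation $\delta_i^2-\beta$:
$$\alpha^{a'}\beta^{b'}\ubar\delta^{J'}\;=\;\alpha^{a'}\beta^{b'-1}\ubar\delta^{J'\cup\{i\}}\;-\;\alpha^{a'}\beta^{b'-1}\ubar\delta^{J'}\cdot(\delta_i^2-\beta).$$
The second summand is a polynomial multiple of a relation; the first has the same weight as the original but enlarged index set and smaller $\beta$-exponent. Iterating this swap with fresh indices each time reduces the ``gap'' $m-s-a'$ by one per step, so after exactly $m-s-a'$ iterations the monomial acquires an index set of size $m-a'$ and $\beta$-exponent $b'-(m-s-a')$, at which point either $a'=m-|J_{\text{new}}|$ (placing us in the second case above) or $a'=0$ and $|J_{\text{new}}|=m$ (placing us in the first). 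That enough swaps and enough fresh indices exist follows from $b'\geq m-s-a'$ (equivalent to $w\geq m$) and $s+(m-s-a')=m-a'\leq n$.

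I expect the main subtlety to be this third case: the swap via $\delta_i^2-\beta$ preserves the weight rather than reducing it, so the inductive hypothesis does not directly apply to the intermediate monomials produced. The key observation saving the argument is that each swap strictly decreases $b$ while strictly increasing $|J|$, forcing the process to terminate in a controlled number of steps in a state that matches the hypothesis of Case 1 or Case 2, both of which honestly reduce the weight (or exhibit the monomial as a polynomial multiple of a single relation). With this termination bookkeeping in hand, everything else is a direct application of the recursion (\ref{genus_0_recurrence}) and the leading-term structure of $r_{0,2(m-s)+1}$.
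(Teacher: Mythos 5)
Your Cases 1 and 2 are sound, and organizing the induction on the weight $a+b+\abs{J}$ for fixed $n$ (rather than on $n$, as the paper does) would be a legitimate alternative route. The problem is Case 3. The displayed identity
$$\alpha^{a'}\beta^{b'}\ubar\delta^{J'}=\alpha^{a'}\beta^{b'-1}\ubar\delta^{J'\cup\{i\}}-\alpha^{a'}\beta^{b'-1}\ubar\delta^{J'}\cdot(\delta_i^2-\beta)$$
is false: expanding the right side gives $\alpha^{a'}\beta^{b'}\ubar\delta^{J'}+\alpha^{a'}\beta^{b'-1}(\delta_i-\delta_i^2)\ubar\delta^{J'}$, and the extra term does not vanish. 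You can see it must fail without computing: $\beta$ has degree $4$ while $\delta_i$ has degree $2$, so $\alpha^{a'}\beta^{b'-1}\ubar\delta^{J'\cup\{i\}}$ has degree two less than the left side, whereas $\delta_i^2-\beta$ is homogeneous of degree $4$, so no multiple of it can bridge the difference. The honest substitution $\beta\equiv\delta_i^2$ produces $\alpha^{a'}\beta^{b'-1}\delta_i^2\ubar\delta^{J'}$, which carries a repeated index; normalizing it back to the standard form $\alpha^a\beta^b\ubar\delta^J$ via $\delta_i^2=\beta$ returns exactly the monomial you started with. So the ``swap'' makes no progress, and Case 3 --- which contains genuinely occurring monomials such as $\beta^m$ and, more generally, every $\alpha^{a'}\beta^{b'}$ with $a'<m\leq a'+b'$, none of which are reachable by Case 2 --- is unproved.

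The case can be repaired, but the fix must use the relations $R^{J}_{0,n}$ for larger index sets, not the single-index swap. Set $t=m-s-a'\leq b'$ and choose $t$ distinct indices $i_1,\ldots,i_t\notin J'$ (possible since $s+t=m-a'\leq m<n$). Modulo multiples of the $\delta_{i_l}^2-\beta$, your monomial equals $\alpha^{a'}\beta^{b'-t}\,\delta_{i_1}\cdots\delta_{i_t}\cdot\ubar\delta^{J''}$ with $J''=J'\cup\{i_1,\ldots,i_t\}$ of cardinality $m-a'$. Now $R^{J''}_{0,n}=r_{0,2a'+1}\,\ubar\delta^{J''}$ has $\alpha$-leading term $\alpha^{a'}\ubar\delta^{J''}$, so substituting as in your Case 2 and then converting the resulting squared factors $\delta_{i_l}^2$ back into $\beta$'s produces terms $\alpha^{a'-2j}\beta^{b'+j}\ubar\delta^{J'}$ with $j\geq 1$, of weight $w-j<w$, which your inductive hypothesis handles. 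With that replacement your induction closes; as written, it does not. For comparison, the paper sidesteps this entirely by inducting on $n$: after using the mapping class group to assume $\delta_{n+2}$ divides the monomial, it strips that factor off, reduces in $\mathbb{A}_{0,n}$, and multiplies back, using $\delta_{n+2}\cdot R^K_{0,n}=R^{K\cup\{n+2\}}_{0,n+2}$.
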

\begin{proof}
We can certainly assume $\abs{J}<m$, since $\ubar\delta^J$ is a relation if $\abs{J}\geq m$. We first treat the case $J=\emptyset$. Because the leading term (with respect to $\alpha$) of $R^\emptyset_{0,n}=r_{0,n}$ is $\alpha^m$ and all other terms are monomials in $\alpha$ and $\beta$ with lower exponent sum, we can certainly reduce the monomial $\alpha^a\beta^b$ for $a+b\geq m$ to a linear combination of $\alpha^r\beta^s$ with $r+s<m$. Hence, we suppose $\abs{J}\geq 1$. Now, suppose that $\phi\in\Mod_{0,n}$. If we can reduce $M_\phi^*(z)$ for a monomial $z$, then we can certainly reduce $z$, because the collection of relations $R^J_{0,n}$ is preserved by the mapping class group action. Since the mapping class group action serves to permute the $\delta_i$'s, without a loss of generality we may prove the lemma for monomials with $J=J'\cup\{\delta_{n}\}$ where $J'\subset\{1,\ldots,n-2\}$.

We argue by induction on $n$. Suppose the lemma is true for $n$ and for $J\subset\{1,\ldots,n+2\}$ assume that $J=J'\cup\{\delta_{n+2}\}$ with $J'\subset\{1,\ldots,n\}$. Suppose that $a+b+\abs{J}\geq m+1$. Then $a+b+\abs{J'}\geq m$ and so by inductive hypothesis, the monomial $\alpha^a\beta^b\ubar\delta^{J'}$ may be reduced to a linear combination of relations $R^K_{0,n}$, $\delta_i^2-\beta$, and monomials in $\mathcal{S}_{0,n}$. Multiplying $R^K_{0,n}$ by $\delta_{n+2}$ gives the relation $R^{K\cup\{\delta_{n+2}\}}_{0,n+2}$, and so multiplying this linear combination by $\delta_{n+2}$ gives a reduction for our monomial $\alpha^a\beta^b\ubar\delta^J$, as desired. This completes the proof.
\end{proof}

\begin{prop} \label{prop_genus_0_complete}
Along with the relations $\delta_i^2-\beta$, the set of relations $R_{0,n}^J$ is a complete set.
\end{prop}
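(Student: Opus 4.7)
The plan is to show that the ideal $\mathcal{J}_{0,n} \subset \mathbb{A}_{0,n}$ generated by the relations $\delta_i^2 - \beta$ and the polynomials $R_{0,n}^J$ of Corollary \ref{cor_genus_0_relations} coincides with the full relation ideal $\mathcal{I}_{0,n} = \ker \Psi$. The containment $\mathcal{J}_{0,n} \subseteq \mathcal{I}_{0,n}$ is already established, so there is a canonical graded surjection $\pi \colon \mathbb{A}_{0,n}/\mathcal{J}_{0,n} \twoheadrightarrow \mathbb{H}_{0,n}$, and I plan to upgrade it to an isomorphism by matching dimensions in each degree. By Lemma \ref{lem_genus_0_basis} the set $\mathcal{S}_{0,n}$ spans $\mathbb{A}_{0,n}/\mathcal{J}_{0,n}$, so $\dim H^d(\mathcal{R}_{0,n}) \leq \dim(\mathbb{A}_{0,n}/\mathcal{J}_{0,n})^d \leq |\mathcal{S}_{0,n}^d|$; it therefore suffices to prove the graded equality $|\mathcal{S}_{0,n}^d| = \dim H^d(\mathcal{R}_{0,n};\QQ)$ for every $d$.

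I would split the verification at the middle degree $n - 3 = 2m - 2$. For $d \leq 2m - 2$, a monomial $\alpha^a \beta^b \ubar\delta^J$ of degree $d = 2a + 4b + 2|J|$ automatically obeys $a + b + |J| \leq a + 2b + |J| = d/2 \leq m - 1$, so $\mathcal{S}_{0,n}^d$ exhausts the entire collection of such monomials. This collection coincides bijectively with a basis of $\mathbb{B}_{0,n}$ in degree $d$: the rings $\mathbb{A}_{0,n}/(\delta_i^2 - \beta)$ and $\mathbb{B}_{0,n} = \CC[\alpha,\beta,\delta_1,\ldots,\delta_n]/(\delta_i^2)$ share the common ordered-monomial basis $\alpha^a \beta^b \prod_{k \in K}\delta_k$, hence identical Poincar\'e polynomials. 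Proposition \ref{prop_genus_0_poly_monomials} then identifies this count with $\dim H^d(\mathcal{R}_{0,n})$.

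For $d = 2m + 2r$ with $r \geq 0$, I would apply Poincar\'e duality to get $\dim H^d(\mathcal{R}_{0,n}) = \dim H^{2m - 4 - 2r}(\mathcal{R}_{0,n})$. A monomial $\alpha^a \beta^b \ubar\delta^J$ of degree $d$ lies in $\mathcal{S}_{0,n}$ exactly when $a + 2b + |J| = m + r$ and $b \geq r + 1$, and the substitution $b \mapsto b' = b - r - 1$ is an explicit bijection onto triples $(a, b', J)$ with $a + 2b' + |J| = m - r - 2$; by the previous case these are counted by $\dim H^{2m - 4 - 2r}(\mathcal{R}_{0,n})$. This also automatically yields $\mathcal{S}_{0,n}^d = \emptyset$ when $r \geq m - 1$, consistent with $H^d = 0$ for $d > 2D = 4m - 4$.

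The two cases together establish the graded equality, so $\pi$ is forced to be an isomorphism and $\mathcal{J}_{0,n} = \mathcal{I}_{0,n}$. The main obstacle is nothing conceptual but the combinatorial bookkeeping in the bijection above; the essential geometric and algebraic input is the fact that every relation $R_{0,n}^J$ has the same degree $2m$, which is precisely the first degree above the middle where Poincar\'e duality demands new relations appear.
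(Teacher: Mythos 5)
Your proposal is correct and follows essentially the same route as the paper: it combines the count of Proposition \ref{prop_genus_0_poly_monomials} below the middle degree, the spanning statement of Lemma \ref{lem_genus_0_basis}, and Poincar\'e duality together with the symmetry $\mathcal{S}_{0,n}(2n-6-d)=\mathcal{S}_{0,n}(d)$ to force $\mathcal{S}_{0,n}$ to be a basis. The only difference is presentational: you make explicit, via the shift $b\mapsto b-r-1$, the bijection underlying that symmetry, which the paper merely asserts is easy to check.
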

\begin{proof}
We saw in \S\ref{sec_betti_numbers} that the Poincar\'e polynomial of $\mathcal{R}_{0,n}$ agreed with that of the graded algebra $\CC[\alpha,\beta,\delta_1,\ldots,\delta_n]/(\delta_i^2)$ up to the middle dimension. The algebra
$$\CC[\alpha,\beta,\delta_1,\ldots,\delta_n]/(\delta_i^2-\beta)$$
has the same Poincar\'e polynomial. Hence, there can be no relations other than $\delta_i^2-\beta$ in $H^*(\mathcal{R}_{0,n})$ below degree $2m$, which is the degree of $R_{0,n}^J$. Now, by Lemma \ref{lem_genus_0_basis} the relations imply that $\mathcal{S}_{0,n}$ as above contains a basis. But it is a simple matter to check that the if $\mathcal{S}_{0,n}(d)$ denotes the number of monomials of degree $d$, then $\mathcal{S}_{0,n}(d)$ is the same as the the dimension of the degree $d$ part of $\CC[\alpha,\beta,\delta_1,\ldots,\delta_n]/(\delta_i^2-\beta)$ up to the middle dimension. Moreover, it is easy to check that $\mathcal{S}_{0,n}(2n-6-d)= \mathcal{S}_{0,n}(d)$, and so by Poincar\'e duality, we must have that $\mathcal{S}_{0,n}$ actually is a basis, and so we have a complete set of relations.
\end{proof}

We have therefore proved Theorem \ref{thm_cohom_ring_genus_0}.
\appendix
\section[Appendix]{Euler Numbers, Orthogonal Polynomials, and Continued Fractions} \label{sec_appendix}

We owe the reader a discussion of how to arrive at the recursive relation (\ref{genus_0_recurrence}) for the relations in the cohomology ring, given that the top pairings of the generators $\alpha$ and $\beta$ are the Euler numbers $E_n$. This requires a brief digression on orthogonal polynomials, and an analysis of the ordinary generating function for the numbers $E_n$.

\vspace{12pt}

\noindent\textbf{Orthogonal Polynomials.} We begin by supposing we have a measure $\mu$ on the interval $[a,b]\subset\RR$, which we suppose for simplicity is given by integrating against a continuous, nonnegative weighting function $w(x)$:
$$\int_a^b f(x) d\mu=\int_a^b f(x)w(x)dx.$$
This measure provides a linear functional $\mathcal{L}_\mu$, as well as an inner product and norm on the set of $\mu$-integrable functions on $[a,b]$:
\begin{align*}
\mathcal{L}_\mu(f):=&\;\int_a^b f(x) d\mu \\
\pair{f}{g}_\mu:=&\;\int_a^bf(x)g(x)d\mu \\
\norm{f}_\mu:=&\;\left(\pair{f}{f}_\mu\right)^{\tfrac{1}{2}}
\end{align*}

\begin{definition} \label{def_moments}
The sequence of numbers
$$c_n=\mathcal{L}(x^n)=\int_a^b x^nd\mu$$
is called the sequence of \emph{moments} for the measure $\mu$.
\end{definition}

Given $\mu$, a useful collection of data is its sequence of orthogonal polynomials, which provide a convenient basis for the set of integrable functions on $[a,b]$.

\begin{definition}
A sequence of \emph{monic orthogonal polynomials} for the measure $\mu$ is a sequence of polynomials $\{p_n(x)\}$ satisfying
\begin{enumerate}[(i)]
\item $p_n$ is monic and $\deg(p_n)=n$
\item $\pair{p_n}{p_m}_\mu=0$ for $n\neq m$ and $\norm{p_n}_\mu>0$ for all $n$.
\end{enumerate}
\end{definition}

It is very easy to prove:

\begin{lemma} \label{lem_ortho_poly}
Given a weighting function $w(x)$ and associated measure $\mu$, a sequence of monic orthogonal polynomials $\{p_n(x)\}$ exists and is unique.
\end{lemma}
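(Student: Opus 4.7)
The plan is to prove existence by the Gram--Schmidt procedure applied to the monomial basis $\{1,x,x^2,\ldots\}$ of the polynomial ring, viewed as a subspace of the pre-Hilbert space of $\mu$-integrable functions, and to deduce uniqueness from a straightforward linear-algebra argument exploiting the fact that the resulting sequence is a basis respecting the degree filtration.

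For existence, I would set $p_0(x)=1$ and define inductively
$$p_n(x) = x^n - \sum_{k=0}^{n-1}\frac{\pair{x^n}{p_k}_\mu}{\pair{p_k}{p_k}_\mu}\, p_k(x).$$
Assuming by induction that $p_0,\ldots,p_{n-1}$ are monic of the correct degrees and pairwise orthogonal, $p_n$ is automatically monic of degree $n$ (the subtracted term has degree strictly less than $n$) and orthogonal to each $p_k$ with $k<n$ by the very choice of coefficients. For uniqueness, given a second such sequence $\{q_n\}$, I would use that $\{p_0,\ldots,p_n\}$ is a basis of the space of polynomials of degree at most $n$ to expand $q_n = p_n + \sum_{k<n} c_k p_k$, where the leading coefficient $1$ comes from both being monic of degree $n$. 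Orthogonality of $q_n$ to every polynomial of degree $<n$ (equivalent to orthogonality to each of $q_0,\ldots,q_{n-1}$, since those span the same subspace) then gives $\pair{q_n}{p_k}_\mu = c_k\, \norm{p_k}_\mu^2 = 0$, forcing $c_k=0$ and so $q_n = p_n$.

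The only point requiring care, and the main obstacle in a strict sense, is verifying $\norm{p_n}_\mu>0$ for every $n$, which is needed both to make the recursion well-defined and to invert $\norm{p_k}_\mu^2$ in the uniqueness step. Since $p_n$ is a nonzero monic polynomial, its zero set in $[a,b]$ is finite, so $p_n(x)^2 w(x)$ is continuous and nonnegative on $[a,b]$ and strictly positive on some open subinterval wherever $w$ is strictly positive; hence $\norm{p_n}_\mu^2 > 0$ as long as $w$ is not identically zero on $[a,b]$. This mild nondegeneracy is implicit in calling $w$ a weighting function and is satisfied in the application to the moment sequence $\{|E_{2k}|\}$ that motivates the lemma, so the Gram--Schmidt construction runs without obstruction. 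Beyond this check, the proof is the standard Gram--Schmidt argument and should occupy only a few lines when written out in full.
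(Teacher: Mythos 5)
Your proof is correct; the paper itself offers no argument for this lemma (it is simply asserted to be ``very easy to prove''), and the Gram--Schmidt construction on the monomial basis together with the degree-filtration uniqueness argument is precisely the standard proof being invoked. Your care in checking $\norm{p_n}_\mu>0$ via the continuity and nonvanishing of $w$ is the one genuinely substantive point, and you handle it correctly.
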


We will thus refer to \emph{the} sequence of monic orthogonal polynomials for a given measure $\mu$. The theory of orthogonal polynomials is old and well-understood. One has the following famous result, known as the three-term recurrence:

\begin{thm} \label{thm_three_term}
(\cite{barry_ortho}, Theorem 5) For the measure $\mu$, the monic orthogonal polynomials $p_n(x)$ satisfy the following recurrence relation:
\begin{equation} \label{three_term}
p_{n+1}(x)=(x-\alpha_n)p_n(x)-\beta_np_{n-1}(x)
\end{equation}
where we initialize $p_0(x)=1$, $p_{-1}(x)=0$, and where $\alpha_n$ and $\beta_n$ are constants depending on $\mu$.
\end{thm}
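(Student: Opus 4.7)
The plan is to use the standard argument for the three-term recurrence, which proceeds by expanding $xp_n(x)$ in the basis of monic orthogonal polynomials and exploiting orthogonality to kill most of the coefficients.

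First I would observe that $xp_n(x)$ is a polynomial of degree $n+1$, and that $\{p_0, p_1, \ldots, p_{n+1}\}$ forms a basis for the space of polynomials of degree at most $n+1$ (since $\deg p_k = k$ and they are linearly independent by orthogonality). So there exist unique constants $c_{n,k}$ with
\[
xp_n(x) = \sum_{k=0}^{n+1} c_{n,k}\, p_k(x).
\]
Since both $xp_n$ and $p_{n+1}$ are monic of degree $n+1$, and every other $p_k$ has strictly smaller degree, the coefficient $c_{n,n+1}$ must equal $1$.

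Next I would pair both sides with $p_j$ for $j \leq n-2$. Using self-adjointness of multiplication by $x$ with respect to $\pair{\cdot}{\cdot}_\mu$, we have
\[
\pair{xp_n}{p_j}_\mu = \pair{p_n}{xp_j}_\mu.
\]
But $xp_j$ has degree $j+1 \leq n-1 < n$, so by orthogonality (using that $p_n$ is orthogonal to every polynomial of degree less than $n$), this pairing vanishes. Since $\norm{p_j}_\mu > 0$, we conclude $c_{n,j} = 0$ for all $j \leq n-2$. Therefore only three terms survive:
\[
xp_n(x) = p_{n+1}(x) + c_{n,n}\, p_n(x) + c_{n,n-1}\, p_{n-1}(x),
\]
which rearranges to $p_{n+1}(x) = (x - \alpha_n) p_n(x) - \beta_n p_{n-1}(x)$ with $\alpha_n = c_{n,n}$ and $\beta_n = -c_{n,n-1}$. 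Explicit formulas are $\alpha_n = \pair{xp_n}{p_n}_\mu / \pair{p_n}{p_n}_\mu$ and $\beta_n = -\pair{xp_n}{p_{n-1}}_\mu / \pair{p_{n-1}}{p_{n-1}}_\mu$, with the initial conventions $p_0 = 1$ and $p_{-1} = 0$ handling the base cases.

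There is no real obstacle here; every step is direct once one recognizes that the only obstruction to collapsing the expansion is orthogonality, and that self-adjointness of multiplication by $x$ is what makes the orthogonality of $p_n$ against lower-degree polynomials interact correctly with the product $xp_n$. The only mild subtlety is verifying that the resulting $\beta_n$ is indeed well-defined (which requires $\norm{p_{n-1}}_\mu > 0$, guaranteed by Lemma \ref{lem_ortho_poly}), and recording the explicit expressions for $\alpha_n$ and $\beta_n$ in terms of the measure $\mu$.
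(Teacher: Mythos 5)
The paper does not actually prove this statement; it is quoted from \cite{barry_ortho}. Your argument is the standard one and is essentially correct: expanding $xp_n$ in the basis $\{p_0,\ldots,p_{n+1}\}$, matching leading coefficients to get $c_{n,n+1}=1$, and using $\pair{xp_n}{p_j}_\mu=\pair{p_n}{xp_j}_\mu=0$ for $j\leq n-2$ (since $\deg(xp_j)<n$ and $p_n$ is orthogonal to all lower-degree polynomials) is exactly the right mechanism, and the self-adjointness of multiplication by $x$ is correctly identified as the key point.

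The one flaw is a sign slip in the final bookkeeping. From $xp_n=p_{n+1}+c_{n,n}p_n+c_{n,n-1}p_{n-1}$ you get $p_{n+1}=(x-c_{n,n})p_n-c_{n,n-1}p_{n-1}$, so the correct identification is $\beta_n=+c_{n,n-1}$, not $-c_{n,n-1}$. With the sign fixed, $\beta_n=\pair{xp_n}{p_{n-1}}_\mu/\norm{p_{n-1}}_\mu^2=\pair{p_n}{xp_{n-1}}_\mu/\norm{p_{n-1}}_\mu^2=\norm{p_n}_\mu^2/\norm{p_{n-1}}_\mu^2>0$, where the middle step uses that $xp_{n-1}$ is monic of degree $n$ so only its $p_n$-component survives the pairing. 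This agrees (up to the missing squares, which are a typo in the paper) with the formula for $\beta_n$ displayed after the theorem, and the positivity of $\beta_n$ is not cosmetic: it is what the Hankel-matrix and continued-fraction machinery in the appendix relies on. As written, your version would output $\beta_n<0$ and the recurrence $p_{n+1}=(x-\alpha_n)p_n+c_{n,n-1}p_{n-1}$, which is not the stated identity.
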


In fact, the numbers $\alpha_n$ and $\beta_n$ can be computed as follows:
\begin{equation*}
\alpha_n=\frac{\pair{xp_n(x)}{p_n(x)}_\mu}{\norm{p_n(x)}_\mu}, \hspace{12pt} \beta_n=\frac{\norm{p_n(x)}_\mu}{\norm{p_{n-1}(x)}_\mu}
\end{equation*}

There is a fascinating relationship between the collection of moments $\{c_n\}$ and the coefficients $\alpha_n$ and $\beta_n$.

\begin{thm} \label{thm_cont_frac}
Let $F_\mu(x)$ be the ordinary generating function for the moments $c_n=\mathcal{L}_\mu(x^n)$:
$$F_\mu(x)=\sum_{n=0}^\oo c_nx^n$$
Then the coefficients $\alpha_n$ and $\beta_n$ from (\ref{three_term}) and $F_\mu(x)$ satisfy the continued fraction identity:
\begin{equation} \label{cont_frac}
F_\mu(x)=	\cfrac{c_0}{1-\alpha_0x-
			\cfrac{\beta_1x^2}{1-\alpha_1x-
			\cfrac{\beta_2x^2}{1-\alpha_2x-
			\cfrac{\beta_3x^2}{1-\alpha_3x-\cdots}}}}
\end{equation}
\end{thm}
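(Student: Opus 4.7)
The plan is to identify the convergents of the continued fraction with ratios of the orthogonal polynomials $p_n$ and an auxiliary sequence of \emph{associated polynomials}, and to use the orthogonality of the $p_n$ to show that these ratios approximate $F_\mu(x)$ to successively higher order as formal power series.

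First, I would introduce the associated polynomials
$$q_n(z) = \mathcal{L}_\mu\!\left(\frac{p_n(z) - p_n(y)}{z - y}\right),$$
where the functional $\mathcal{L}_\mu$ acts on the dummy variable $y$. Plugging the three-term recurrence directly into this definition shows that $q_n$ satisfies the \emph{same} recurrence as $p_n$ but with the initial data $q_0 = 0$, $q_1 = c_0$. I would then pass to the Stieltjes transform $S(z) := \mathcal{L}_\mu(1/(z-y)) = \sum_{n\geq 0} c_n z^{-n-1}$, which is related to our generating function by $F_\mu(x) = x^{-1} S(x^{-1})$ at the level of formal series. A direct computation using the definition of $q_n$ yields
$$p_n(z)\, S(z) - q_n(z) = \mathcal{L}_\mu\!\left(\frac{p_n(y)}{z - y}\right),$$
and expanding $1/(z-y)$ as a formal geometric series in $y/z$ and invoking the orthogonality $\mathcal{L}_\mu(p_n(y)\, y^k) = 0$ for $k < n$, the right-hand side is $O(z^{-n-1})$ at infinity, so $S(z) - q_n(z)/p_n(z) = O(z^{-2n-1})$.

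Second, I would identify the $n$-th convergent $P_n(x)/Q_n(x)$ of the continued fraction with the reciprocal polynomial ratio $q_n(1/x)/(x\, p_n(1/x))$. The convergents satisfy the standard recurrence
$$P_n = (1-\alpha_{n-1} x) P_{n-1} - \beta_{n-1} x^2 P_{n-2}, \qquad Q_n = (1-\alpha_{n-1} x) Q_{n-1} - \beta_{n-1} x^2 Q_{n-2},$$
with $P_0 = 0$, $P_1 = c_0$, $Q_0 = 1$, $Q_1 = 1-\alpha_0 x$. Multiplying the three-term recurrence for $p_n$ by $x^n$ and substituting $z = 1/x$ produces exactly this recurrence for the reciprocal polynomials $\tilde p_n(x) := x^n p_n(1/x)$; similarly $x^{n-1} q_n(1/x)$ satisfies the recurrence with the correct initial data. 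Hence $P_n/Q_n = q_n(1/x)/(x\, p_n(1/x))$, and the formal approximation order from the first paragraph yields $F_\mu(x) - P_n(x)/Q_n(x) = O(x^{2n})$, so the continued fraction converges in the formal power series topology to $F_\mu(x)$.

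The principal obstacle will be the careful bookkeeping needed to align the recurrences for the continued-fraction convergents with those for the pair $(p_n, q_n)$ under the reciprocal substitution $z \leftrightarrow 1/x$, and to track the initial conditions so that the constant $c_0$ emerges correctly as the leading partial numerator. Once this identification is in place, the rest of the proof is purely formal and avoids any analytic convergence hypotheses, so the identity applies verbatim to the application (the deduction of the recurrence in Proposition \ref{prop_genus_0_first_rel}), where one reverses the logic to extract $\alpha_n$ and $\beta_n$ from a known continued fraction expansion of the generating function of the Euler numbers.
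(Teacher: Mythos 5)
Your proposal is correct, and it supplies something the paper does not: the paper states Theorem~\ref{thm_cont_frac} as a quoted classical fact (in the same spirit as its citation of \cite{barry_ortho} for the three-term recurrence) and gives no proof at all, so there is no ``paper's argument'' to compare against. What you have written is the standard Stieltjes/Jacobi-fraction proof via the associated polynomials of the second kind, and every step checks: $q_n$ has degree $n-1$, and substituting the three-term recurrence into its definition produces the same recurrence plus the extra term $\mathcal{L}_\mu(p_n(y))$, which vanishes for $n\geq 1$ by orthogonality of $p_n$ against $p_0$ --- you should make that cancellation explicit, since it is precisely where orthogonality enters the recurrence for $q_n$. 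The identity $p_n(z)S(z)-q_n(z)=\mathcal{L}_\mu\bigl(p_n(y)/(z-y)\bigr)$ together with $\mathcal{L}_\mu(p_n(y)y^k)=0$ for $k<n$ and the monicity of $p_n$ gives $S(z)-q_n(z)/p_n(z)=O(z^{-2n-1})$, and your bookkeeping of the convergent recurrences under $z\mapsto 1/x$ is right: $Q_n(x)=x^np_n(1/x)$ and $P_n(x)=x^{n-1}q_n(1/x)$ match both the recurrence $P_n=(1-\alpha_{n-1}x)P_{n-1}-\beta_{n-1}x^2P_{n-2}$ and the initial data $P_0=0$, $P_1=c_0$, $Q_0=1$, $Q_1=1-\alpha_0x$, whence $F_\mu(x)-P_n/Q_n=O(x^{2n})$ and the formal convergence follows since each $Q_n$ has constant term $1$. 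Your closing remark is also apt for the application: in Theorem~\ref{thm_recur_rel} the logic is run in reverse, reading off $\alpha_m=0$ and $\beta_m=m^2$ from the known expansion (\ref{euler_gen_func}), and for that one needs exactly the uniqueness of the coefficients in a J-fraction with $\beta_n\neq 0$, which your order-of-contact estimate delivers.
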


\vspace{12pt}

\noindent\textbf{The Euler Numbers.} The numbers $E_n$ are defined via:
\begin{equation} \label{euler_numbers}
\sech(z)=\frac{1}{\cosh(z)}=\sum_{n=0}^\oo\frac{E_{n}}{n!}z^{n}.
\end{equation}
Since $\sech(z)$ is an even function, we have $E_n=0$ for $n$ odd. The sequence has first few terms (beginning with the 0th term):
$$\{E_n\}=1,0,-1,0,5,0,-61,0,1385,\ldots$$
where the nonzero entries are alternatively positive and negative. It is shown in \cite{flajolet_cont_frac} that the ordinary generating function for the sequence $\{\abs{E_{2n}}\}$ of absolute values of nonzero Euler numbers has the following remarkable continued fraction expansion:
\begin{equation} \label{euler_gen_func}
E(z)=\sum_{n=0}^\oo \abs{E_{2n}}z^{2n}= \cfrac{1}{1-
										\cfrac{1^2z^2}{1-
										\cfrac{2^2z^2}{1-
										\cfrac{3^2z^2}{1-\ldots}}}}
\end{equation}
Here we must use the absolute value sign on the Euler numbers to agree with the conventions of \cite{flajolet_cont_frac}.

We now tackle our main problem, which is understanding the relations in the cohomology ring of the moduli space $\mathcal{R}_{0,2m+1}$. We have already shown that there is a relation $r_{0,2m+1}(\alpha,\beta)$ in the generators $\alpha$ and $\beta$ which is of degree $m$ in $\alpha$ and $\floor{m/2}$ in $\beta$. Set $e=m-2\floor{m/2}$, the remainder 0,1 of $m$ when divided by 2. As in the proof of Proposition \ref{prop_genus_0_first_rel}, if we write
$$r_{0,2m+1}(\alpha,\beta)=A_m\alpha^m+A_{m-2}\alpha^{m-2}\beta+\ldots+A_e\alpha^e\beta^{\floor{m/2}}$$
then the $(\floor{m/2}+1)$-vector $(A_m,A_{m-2},\ldots,A_e)$ is in the kernel of the matrix $E_{ij}$ for $i=0,\ldots,\floor{m/2}-1$ and $j=0,\ldots,\floor{m/2}$ with
\begin{equation} \label{euler_mat}
E_{ij}=\abs{E_{2e+2i+2j}}.
\end{equation}
Let us take this as a definition of $r_{0,2m+1}(\alpha,\beta)$, under the additional constraint that we take $r_{0,2m+1}(\alpha,\beta)$ to be monic in $\alpha$.

\begin{thm} \label{thm_recur_rel}
The polynomials $r_{0,2m+1}(\alpha,\beta)$ satisfy the recurrence (\ref{genus_0_recurrence}).
\end{thm}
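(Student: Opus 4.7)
The plan is to reinterpret the polynomials $r_{0,2m+1}$ as monic orthogonal polynomials for a symmetric linear functional built from the Euler numbers, and then read off the recurrence from the three-term recurrence of Theorem \ref{thm_three_term} applied to the continued fraction (\ref{euler_gen_func}). Set $q_m(x) := r_{0,2m+1}(x,1)$; since every monomial in $r_{0,2m+1}(\alpha,\beta)$ has the form $\alpha^{m-2k}\beta^k$, $q_m$ is a monic polynomial of degree $m$ whose nonzero terms all have the same parity as $m$. Define $\mathcal{L}$ on $\CC[x]$ by $\mathcal{L}(x^n) = |E_n|$: odd values vanish since $E_n = 0$ for odd $n$, and the ordinary generating function $\sum_n \mathcal{L}(x^n)\,z^n$ is precisely $E(z)$ from (\ref{euler_gen_func}).

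First I would verify that $q_m$ is the $m$-th monic orthogonal polynomial for $\mathcal{L}$, i.e.\ that $\mathcal{L}(x^j q_m) = 0$ for $j = 0, \ldots, m-1$. When $j$ and $m$ have opposite parity this is automatic since $x^j q_m$ is then odd. When they have the same parity, write $e = m \bmod 2$ and $q_m(x) = x^e\,\tilde q(x^2)$ with $\tilde q(y) = \sum_{s=0}^{\lfloor m/2\rfloor} A_{e+2s}\,y^s$; the substitution $y = x^2$ shows that the required vanishing reduces to
\begin{equation*}
\sum_{s=0}^{\lfloor m/2\rfloor} A_{e+2s}\,|E_{2e+2i+2s}| = 0 \qquad \text{for } i = 0, \ldots, \lfloor m/2 \rfloor - 1,
\end{equation*}
which is exactly the defining kernel condition (\ref{euler_mat}) on the coefficients of $r_{0,2m+1}$. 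Hence $q_m$ is the unique monic orthogonal polynomial of degree $m$ for $\mathcal{L}$.

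Theorem \ref{thm_three_term} then furnishes constants $\alpha_m, \beta_m$ with $q_{m+1}(x) = (x-\alpha_m)q_m(x) - \beta_m q_{m-1}(x)$, and Theorem \ref{thm_cont_frac} identifies these with the continued-fraction coefficients of $E(z)$. Comparing term by term with (\ref{euler_gen_func}) forces $\alpha_m = 0$ and $\beta_m = m^2$ for every $m \ge 0$, so
\begin{equation*}
q_{m+1}(x) = x\,q_m(x) - m^2\,q_{m-1}(x),
\end{equation*}
with initial data $q_0 = 1$ and $q_1 = x$ matching $r_{0,1} = 1$ and $r_{0,3} = \alpha$. Finally, since both sides of the desired identity $r_{0,2m+3} = \alpha\cdot r_{0,2m+1} - m^2\beta\cdot r_{0,2m-1}$ are polynomials in $\alpha, \beta$ of weighted degree $m+1$ (with $\deg\alpha = 1$, $\deg\beta = 2$), it suffices to verify it after the specialization $\beta = 1$, where it becomes the established recurrence for the $q_m$. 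The main bookkeeping obstacle is in paragraph two---carefully matching the two natural orderings of the coefficient vector---after which the argument is a direct application of the classical orthogonal polynomial machinery summarized in this appendix.
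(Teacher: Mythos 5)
Your proof is correct and follows essentially the same route as the paper's: de-homogenize to $q_m(x)=r_{0,2m+1}(x,1)$, recognize the kernel condition (\ref{euler_mat}) as orthogonality with respect to the functional with moments $\abs{E_n}$, and read off $\alpha_m=0$, $\beta_m=m^2$ from the continued fraction (\ref{euler_gen_func}) via Theorems \ref{thm_three_term} and \ref{thm_cont_frac}. (Your value $\beta_m=m^2$ is in fact the correct sign --- the paper's printed $\beta_m=-m^2$ is a typo, since the three-term coefficients $\beta_m$ are ratios of norms and hence positive --- and both versions lead to the same recurrence $q_{m+1}=xq_m-m^2q_{m-1}$.)
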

\begin{proof}
Suppose that $\mu$ is a measure on an interval $[a,b]$ whose moments are given by
$$\mathcal{L}_\mu(x^n)=\abs{E_n}.$$
Define the de-homogenized single-variable polynomial:
\begin{align*}
s_{m}(x):=&\;r_{0,2m+1}(x,1).
\end{align*}
Evaluating the matrix $E_{ij}$ on the $(\floor{m/2}+1)$-vector $(A_m,A_{m-2},\ldots,A_e)$ gives the $\floor{m/2}$-vector
$$\left(\pair{x^e}{s_{m}(x)}_\mu,\pair{x^{e+2}}{s_{m}(x)}_\mu,\ldots,\pair{x^{m-2}}{s_{m}(x)}_\mu\right),$$
which we assume vanishes. Hence, the pairing of $s_m(x)$ (which is degree $m$ in $x$) with any polynomial in $x$ of lower degree having only terms with the same degree parity is zero. Of course, pairing with monomials with the opposite parity gives 0 as well. This implies $s_m(x)$ is orthogonal to $s_k(x)$ for $k<m$, and so by induction the polynomial sequence $\{s_m(x)\}$ is monic orthogonal for $\mu$. By Theorem \ref{thm_cont_frac}, we see that $s_m(x)$ satisfies the recurrence
$$s_{m+1}(x)=(x-\alpha_m)s_m(x)-\beta_ms_{m-1}(x)$$
where $\alpha_m$ and $\beta_m$ are the coeffiecients in the continued fraction \ref{cont_frac}. But by definition, the generating function to use is given by $E(z)$ as in (\ref{euler_gen_func}), and so we see that $\alpha_m=0$ and $\beta_m=-m^2$. Hence:
$$s_{m+1}(x)=xs_m(x)-m^2s_{m-1}(x),$$
and since $r_{0,2m+1}(\alpha,\beta)$ is homogeneous in $\alpha$ and $\beta$ (with these variables assigned degrees 2 and 4, respectively), we obtain the recursion (\ref{genus_0_recurrence}).
\end{proof}

\bibliography{thesis_bib}{}

\begin{thebibliography}{10}

\bibitem{AB}
Michael~F Atiyah and Raul Bott.
\newblock {The Yang-Mills equations over Riemann surfaces}.
\newblock {\em Phil. Trans. R. Soc. Lond. A}, 308:523--615, 1982.

\bibitem{barry_ortho}
Paul Barry.
\newblock {Riordan arrays, orthogonal polynomials as moments, and Hankel
  transforms}.
\newblock {\em Journal of Integer Sequences}, 14, 2011.

\bibitem{biswas_cohom}
Indranil Biswas and N~Raghavendra.
\newblock {Canonical generators of the cohomology of moduli of parabolic
  bundles on curves}.
\newblock {\em Mathematische Annalen}, 306(1):1--14, 1996.

\bibitem{boden_parabolic}
Hans~U Boden.
\newblock {Representations of orbifold groups and parabolic bundles .}
\newblock {\em Commentarii Mathematici Helvetici}, 66:389--447, 1991.

\bibitem{boden_variations}
Hans~U Boden and Yi~Hu.
\newblock {Variations of moduli of parabolic bundles}.
\newblock {\em Mathematische Annalen}, 301(3):539--559, 1995.

\bibitem{flajolet_cont_frac}
Philippe Flajolet.
\newblock {Combinatorial aspects of continued fractions}.
\newblock {\em Discrete Mathematics}, 32:125--161, 1980.

\bibitem{weitsman_toric_I}
Lisa~C Jeffrey and Jonathan Weitsman.
\newblock {Toric structures on the moduli space of flat connections on a
  Riemann surface: volumes and the moment map}.
\newblock {\em Advances in Mathematics}, 106:151--168, 1994.

\bibitem{king_newstead}
A~D King and P~E Newstead.
\newblock {On the cohomology ring of the moduli space of rank 2 vector bundles
  on a curve}.
\newblock {\em Topology}, 37(2):407--418, March 1998.

\bibitem{kirk_klassen}
Paul~A Kirk and Eric~P Klassen.
\newblock {Representation spaces of Seifert fibered homology spheres}.
\newblock {\em Topology}, 30(1):77--95, 1991.

\bibitem{mehta_seshadri}
V.~B. Mehta and C.~S. Seshadri.
\newblock {Moduli of vector bundles on curves with parabolic structures}.
\newblock {\em Mathematische Annalen}, 248(3):205--239, October 1980.

\bibitem{munoz_cohom}
Vicente Mu\~{n}oz.
\newblock {Ring structure of the floer cohomology of $\Sigma$ x S\^{}1}.
\newblock {\em Topology}, 38(3):517--528, May 1999.

\bibitem{nitsure_cohom}
Nitin Nitsure.
\newblock {Cohomology of the moduli of parabolic vector bundles}.
\newblock {\em Proc. Indian Acad. Sci. (Math Sci.)}, 95(1):61--77, 1986.

\bibitem{siebert_tian}
Bernd Siebert and Gang Tian.
\newblock {Recursive relations for the cohomology ring of moduli spaces of
  stable bundles}.
\newblock {\em Tr. J. of Math.}, 19:131--144, 1995.

\bibitem{street_floer}
Ethan Street.
\newblock {Ring structure on the Floer homology of a product link in S\^{}1 x
  S\^{}2}.
\newblock {\em In Preparation}.

\bibitem{thaddeus_conformal}
Michael Thaddeus.
\newblock {Conformal field theory and the cohomology of the moduli space of
  stable bundles}.
\newblock {\em J. Differential Geometry}, 35:131--149, 1992.

\bibitem{weitsman_cohom}
Jonathan Weitsman.
\newblock {Geometry of the intersection ring of the moduli space of flat
  connections and the conjectures of Newstead and Witten}.
\newblock {\em Topology}, 37(1):115--132, 1998.

\end{thebibliography}
\bibliographystyle{plain}

\end{document}